\newtheorem{prop}{Proposition}[section]
\newtheorem{thm}[prop]{Theorem}
\newtheorem{lem}[prop]{Lemma}
\newtheorem{ass}[prop]{Assumption}
\theoremstyle{definition}
\newtheorem{defn}[prop]{Definition}
\newtheorem{rem}[prop]{Remark}
\newtheorem*{ack}{Acknowledgements}
\def\co{\colon\thinspace}
\newcommand{\oalpha}{\overline{\alpha}}
\newcommand{\halpha}{\hat{\alpha}}
\newcommand{\C}{\mathbb{C}}
\newcommand{\rmd}{\mathrm{d}}
\newcommand{\rme}{\mathrm{e}}
\newcommand{\bfe}{\mathbf{e}}
\newcommand{\tf}{\tilde{f}}
\newcommand{\tg}{\tilde{g}}
\newcommand{\rmi}{\mathrm{i}}
\newcommand{\tk}{\tilde{k}}
\newcommand{\muCZ}{\mu_{\mathrm{CZ}}}
\newcommand{\N}{\mathbb{N}}
\newcommand{\sfp}{\mathsf{p}}
\newcommand{\frakp}{\mathfrak{p}}
\newcommand{\tPhi}{\widetilde{\Phi}}
\newcommand{\Q}{\mathbb{Q}}
\newcommand{\R}{\mathbb{R}}
\newcommand{\frakR}{\mathfrak{R}}
\newcommand{\ttsl}{\mathtt{sl}}
\newcommand{\tu}{\tilde{u}}
\newcommand{\Veps}{V_{\varepsilon}}
\newcommand{\tv}{\tilde{v}}
\newcommand{\Z}{\mathbb{Z}}
\DeclareMathOperator{\Int}{Int}
\DeclareMathOperator{\Diffc}{Diff_{\mathrm{c}}}
\DeclareMathOperator{\Omegac}{\Omega_{\mathrm{c}}}
\begin{document}

\author[P.~Albers]{Peter Albers}
\address{Mathematisches Institut, Universit\"at Heidelberg,
Im Neuenheimer Feld 205, 69120 Heidelberg, Germany}
\email{palbers@mathi.uni-heidelberg.de}

\author[H.~Geiges]{Hansj\"org Geiges}
\address{Mathematisches Institut, Universit\"at zu K\"oln,
Weyertal 86--90, 50931 K\"oln, Germany}
\email{geiges@math.uni-koeln.de}

\author[K.~Zehmisch]{Kai Zehmisch}
\address{Mathematisches Institut, Universit\"at Gie{\ss}en, Arndtstra{\ss}e 2,
35392 Gie{\ss}en, Germany}
\email{kai.zehmisch@math.uni-giessen.de}

\title[Pseudorotations and Reeb flows]{Pseudorotations of the $2$-disc
and\\Reeb flows on the $3$-sphere}

\date{}

\begin{abstract}
We use Lerman's contact cut construction to find a sufficient condition
for Hamiltonian diffeomorphisms of compact surfaces to embed
into a closed $3$-manifold as
Poincar\'e return maps on a global surface of section for a Reeb flow.
In particular, we show that the irrational pseudorotations of the $2$-disc
constructed by Fayad--Katok embed into the Reeb flow of a dynamically
convex contact form on the $3$-sphere.
\end{abstract}

\keywords{pseudorotation, Poincar\'e return map, global surface of
section, Reeb flow, contact cut, open book decomposition,
area-preserving diffeomorphisms of the disc}

\subjclass[2010]{37J05; 37J55, 53D35}

\maketitle


\section{Introduction}
A \emph{global surface of section} for the flow of a smooth
non-singular vector field $X$ on a closed
$3$-dimensional manifold $M$ is an embedded compact surface
$\Sigma\subset M$ with the following properties:
\begin{itemize}
\item[(i)] Each component of the boundary $\partial\Sigma$
is a  periodic orbit of~$X$.
\item[(ii)] The interior $\Int(\Sigma)$ is transverse to~$X$,
and the orbit of $X$ through any point in $M\setminus\partial\Sigma$
intersects $\Int(\Sigma)$ in forward and backward time.
\end{itemize}
The Poincar\'e return map $\psi\co\Int(\Sigma)\rightarrow\Int(\Sigma)$
sends a point $p\in\Int(\Sigma)$ to the first intersection point in forward
time of the flow line of $X$ through~$p$. In general, $\psi$ need not extend
smoothly to a diffeomorphism of~$\Sigma$; if the
return time of the flow goes to infinity as one approaches $\partial\Sigma$,
the rescaled vector field with return time $2\pi$, say, will blow up
near~$\partial\Sigma$.

Global surfaces of section were introduced by Poincar\'e in the
context of celestial mechanics, allowing him to reduce the search for
periodic orbits in the $3$-body problem to finding periodic points
of the return map. The most celebrated instance of this
approach is Poincar\'e's last geometric theorem on
area-preserving twist maps of the annulus, as proved by
Birkhoff, see~\cite[Section~8.2]{mcsa17}. Hofer, Wysocki and
Zehnder~\cite{hwz98} developed holomorphic curves techniques for finding
global surfaces of section for Reeb flows, and they established the
existence of those surfaces for Hamiltonian flows on strictly convex energy
hypersurfaces in~$\R^4$. (In this context, there is an area form
on the surface of section preserved by the return map.)
This has provided fresh impetus for the study of the $3$-body problem;
see~\cite{affhk12,afkp12,schn18} for recent applications
of such global symplectic methods to this problem.

In this paper we study what in some sense is the
dual or converse problem. Our goal is to realise certain
Hamiltonian diffeomorphisms of compact surfaces with boundary
as the return map of a Reeb flow on a closed $3$-manifold.
Specifically, we are interested in achieving this
for the irrational pseudorotations constructed by Fayad--Katok~\cite{faka04}.

\begin{defn}
An \emph{irrational pseudorotation} is a diffeomorphism $\psi$ of $D^2$
with the following properties:
\begin{itemize}
\item[(i)] $\psi$ is area-preserving for the standard area form of~$D^2$.
\item[(ii)] $\psi$ has $0\in D^2$ as a fixed point, and no other
periodic points.
\end{itemize}
\end{defn}

Here is our first main result. For the definition of dynamical convexity,
see Section~\ref{subsection:muCZ}.

\begin{thm}
\label{thm:main}
Let $\psi\co D^2\rightarrow D^2$ be an irrational pseudorotation as
constructed by Fayad--Katok. Then there is a dynamically convex contact
form on the $3$-sphere $S^3$, inducing the
standard contact structure, whose Reeb flow has a disc-like surface of
section on which the return map equals~$\psi|_{\Int(D^2)}$.
\end{thm}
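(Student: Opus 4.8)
The plan is to realize the pseudorotation as a return map by building an appropriate contact form on $S^3$ whose Reeb flow has a disc-like global surface of section. The natural framework is an open book decomposition of $S^3$ with disc-like pages and binding the unknot. Recall that $S^3$ admits an open book with page $D^2$ and trivial monodromy, whose total space is the standard $S^3$; more generally, given an area-preserving diffeomorphism $\psi$ of $D^2$ that is the identity near the boundary, one can form a mapping-torus-type construction and cap it off. The first step, therefore, is to upgrade $\psi$ so that it agrees with a rigid rotation (in fact the identity up to the boundary rotation number) in a collar neighborhood of $\partial D^2$, so that the abstract open book with monodromy $\psi$ makes sense and carries a Reeb flow whose return map on the interior of a page is exactly $\psi$.

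\emph{First I would} construct, on the mapping torus of $\psi$, a contact form adapted to the open book. Concretely, on $\Int(D^2)\times\R/\!\sim$ (the suspension) one writes a contact form of the shape $\alpha = \lambda + f\,\rmd t$, where $\lambda$ is a primitive of the area form on the page and $f$ is a suitably chosen positive function controlling the return time; the Reeb vector field of such an $\alpha$ is $\partial_t$ up to reparametrization, so that the pages $D^2\times\{t\}$ are surfaces of section and the time-$1$ return map is $\psi$. The delicate point is the behavior near the binding: one must arrange that as one approaches $\partial D^2$ the return time blows up in precisely the way that, after Lerman's contact cut, glues in a closed binding orbit and completes the suspension to a smooth contact form on all of $S^3$. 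This is exactly where the contact cut construction advertised in the abstract enters: it replaces the open-book boundary $S^1\times\partial D^2$ by collapsing the $S^1$-factor along the appropriate direction, producing the closed manifold $S^3$ and a genuine Reeb periodic orbit as the binding.

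\emph{The hard part will be} the two quantitative constraints that must hold simultaneously at the binding. First, the contact cut requires the boundary $S^1$-action to close up with the correct period, which forces the rotation number of $\psi$ at $\partial D^2$ (equivalently, the asymptotic rotation/return-time data) to be compatible with the standard contact structure on $S^3$; one must verify that the Fayad--Katok pseudorotations can be normalized to meet this compatibility without destroying their dynamical properties (still area-preserving, still with $0$ as the unique periodic point). Second, and most importantly, is the \emph{dynamical convexity}: one needs every periodic Reeb orbit to have Conley--Zehnder index at least $3$. The binding orbit's index is governed by the rotation number at the boundary together with the framing introduced by the cut, while any other periodic orbit would correspond to a periodic point of $\psi$ in $\Int(D^2)$ — of which there are none by the pseudorotation property, save the fixed point $0$, whose associated orbit's index must also be checked. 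So the crux is to choose the boundary normalization and the function $f$ so that the $\muCZ$ computation (using the formalism of Section~\ref{subsection:muCZ}) yields indices $\ge 3$ for the binding and the central orbit, while the absence of interior periodic points of $\psi$ rules out all other short orbits.

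\emph{Finally I would} assemble these pieces: confirm that the resulting contact form induces the standard (tight) contact structure on $S^3$ — which follows because the open book with disc page and the contact cut reproduce the standard contact open book of $S^3$ — and that the disc page descends to the claimed disc-like global surface of section with return map $\psi|_{\Int(D^2)}$. The main obstacle I anticipate is controlling the interplay between the boundary asymptotics of $\psi$, the return-time blow-up, and the Conley--Zehnder indices all at once; making the contact cut smooth while simultaneously guaranteeing dynamical convexity is the technical heart of the argument.
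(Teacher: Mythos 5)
Your overall architecture matches the paper's: a contact form of the shape $H_s\,\rmd s+\lambda$ on the solid torus whose Reeb flow realises $\psi$ as the return map on $\{0\}\times D^2$, closed up by Lerman's contact cut along $\partial(S^1\times D^2)$, with dynamical convexity checked by computing $\muCZ$ of the binding and central orbits and invoking the absence of other periodic points. But your proposed first step --- ``upgrade $\psi$ so that it agrees with a rigid rotation in a collar neighbourhood of $\partial D^2$'' --- is exactly what cannot be done here, and this is where the real difficulty of the theorem lives. Each Fayad--Katok approximant $\varphi_{\nu}\circ\frakR_{p_{\nu}/q_{\nu}}\circ\varphi_{\nu}^{-1}$ \emph{is} a rigid rotation near the boundary, but on a collar that shrinks to $\partial D^2$ as $\nu\to\infty$; the limit $\psi$ is a rigid rotation on no open collar, only its $\infty$-jet at $\partial D^2$ agrees with that of a rotation. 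You cannot modify $\psi$ near the boundary to restore the collar condition without changing the map whose return map you are supposed to realise (and any such modification risks creating new periodic points, destroying the pseudorotation property). Consequently the boundary $S^1$-action generated by $\partial_s-h\partial_{\theta}$ does not extend to a strict contact action on a collar, and the cut cannot be performed by the off-the-shelf argument your proposal relies on.

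The paper's proof fills this gap in two steps that are absent from your outline. First, it makes a \emph{canonical} choice of generating Hamiltonian for a compactly supported area-preserving disc map (via the deformation retraction of $\Diffc(D^2,\omega)$, Theorem~\ref{thm:diff}), so that $C^{\infty}$-convergence of the diffeomorphisms $\psi_{\nu}$ yields $C^{\infty}$-convergence of the Hamiltonians $H_s^{\nu}\to H_s^{\infty}$; without this, the limit process has no handle on the contact forms. Second --- and this is the technical heart --- it proves directly (Proposition~\ref{prop:smooth} and the polar-coordinate lemmas of Section~\ref{subsection:higher}) that the pulled-back $1$-form $\Phi^*\alpha_{\infty}$ still extends smoothly as a contact form over the binding, using only that $H_s^{\infty}$ has the $\infty$-jet of the quadratic rotation Hamiltonian along $\partial D^2$. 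The same jet computation then shows the extended form agrees to infinite order along $B$ and $C$ with that of an irrational ellipsoid with $a_0=h+a$, which is what gives $\muCZ\geq 3$ for both orbits. You correctly sensed that ``making the contact cut smooth while guaranteeing dynamical convexity is the technical heart,'' but the mechanism you propose for the smoothness (normalising $\psi$ at the boundary) is not available, and no substitute is offered.
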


In particular, the Reeb flow has exactly two (simple) periodic orbits:
the boundary of the surface of section, and the one corresponding
to the fixed point $0$ of~$\psi$. By the work of Cristofaro-Gardiner and
Hutchings~\cite{cghu16}, two is the minimal number of periodic
Reeb orbits on any closed $3$-dimensional contact manifold. Also,
our construction produces a contact open book in the
sense of Giroux~\cite{giro02}, cf.~\cite[Section~4.4.2]{geig08}:
the binding is given by the boundary of the surface of section, and
the pages are the translates of this surface by the
Reeb flow, suitably reparametrised.

Given an open book on a $3$-manifold adapted to a contact structure
$\ker\alpha$, the Reeb flow preserves the area form on the
\emph{interior} of the pages
induced by $\rmd\alpha$. If the Reeb flow
is tangent to the binding (i.e.\ the common boundary of the
pages), this area form degenerates along the boundary.
So it is to be expected that we cannot work with
an embedding of a page smooth up to the boundary if we want to
realise a return map preserving the standard area form.
Indeed, our construction for proving Theorem~\ref{thm:main}
will produce a topological embedding $D^2\hookrightarrow S^3$
smooth only on the interior of the disc. This embedding differs
from a smooth embedding by a radial reparametrisation of the disc,
and the image is a smooth disc in~$S^3$.
The following definition is to be understood in the same vein.

\begin{defn}
When an area-preserving diffeomorphism $\psi\co \Sigma\rightarrow\Sigma$
can be realised, on $\Int(\Sigma)$, as the
Poincar\'e return map on a global surface
of section for a Reeb flow on a closed $3$-manifold~$M$, we say that $\psi$
\emph{embeds into a Reeb flow on~$M$}.
\end{defn}

\begin{rem}
Theorem~\ref{thm:main} is actually a corollary of the much more general
Theorem~\ref{thm:main-general} we are going to formulate
in Section~\ref{subsection:sufficient}.
We shall see there that any Hamiltonian diffeomorphism
$\psi\co\Sigma\rightarrow\Sigma$ embeds into a Reeb flow, subject to
a condition on the $\infty$-jet at the boundary $\partial\Sigma$ of
the Hamiltonian function generating~$\psi$.

For clarity of exposition, we proceed from the particular to the general.
That is, we first prove the embeddability of Hamiltonian diffeomorphisms
whose generating Hamiltonian is particularly well behaved
near~$\partial\Sigma$ (Proposition~\ref{prop:intro}). We then perform a
limit process to demonstrate Theorem~\ref{thm:main}. An inspection of
that proof will yield the general result alluded to above.

The condition on the $\infty$-jet of the Hamiltonian can be verified
directly, so it applies to Hamiltonian functions that do not necessarily
arise as a limit of `well-behaved' Hamiltonians, as is the case in
the Fayad--Katok examples.
\end{rem}

The pseudorotations of Fayad--Katok have precisely three
ergodic invariant measures: the Lebesgue measure on the disc, the
$\delta$-measure at the fixed point, and the Lebesgue measure
on the boundary. Thus, the Reeb flows we construct are in some sense
as exotic as possible. However, even disregarding the two periodic orbits,
the Reeb flow will not be minimal, since by the work of Le Calvez and
Yoccoz~\cite{lcyo97} there will always be other non-dense orbits.
We refer the reader to \cite[Section~3.1]{faka04} for further historical
comments. Concerning the minimality issue, see also the discussion
in~\cite{geze}.

\begin{rem}
Another construction of `exotic' Reeb flows is mentioned
in \cite[p.~200]{hwz98}. In private communication to those
authors, M.~Herman has constructed hypersurfaces in $\R^4$
that are $C^{\infty}$-close to an irrational ellipsoid and admit
precisely two periodic orbits, but have a Reeb flow with a dense orbit.

From the viewpoint of contact homology,
dynamically convex contact forms inducing the standard contact structure
on $S^3$, and whose Reeb flow has precisely two periodic orbits,
have been studied by Bourgeois--Cieliebak--Ekholm in \cite{bce07}.
They mention that in the context of their main theorem, there
is a disc-like global surface of section on which
the return map has a single fixed point and no further periodic
points, but they leave open the question whether pseudorotations
are actually realised in this way.

For a recent survey on global surfaces of section for Reeb flows
see~\cite{hrsa18}.
\end{rem}

Conversely, the embedding of the Fayad--Katok pseudorotations into a Reeb
flow on a closed manifold may pave the way to using global symplectic
methods for studying these pseudorotations. For recent applications of
pseudoholomorphic curves methods to the study of pseudorotations
see~\cite{bram15a,bram15b}.

The irrational pseudorotations of Fayad--Katok are $C^{\infty}$-limits
\[ \lim_{\nu\rightarrow\infty}\varphi_{\nu}\circ\frakR_{p_{\nu}/q_{\nu}}\circ
\varphi_{\nu}^{-1}\]
of conjugates of $2\pi$-rational rotations~$\frakR_{p_{\nu}/q_{\nu}}$,
where the conjugating maps
$\varphi_{\nu}$ are area-pre\-serv\-ing diffeomorphisms of $D^2$ that are
the identity on a small and, for $\nu\rightarrow\infty$,
shrinking neighbourhood of the
boundary $\partial D^2$. We shall describe these pseudorotations in
more detail later. In order to prove Theorem~\ref{thm:main},
we first establish the analogous statement for
area-preserving diffeomorphisms of $D^2$ that equal a rigid
rotation near the boundary. Such a result is essentially contained in
\cite{hutc16} or \cite[Section~3]{abhs18}. We present an
alternative proof that relies on the notion of contact cuts in the sense
of Lerman~\cite{lerm01}.

Contact cuts provide the natural language
for constructing contact \emph{forms} on manifolds obtained from
a manifold with boundary by collapsing the orbits of an $S^1$-action on the
boundary, allowing one to control the Reeb dynamics on such quotients.
Therefore the cut construction is ideally suited for formulating
the general condition on a Hamiltonian
diffeomorphism to embed into a Reeb flow. For a brief introduction
to contact cuts in the context of Reeb dynamics see~\cite{geig19}.

As an instructive first step towards the general result, with
this approach one easily sees how one can relax the condition that the
diffeomorphism be a rigid rotation near the boundary, as in the
following proposition.

\begin{prop}
\label{prop:intro}
Let $\psi$ be the time $2\pi$ map of a Hamiltonian isotopy of $D^2$
generated by a $2\pi$-periodic Hamiltonian function
$H_s\co D^2\rightarrow\R$, $s\in\R/2\pi\Z$. If the Hamiltonian function
is autonomous on a collar neighbourhood of $\partial D^2$ and
depends only on the radial coordinate in that neighbourhood,
then $\psi$ embeds into a Reeb flow on~$S^3$.
\end{prop}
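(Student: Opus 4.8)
The plan is to present $\psi$ as the Poincar\'e return map of the suspension of its generating Hamiltonian flow, and then to close up the resulting open mapping torus to $S^3$ by a contact cut in the sense of Lerman~\cite{lerm01}.

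First I would set up the suspension. Fix a rotationally symmetric primitive $\lambda$ of the standard area form $\omega$ on $D^2$, say $\lambda=\tfrac12 r^2\,\rmd\theta$ in polar coordinates $(r,\theta)$, and on the solid torus $D^2\times(\R/2\pi\Z)$ with $S^1$-coordinate $s$ consider the $1$-form $\alpha=\lambda+(C-H_s)\,\rmd s$. A short computation gives $\rmd\alpha=\omega-\rmd_{D^2}H_s\wedge\rmd s$, whence
\[
\alpha\wedge\rmd\alpha=\bigl((C-H_s)\,\omega-\lambda\wedge\rmd_{D^2}H_s\bigr)\wedge\rmd s,
\]
so for $C$ larger than the relevant norm of $H$ this is a volume form and $\alpha$ is contact. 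Its Reeb vector field is a positive multiple of $\partial_s+X_{H_s}$, so the Reeb trajectories are reparametrisations of the suspension of the Hamiltonian flow and the first return map to the page $D^2\times\{0\}$ is exactly the time-$2\pi$ map $\psi$.

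Next I would close up the boundary. On the collar where $H_s=H(r)$ is autonomous and radial, $\alpha$ is invariant under the $T^2$ rotating the $\theta$- and $s$-circles, so its Reeb flow there is linear and $\alpha=g_1(r)\,\rmd\theta+g_2(r)\,\rmd s$ with $g_1=\tfrac12 r^2$ and $g_2=C-H$. To apply the contact cut collapsing the $s$-circles $\{\theta\}\times(\R/2\pi\Z)$ --- which Dehn-fills the solid torus along its longitude and hence yields $S^3$, the collapsed circle $\partial D^2$ becoming a single closed Reeb orbit, the binding of the resulting open book --- Lerman's construction requires the moment map $g_2=\alpha(\partial_s)$ to vanish to first order along $\partial D^2$. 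I would therefore modify $\alpha$ on the collar to a rotationally symmetric contact form $g_1(r)\,\rmd\theta+g_2(r)\,\rmd s$ with $g_2(1)=0$, $g_2'(1)\neq0$ and $g_1(1)\neq0$, interpolating to the suspension form in the interior; the cut then produces a smooth contact form on $S^3$ automatically, the cut radial coordinate being $\sqrt{g_2}$.

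The step I expect to be the main obstacle is reconciling this boundary model with the dynamics. Replacing $g_2=C-H$ by a function vanishing at $\partial D^2$ changes the Reeb rotation rate $-g_2'/g_1'$, so naively it would change the return map. The remedy --- and the reason the embedded page is smooth only on the interior --- is a radial reparametrisation $r\mapsto\chi(r)$ of the collar: one chooses $g_1,g_2$ and $\chi$ simultaneously so that under $\chi$ the induced area $\rmd\alpha$ on the page matches the standard area and the model's rotation-number-versus-area profile matches that of $\psi$, so that after the identification the return map on $\Int(D^2)$ is again $\psi$ (with return time tending to infinity at the binding). Carrying out this matching, subject to the contact condition $g_1g_2'-g_1'g_2\neq0$ throughout the collar, is the technical heart of the argument; granting it, the page $D^2\times\{0\}$ is a disc-like global surface of section with return map $\psi|_{\Int(D^2)}$, and $\psi$ embeds into a Reeb flow on $S^3$.
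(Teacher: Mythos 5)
Your overall strategy --- realise $\psi$ as the return map of the Reeb flow of a suspension form $\lambda+(C-H_s)\,\rmd s$ on the solid torus, then close up to $S^3$ by a Lerman contact cut --- is exactly the paper's. The divergence, and the genuine gap, lies in the choice of circle action for the cut. You cut along the orbits of $\partial_s$, whose momentum map $g_2=\alpha(\partial_s)=C-H$ is a large positive constant on $\partial D^2$, and to force it to vanish there you propose to deform $\alpha$ in the collar to $\tilde g_1(r)\,\rmd\theta+\tilde g_2(r)\,\rmd s$ with $\tilde g_2(1)=0$, repairing the change of dynamics by a radial reparametrisation $\chi$. This repair cannot be confined to the collar. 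As you note, the invariant to match is the rotation-number-versus-enclosed-area profile of the return map; matching it (necessarily only modulo $1$) forces $-\tilde g_2'/\tilde g_1'=-(g_2'/g_1')\circ\chi+n$ for a locally constant, hence constant, integer $n$, i.e.\ $\tilde g_2=g_2\circ\chi-n\,g_1\circ\chi+\mathrm{const}$ with $\tilde g_1=g_1\circ\chi$. Since $\tilde g_2$ must drop from $C-H$ at the inner edge of the collar to $0$ at $r=1$ while $g_2\circ\chi$ does not, necessarily $n\neq 0$; but then the correction $-n\,g_1\circ\chi$ is nonconstant across the collar, so $\tilde\alpha$ cannot agree with $\alpha$ near the inner edge, and the interpolation you invoke does not exist. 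Concretely, if $H$ is constant near $\partial D^2$ (so $\psi=\mathrm{id}$ there), any $\tilde g_2$ interpolating from $C-H$ down to $0$ makes $-\tilde g_2'/\tilde g_1'$ large somewhere, and the new return map is not the identity on the corresponding circles unless that quantity is an integer throughout --- which again forces the global correction.

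The missing idea is that one should not deform $\alpha$ at all but change the cut direction. After adding a constant so that $H_s|_{\partial D^2}=h\in\N$, the momentum map of the action generated by $Y=\partial_s-h\partial_\theta$ is $\alpha(Y)=H_s-hr^2$, which vanishes on $\partial(S^1\times D^2)$ automatically and is regular there by the contact condition. Cutting along $Y$ --- equivalently, your cut along $\partial_s$ after the global coordinate change $\theta\mapsto\theta+hs$, which rewrites $\alpha$ as $(H_s-hr^2)\,\rmd s+r^2\,\rmd\theta'$ --- collapses a $(1,-h)$-curve, still yields $S^3$, and leaves the Reeb field $\partial_s+X_s$, hence the return map on $\{0\}\times\Int(D^2)$, untouched. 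This is precisely the integer twist $n=h$ that your matching problem demands, implemented globally rather than in the collar; with it no reparametrisation of the dynamics is needed, only the non-smooth radial coordinate $\sqrt{\mu}$ of the cut, which affects the parametrisation of the page but not the return map.
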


This proposition will be given a short proof in Section~\ref{section:cuts},
after a discussion of contact cuts and their relation to contact open books.

In order to use Proposition~\ref{prop:intro} for proving
Theorem~\ref{thm:main}, in particular for the limit process in the
Fayad--Katok construction, we need to write the area-preserving
diffeomorphisms under consideration in a canonical fashion
as the time $2\pi$ map of a non-autonomous Hamiltonian function.
This is done in Section~\ref{section:area-disc}. The discussion there
includes a proof of the following result, which is probably folklore.

\begin{thm}
\label{thm:diff}
The space $\Diffc(D^2,\omega)$ of area-preserving diffeomorphisms
of $D^2$ with compact support in the interior $\Int(D^2)$
has $\{\mathrm{id}_{D^2}\}$ as a strong deformation retract.
\end{thm}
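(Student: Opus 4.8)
The plan is to produce the strong deformation retraction explicitly by combining a (non-area-preserving) contraction of the full diffeomorphism group with a canonical Moser correction that restores the area form $\omega$ at every stage. I work throughout in the $C^\infty$-topology on $\Diffc(D^2)$, the group of diffeomorphisms of $D^2$ with compact support in $\Int(D^2)$. The starting point is the classical theorem of Smale that $\Diffc(D^2)$ is contractible; equivalently, $\{\mathrm{id}_{D^2}\}$ is a strong deformation retract of it. This provides, for each $\psi\in\Diffc(D^2,\omega)\subset\Diffc(D^2)$, a path $t\mapsto\psi_t$ with $\psi_0=\psi$ and $\psi_1=\mathrm{id}_{D^2}$, depending continuously on $\psi$ and constant in $t$ when $\psi=\mathrm{id}_{D^2}$. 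These $\psi_t$ will in general fail to preserve $\omega$.

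The correction step uses Moser's trick in a canonical, parametric form. For $\phi\in\Diffc(D^2)$ the pullback $\phi^*\omega$ is again an area form agreeing with $\omega$ near $\partial D^2$ and satisfying $\int_{D^2}\phi^*\omega=\int_{D^2}\omega$, so the convex combinations $\omega_r=(1-r)\omega+r\,\phi^*\omega$, $r\in[0,1]$, form a path of such area forms. Since $\phi^*\omega-\omega$ has vanishing integral and compact support in $\Int(D^2)$, it equals $\rmd\beta_\phi$ for a compactly supported $1$-form $\beta_\phi$, which I choose by a fixed linear right inverse to $\rmd$ so that $\beta_\phi$ depends continuously on $\phi$ and vanishes whenever $\phi^*\omega=\omega$. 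Solving $\iota_{X_r}\omega_r=-\beta_\phi$ and integrating the time-dependent vector field $X_r$ from $r=0$ to $r=1$ yields a Moser map $\rho_\phi\in\Diffc(D^2)$ with $\rho_\phi^*(\phi^*\omega)=\omega$; it depends continuously on $\phi$ and equals $\mathrm{id}_{D^2}$ precisely when $\phi$ already preserves $\omega$.

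Finally I would set $F(\psi,t)=\psi_t\circ\rho_{\psi_t}$. By construction $(\psi_t\circ\rho_{\psi_t})^*\omega=\rho_{\psi_t}^*(\psi_t^*\omega)=\omega$, so $F(\psi,t)\in\Diffc(D^2,\omega)$ for all $t$. At $t=0$ the map $\psi_0=\psi$ is area-preserving, whence $\rho_{\psi_0}=\mathrm{id}_{D^2}$ and $F(\psi,0)=\psi$; at $t=1$ we have $\psi_1=\mathrm{id}_{D^2}$ and $F(\psi,1)=\mathrm{id}_{D^2}$; and if $\psi=\mathrm{id}_{D^2}$ then $\psi_t\equiv\mathrm{id}_{D^2}$ and $\rho_{\psi_t}\equiv\mathrm{id}_{D^2}$, so $F(\psi,t)\equiv\mathrm{id}_{D^2}$. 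Thus $F$ is the desired strong deformation retraction. Conceptually this is just the statement that, by Moser's theorem, the orbit map of the $\Diffc(D^2)$-action on the convex (hence contractible) space $\Omega$ of area forms agreeing with $\omega$ near $\partial D^2$ and of the same total area gives a Serre fibration
\[ \Diffc(D^2,\omega)\hookrightarrow\Diffc(D^2)\longrightarrow\Omega, \]
with local sections furnished by Moser; the area-preserving group is therefore homotopy equivalent to the contractible group $\Diffc(D^2)$, and the explicit $F$ above is this equivalence turned into a basepoint-preserving contraction.

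I expect the main obstacle to be not any single computation but the careful bookkeeping of continuity: one must ensure that Smale's contraction, the choice of primitive $\beta_\phi$, and the integration of the Moser vector field all fit together continuously in the function-space topology while simultaneously respecting the basepoint $\mathrm{id}_{D^2}$. The genuinely deep input is Smale's contractibility theorem for $\Diffc(D^2)$; the area-preserving refinement then follows from the softness of Moser's trick in dimension two, where the only cohomological obstruction lives in $H^2_{\mathrm c}(\Int(D^2))\cong\R$ and is killed by the normalisation of total area.
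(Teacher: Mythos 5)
Your proposal is correct and follows essentially the same route as the paper: contract $\Diffc(D^2)$ via Smale, push the resulting path of pullback forms along the convex-linear contraction of the space of normalised area forms, and correct by a parametric Moser isotopy built from a canonically (linearly) chosen compactly supported primitive, finally composing $\psi_t$ with the Moser map. The paper merely makes your ``fixed linear right inverse to $\rmd$'' explicit via a bump-function Poincar\'e lemma (Lemma~\ref{lem:poincare}); otherwise the two arguments coincide, down to the formula $F(\psi,t)=\psi_t\circ\rho_{\psi_t}$.
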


The proof of Theorem~\ref{thm:main} will be given in
Section~\ref{section:pseudorot}, except for the statement about
dynamical convexity, which will be established in
Section~\ref{section:invariants}, where we compute Conley--Zehnder indices
and other invariants of the Reeb flows we construct.

In Proposition~\ref{prop:conjugate} we shall see
that if $\psi$ embeds into a Reeb flow, then
so does its conjugate $\varphi^{-1}\circ\psi\circ\varphi$ under any
area-preserving diffeomorphism $\varphi$ of~$D^2$.
Strictly speaking, the embeddability property has to be formulated
for a pair $(H_s,\lambda)$, where $\lambda$ is a primitive of the
area form on~$D^2$. In Section~\ref{subsection:dF} we shall see that,
at least up to $C^2$-differentiability, the choice of
primitive is irrelevant.
\section{Contact open books as contact cuts}
\label{section:cuts}
In this section we are going to prove Proposition~\ref{prop:intro}.
We begin by describing the cut construction, and how it can be used
to construct open book decompositions. We then define a contact
form on the solid torus $S^1\times D^2$ whose Reeb flow gives
the solid torus the structure of a mapping torus of $(D^2,\psi)$,
where $\psi$ is the given Hamiltonian diffeomorphism. The desired
contact form on $S^3$ is then produced by a contact cut.
\subsection{Open books via the cut construction}
\label{subsection:open-book}
An \emph{open book decomposition} of a $3$-manifold $M$ consists of
a link $B\subset M$, called the \emph{binding}, and a smooth, locally trivial
fibration $\frakp\co M\setminus B\rightarrow S^1=\R/2\pi\Z$. It is
assumed that $\frakp$ is well behaved near the binding. By this we mean
that one can find a tubular neighbourhood $B\times D^2$ of $B$ in $M$
on which the map $\frakp$ is given by the angular coordinate in the
$D^2$-factor. The closures $\Sigma_s$ of the fibres $\frakp^{-1}(s)$,
$s\in S^1$, are called the \emph{pages}.
The binding is the common boundary of the pages.

Every closed, orientable $3$-manifold admits an open book
decomposition, see~\cite{rolf76}.

The vector field $\partial_s$ on $S^1$ lifts to a vector field on
$M\setminus B$ that coincides near $B$ with the angular vector field
on the $D^2$-factor of $B\times D^2$.
The time $2\pi$ flow of this vector field defines
a diffeomorphism $\psi$ of $\Sigma:=\Sigma_0$ to itself, equal to the
identity near the boundary $\partial\Sigma=B$. This diffeomorphism is called
the \emph{monodromy} of the open book.

Conversely, an open book can be built starting from
a compact surface $\Sigma$ with boundary, and a diffeomorphism $\psi$ of
$\Sigma$ that equals the identity near the boundary. This construction
is well known, see~\cite[Section 4.4.2]{geig08}. Here we are going
to interpret it as a cut construction in the sense of
Lerman~\cite{lerm01}, cf.\ \cite[Remark~5.6]{mnw13} and
\cite[Section~2.2.3]{doer14}.

This construction starts with the mapping torus
\[ V:=\Sigma\times [0,2\pi]/(x,2\pi)\sim(\psi(x),0)\]
of $(\Sigma,\psi)$. The boundary of $V$ is
$\partial V=\partial\Sigma\times S^1$. Write $\theta$, by slight abuse
of notation, for the
$S^1$-coordinate on the components of the boundary $\partial\Sigma$,
and $s$ for the $S^1$-coordinate on $V$ given by the projection onto
the second factor.

Consider the $S^1$-action on the boundary $\partial V$ of the mapping torus
generated by the vector field $\partial_s-h\partial_{\theta}$, where
$h$ is an integer. If $\partial V$ has several components
$\partial_iV$, $i=1,\ldots,k$, one may choose
an integer $h_i$ for each component. Let $M:=V/\!\!\sim$ be the
quotient space obtained by identifying points on $\partial V$ that lie
on the same $S^1$-orbit. The idea of Lerman's cut construction
is to identify this seemingly singular quotient space with the quotient
of a larger manifold under a free $S^1$-action. In the present setting,
the details will be given in the following proposition and its proof; for the
general construction see~\cite{lerm01}.

\begin{prop}
The space $M=V/\!\!\sim$ is a smooth closed $3$-manifold. It carries the
structure of an open book with binding
$B:=\bigl(\partial V/\!\!\sim\bigr)\cong\partial\Sigma$ and projection map
\[ \frakp\co M\setminus B=\Int(V)\longrightarrow S^1\]
given by the $s$-coordinate. The monodromy of the open book
equals the composition of $\psi$ with an $h_i$-fold right-handed Dehn
twist along a curve parallel to the boundary circle $\partial_iV$,
$i=1,\ldots, k$.
\end{prop}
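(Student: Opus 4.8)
The plan is to construct explicitly the free $S^1$-action on a larger manifold whose quotient is $M$, thereby simultaneously proving that $M$ is a smooth closed $3$-manifold and identifying its open book structure. First I would fatten the boundary: instead of working with $V$ directly, I would glue a collar $\partial\Sigma\times D^2$ (one copy for each boundary component $\partial_i\Sigma$) along $\partial V=\partial\Sigma\times S^1$, where the gluing identifies the boundary circle $\partial D^2$ with the $S^1$-orbit direction. On this enlarged space $\widetilde{V}$ one has an $S^1$-action that is free in the interior of the glued solid tori and agrees on $\partial V$ with the action generated by $\partial_s-h_i\partial_\theta$. The cut manifold $M$ is then the quotient $\widetilde{V}/S^1$, and the collapsed orbits on $\partial V$ become the binding $B$, which sits inside $M$ as the core circles $\partial\Sigma\times\{0\}$ of the solid tori. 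Smoothness of $M$ follows because near $B$ the quotient is modelled on $D^2$ with its rotation action cut down to a smooth disc.

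Next I would verify the open book structure. Away from $B$ the quotient map is a diffeomorphism onto $\Int(V)$, so $M\setminus B\cong\Int(V)$ and the projection $\frakp$ is simply the $s$-coordinate descended from the mapping torus; its fibres are the interiors of the pages $\Sigma_s$, and their closures meet along $B$. The key computation is near the binding: in the solid-torus model the action is a rotation, so the quotient projection to $S^1$ reads off the angular coordinate in the $D^2$-factor, which is exactly the normal-form condition in the definition of an open book in Subsection~\ref{subsection:open-book}. This shows $(B,\frakp)$ is a genuine open book on~$M$.

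The main obstacle, and the point requiring real care, is the monodromy computation. The monodromy is by definition the time $2\pi$ return map of the lift of $\partial_s$, and I expect the Dehn twist to arise precisely from how the $S^1$-coordinate $s$ interacts with the angular coordinate $\theta$ under the quotient near $B$. The plan is to track a point as it flows once around in $s$: in the interior the return map is the mapping-torus monodromy $\psi$, but the gluing that collapses the orbits of $\partial_s-h_i\partial_\theta$ forces a correction near each boundary circle. Choosing a trivialisation of the collar in which the $S^1$-action is standard, the change of coordinates relating this trivialisation to the product coordinates $(\theta,s)$ on $\partial V$ shears $\theta$ by $h_i$ units per full loop in~$s$; this shear, supported in the collar and equal to the identity on the inner boundary of the collar, is exactly an $h_i$-fold right-handed Dehn twist along a curve parallel to~$\partial_i V$. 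I would make the handedness explicit by fixing orientation conventions at the outset and checking the sign of $h_i$ against them, since this is where sign errors typically creep in. Composing $\psi$ with these twists, one per boundary component, yields the asserted monodromy.
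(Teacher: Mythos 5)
Your second and third paragraphs follow the paper's own route: the page fibration is read off from the disc angle near the binding and from the $s$-coordinate in the interior, and the Dehn twist in the monodromy arises from the shear between the product coordinates $(\theta,s)$ on the collar and the binding-adapted coordinates, with the handedness to be pinned down by orientation conventions. That part of the plan is sound.

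The genuine problem is in your first paragraph, i.e.\ in how you realise $M$ as ``the quotient of a larger manifold by a free $S^1$-action''. Your $\widetilde{V}=V\cup_{\partial V}(\partial\Sigma\times D^2)$ is a closed $3$-manifold, so $\widetilde{V}/S^1$ would be $2$-dimensional and cannot be~$M$. Moreover, no global $S^1$-action on $\widetilde{V}$ extending $\partial_s-h_i\partial_\theta$ need exist: on the mapping torus the flow of $\partial_s$ is only an $\R$-action whose time-$2\pi$ map is $\psi\neq\mathrm{id}$ in general, and the rotation action on a glued solid torus is in any case not free along its core. You have conflated two correct constructions. Either (a) drop the quotient: $M\cong\widetilde{V}$ is the Dehn filling of $V$ along the orbit slopes, and one must then exhibit a homeomorphism $\widetilde{V}\rightarrow V/\!\!\sim$ and use it to install the smooth structure --- which is in effect what the paper does near the binding via the explicit maps (\ref{eqn:nbhd-B1}) and (\ref{eqn:nbhd-B2}); or (b) follow Lerman, in which case the larger manifold must be $4$-dimensional: the paper takes the level set $\mu^{-1}(0)=\{\tau+|z|^2=0\}$ inside $N\times\C$, with $N$ a bicollar of $\partial V$, and the anti-diagonal action (\ref{eqn:action}), which is free everywhere --- including over $z=0$ --- precisely because of the $s$-translation, so that $\mu^{-1}(0)/S^1$ is manifestly smooth and is then identified with $\Veps/\!\!\sim$. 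Relatedly, your one-line justification of smoothness (``modelled on $D^2$ with its rotation action cut down to a smooth disc'') is exactly where the content lies: one has to check that the assignment $(b,\rho\rme^{\rmi\vartheta})\mapsto\bigl[(-\rho^2,\,b-h\vartheta,\,\vartheta;\rho)\bigr]$ is well defined at $\rho=0$ and a diffeomorphism onto the quotient, which is the computation your sketch currently omits.
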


\begin{proof}
Since we have to consider the components of $\partial V$ separately, we may
as well pretend that $\partial V$ is connected.
Write $(-\varepsilon,0]\times\partial\Sigma$ for a collar neighbourhood
of $\partial\Sigma$ in $\Sigma$ on which $\psi$ acts as the identity. Then
\[ \Veps:=(-\varepsilon,0]\times\partial\Sigma\times S^1\]
is a collar neighbourhood of $\partial V$ in~$V$. We think of $\Veps$
as a subset of the open bicollar
\[ N:=(-\varepsilon,\varepsilon)\times\partial\Sigma\times S^1.\]
Lift the $S^1$-action on $\partial V=\partial\Sigma\times S^1$ in the
obvious way to an $S^1$-action on~$N$. Then the function
$N\rightarrow (-\varepsilon,\varepsilon)$ assigning to each point
$(\tau,\theta,s)\in N$ its bicollar parameter
$\tau$ is smooth, $S^1$-invariant, and its $0$-level set $\partial V$
is regular. The function
\begin{equation}
\label{eqn:mu}
\begin{array}{rccc}
\mu\co & N\times\C         & \longrightarrow & \R\\
       & (\tau,\theta,s;z) & \longmapsto     & \tau+|z|^2
\end{array}
\end{equation}
is invariant under the anti-diagonal $S^1$-action
\begin{equation}
\label{eqn:action}
\rme^{\rmi\varphi}(\tau,\theta,s;z):=
(\tau,\theta-h\varphi,s+\varphi;\rme^{-\rmi\varphi}z),
\end{equation}
and $\mu^{-1}(0)$ is a regular level set on which the
$S^1$-action is free. It follows that $\mu^{-1}(0)/S^1$ is
a smooth manifold.

Observe that $\mu^{-1}(0)=P\times\partial V$, where $P$ is the
paraboloid
\[ P:=\bigl\{(\tau,z)\in(-\varepsilon,\varepsilon)\times\C\co
\tau=-|z|^2\bigr\}.\]
The $S^1$-action on the $P$-factor is free away from the apex $(0,0)$,
which is a fixed point of the action. It follows that taking the
quotient of $\mu^{-1}(0)$ under the $S^1$-action is the same
as forming the quotient space $\Veps/\!\!\sim$.
Thus, $M=V/\!\!\sim$ is a smooth manifold. The homeomorphism
\[ \bigl(\Veps/\!\!\sim\bigr)\longrightarrow\mu^{-1}(0)/S^1\]
induced by
\begin{equation}
\label{eqn:nbhd-B1}
\begin{array}{ccc}
\Veps           & \longrightarrow & \mu^{-1}(0)\\
(\tau,\theta,s) & \longmapsto     & (\tau,\theta,s;\sqrt{-\tau})
\end{array}
\end{equation}
defines the smooth manifold structure of $M$ near $B=\partial V/\!\!\sim$.

The manifold $\mu^{-1}(0)/S^1$ is diffeomorphic to $\partial\Sigma\times
\Int(D^2_{\sqrt{\varepsilon}})$, which can be seen as follows.
Consider the differentiable map
\[\begin{array}{ccc}
\mu^{-1}(0)       & \longrightarrow & \partial\Sigma\times
                                    \Int(D^2_{\sqrt{\varepsilon}})\\
(\tau,\theta,s;z) & \longmapsto     & (\theta+hs,\rme^{\rmi s}z).
\end{array}\]
Notice that on the left-hand side, $\tau$ is determined by $\tau=-|z|^2$.
Points on the same orbit of the $S^1$-action (\ref{eqn:action})
have the same image, so the map descends to
\[ \mu^{-1}(0)/S^1\longrightarrow\partial\Sigma\times
\Int(D^2_{\sqrt{\varepsilon}}). \]
This induced map is a diffeomorphism with inverse map
\begin{equation}
\label{eqn:nbhd-B2}
\begin{array}{ccc}
\partial\Sigma\times
\Int(D^2_{\sqrt{\varepsilon}}) & \longrightarrow & \mu^{-1}(0)/S^1\\
(b,\rho\rme^{\rmi\vartheta})   & \longmapsto     & 
      \bigl[(-\rho^2,b-h\vartheta,\vartheta;\rho)\bigr].
\end{array}
\end{equation}
This map is well defined even for $\rho=0$, since the points
$(0,b-h\vartheta,\vartheta;0)$ precisely make up the $S^1$-orbit through
the point $(0,b,0;0)$ as $\vartheta$ varies over~$S^1$.

This diffeomorphism identifies $B=\partial V/\!\!\sim$ with
$\partial\Sigma\times\{0\}$. The $S^1$-valued function
\[ (\tau,\theta,s;z)\longmapsto s+\arg z\]
on $\mu^{-1}(0)\setminus\{z=0\}$ is $S^1$-invariant, and under the
identification
\[ \Int(\Veps)\cong\bigl(\mu^{-1}(0)\setminus\{z=0\}\bigr)/S^1\]
coming from (\ref{eqn:nbhd-B1}), this function coincides with~$s$,
i.e.\ the fibration $\frakp$ defining the open book.
On the other hand, under the identification
\[ \partial\Sigma\times\bigl(\Int(D^2_{\sqrt{\varepsilon}})\setminus\{0\}\bigr)
\cong\bigl(\mu^{-1}(0)\setminus\{z=0\}\bigr)/S^1\]
coming from (\ref{eqn:nbhd-B2}), that function coincides with~$\vartheta$,
i.e.\ the angular coordinate in the disc factor.

It remains to determine the monodromy. On the mapping torus~$V$,
the monodromy $\psi$ is the return map on $\Sigma\times\{0\}$ given by
the flow $[(x,0)]\mapsto [(x,t)]$ at time~$2\pi$.
On the collar~$\Veps$, this flow is given by
\[ (\tau,\theta,s)\longmapsto(\tau,\theta, s+t),\]
and the return map is the identity. On the other hand,
on the neighbourhood $\partial\Sigma\times\Int(D^2_{\sqrt{\varepsilon}})$
of the binding, the monodromy should also be the
identity near $\rho=0$, realised as the time $2\pi$ map of the flow
\[ (b,\rho\rme^{\rmi\vartheta})\longmapsto(b,\rho\rme^{\rmi(\vartheta+t)})\]
in angular direction along the disc factor. Under the
identification of
\[ \partial\Sigma\times\bigl(\Int(D^2_{\sqrt{\varepsilon}})
\setminus\{0\}\bigr)\]
with $\Int(\Veps)$, this flow becomes
(near $\tau=0$)
\[ (\tau,\theta,s)\longmapsto(\tau,\theta-ht,s+t).\]
This implies that the monodromy on $\Veps$ has to be of the form
\[ (\tau,\theta,s)\longmapsto(\tau,\theta+\chi(\tau)t,s+t),\]
where $\chi$ interpolates smoothly between $0$ near $\tau=-\varepsilon$
and $-h$ near $\tau=0$. This amounts to an $h$-fold right-handed Dehn
twist along a $\theta$-circle, i.e.\ a boundary parallel curve.
\end{proof}
\subsection{Hamiltonian disc maps and contact forms}
\label{subsection:disc-contact}
The mapping torus of any orientation-preserving diffeomorphism $\psi$ of the
closed unit disc $D^2$ is a copy of the solid torus $S^1\times D^2$. Our aim
in this section is to construct contact forms on $S^1\times D^2$, starting
from a diffeomorphism $\psi$ that arises as the time $2\pi$ map of
a non-autonomous Hamiltonian. This construction is standard,
see~\cite{abhs18}. Much of our discussion generalises to
Hamiltonian diffeomorphisms of arbitrary compact, oriented surfaces with
boundary. We restrict attention to the $2$-disc, since this is
the case that will interest us later when we construct Reeb flows on $S^3$,
and it allows us to work with global coordinates.

Write $(r,\theta)$ for polar coordinates on the closed unit $2$-disc $D^2$.
As area form on $D^2$ we take $\omega:=2r\,\rmd r\wedge\rmd\theta$,
with primitive $1$-form $\lambda:=r^2\,\rmd\theta$.
Let $H_s$, $s\in S^1=\R/2\pi\Z$, be a $2\pi$-periodic Hamiltonian function
on~$D^2$. Throughout the present section, the following assumption, which
is part of the hypotheses in Proposition~\ref{prop:intro}, will be understood.

\begin{ass}
\label{ass:H}
There is a neighbourhood
of the boundary $\partial D^2$ in $D^2$ on which $H_s$ depends only on the
radial coordinate~$r$, not on $\theta$ or the `time' parameter~$s$.
\end{ass}

The Hamiltonian vector field $X_s$ is defined by
\[ \omega(X_s,\,.\,)=\rmd H_s.\]
This is the sign convention of~\cite{abhs18} and~\cite{mcsa17}, and it is
the one which is convenient in the present context.
By our assumption on $H_s$, the vector field $X_s$ will
be a multiple of the angular vector field $\partial_{\theta}$
near the boundary of $S^1\times D^2$. Without changing the Hamiltonian
vector field, we may assume that $H_s$ is as large as we like, and that
\begin{equation}
\label{eqn:boundary}
H_s|_{\partial D^2}=:h\in\N,
\end{equation}
by adding a positive constant to the Hamiltonian function.

\begin{lem}
For $H_s$ sufficiently large, the $1$-form
\[ \alpha:=H_s\,\rmd s +\lambda\]
is a positive contact form on $S^1\times D^2$.
Specifically, the condition for $\alpha$ to be a positive contact
form is given by
\begin{equation}
\label{eqn:contact}
H_s+\lambda(X_s)>0.
\end{equation}
\end{lem}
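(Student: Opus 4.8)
The plan is to compute the contact condition $\alpha\wedge\rmd\alpha>0$ directly in the coordinates $(s,r,\theta)$ on $S^1\times D^2$ and show it reduces to the stated inequality~\eqref{eqn:contact}. First I would differentiate $\alpha=H_s\,\rmd s+\lambda$, using $\lambda=r^2\,\rmd\theta$ so that $\rmd\lambda=\omega=2r\,\rmd r\wedge\rmd\theta$, obtaining
\[
\rmd\alpha=\rmd H_s\wedge\rmd s+\omega.
\]
Here $\rmd H_s$ is the full exterior derivative on $S^1\times D^2$, which splits as $\rmd H_s=\partial_s H_s\,\rmd s+\rmd^{D}H_s$, where $\rmd^{D}H_s$ is the differential in the disc directions only; since $\rmd s\wedge\rmd s=0$, the term $\partial_s H_s\,\rmd s\wedge\rmd s$ drops out and one is left with $\rmd\alpha=\rmd^{D}H_s\wedge\rmd s+\omega$.

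Next I would wedge with $\alpha$ itself. The three-form $\alpha\wedge\rmd\alpha$ expands into four terms, of which the ones containing $\rmd s\wedge\rmd s$ vanish, leaving a combination of $H_s\,\rmd s\wedge\omega$ and $\lambda\wedge\rmd^{D}H_s\wedge\rmd s$. Choosing the orientation so that $\rmd s\wedge\omega$ is the positive volume form on $S^1\times D^2$, the first term contributes $H_s$. The second term I would rewrite using the definition $\omega(X_s,\,\cdot\,)=\rmd H_s$: since $\rmd^D H_s=\omega(X_s,\,\cdot\,)$ on the disc, the two-form $\lambda\wedge\rmd^D H_s$ is $\lambda(X_s)$ times the area form $\omega$, up to the sign dictated by the contraction identity $\iota_{X_s}\omega=\rmd H_s$. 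The clean way to organise this is to contract the top form $\rmd s\wedge\omega$ with $X_s$ and compare; the net effect is that $\alpha\wedge\rmd\alpha=\bigl(H_s+\lambda(X_s)\bigr)\,\rmd s\wedge\omega$, so positivity is exactly~\eqref{eqn:contact}.

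For the first assertion — that $\alpha$ is a contact form once $H_s$ is sufficiently large — I would note that $\lambda(X_s)=r^2\,\rmd\theta(X_s)$ is a continuous function on the compact manifold $S^1\times D^2$, hence bounded below, say by $-C$. Adding a constant to $H_s$ (which by Assumption~\ref{ass:H} and the remark preceding~\eqref{eqn:boundary} leaves $X_s$ unchanged) raises $H_s$ uniformly, so for $H_s>C$ everywhere the quantity $H_s+\lambda(X_s)$ is strictly positive and $\alpha$ is a positive contact form.

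The only genuinely delicate point is the bookkeeping of signs and the contraction identity in the middle step; the interchange of $\rmd^D H_s$ with $\iota_{X_s}\omega$ must be carried out with the sign convention $\omega(X_s,\,\cdot\,)=\rmd H_s$ of the paper rather than the opposite one, or the inequality will come out reversed. Everything else is a routine local computation.
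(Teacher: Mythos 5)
Your proposal is correct and follows essentially the same route as the paper: compute $\alpha\wedge\rmd\alpha=\rmd s\wedge\bigl(H_s\omega+\lambda\wedge\rmd H_s\bigr)$ and convert the second summand via the identity $\lambda\wedge\rmd H_s=\lambda(X_s)\,\omega$, verified by contracting with $X_s$. The only differences are expository — you make explicit the splitting off of the $\partial_s H_s\,\rmd s$ term and the compactness argument for choosing the additive constant, both of which the paper treats as remarks.
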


\begin{proof}
We compute
\begin{eqnarray*}
\alpha\wedge\rmd\alpha
  & = & \bigl(H_s\,\rmd s+\lambda\bigr)\wedge
        \bigl(\rmd H_s\wedge\rmd s+\omega\bigr)\\
  & = & \rmd s\wedge\bigl(H_s\omega+\lambda\wedge\rmd H_s\bigr).
\end{eqnarray*}
By adding a large constant to the Hamiltonian function, we can make
the first summand in parentheses large without changing the
second summand.

A word on notation is in order. When we write $\rmd H_s$, we mean
the differential of the function $H_s\co D^2\rightarrow\R$ for a fixed
value of the parameter~$s$, that is, there is no summand
$(\partial H_s/\partial s)\,\rmd s$.

With this understood, we have the identity
\[ \lambda\wedge\rmd H_s=\lambda(X_s)\cdot\omega,\]
which can be verified by taking the interior product with $X_s$
on both sides. (At points where $X_s=0$, the $2$-forms
on either side vanish.) It follows that the contact condition for $\alpha$
is equivalent to~(\ref{eqn:contact}).
\end{proof}

\begin{rem}
Since $\lambda$ equals the interior product of $\omega$ with
$r\partial_r/2$, we have $\lambda(X_s)=-\rmd H_s(r\partial_r/2)$,
so the contact condition (\ref{eqn:contact}) can equivalently be written as
\begin{equation*}
\label{eqn:contact-alt}
\tag{\ref{eqn:contact}'}
r\,\frac{\partial H_s}{\partial r}<2H_s.
\end{equation*}
\end{rem}

\begin{lem}
\label{lem:R}
When the contact condition (\ref{eqn:contact}) is satisfied, the
vector field
\[ R:=\partial_s+X_s\]
equals, up to positive scale, the Reeb vector field of~$\alpha$.
\end{lem}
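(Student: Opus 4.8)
The plan is to verify directly that $R=\partial_s+X_s$ satisfies, up to a positive factor, the two defining properties of the Reeb vector field $R_\alpha$ of the contact form $\alpha$, namely $\iota_{R_\alpha}\rmd\alpha=0$ and $\alpha(R_\alpha)=1$. Since the Reeb field is the \emph{unique} vector field with these two properties, it suffices to show that $\iota_R\rmd\alpha=0$ and that $\alpha(R)$ is a positive function: then $R/\alpha(R)$ satisfies both conditions and hence equals $R_\alpha$, so that $R=\alpha(R)\,R_\alpha$ is a positive multiple of the Reeb field.

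First I would reuse the formula $\rmd\alpha=\rmd H_s\wedge\rmd s+\omega$ established in the proof of the previous lemma, keeping in mind the convention that $\rmd H_s$ denotes the spatial differential of $H_s$ for fixed $s$. Thus $\iota_{\partial_s}\rmd H_s=0$ and $\iota_{X_s}\rmd s=0$, while $\iota_{\partial_s}\omega=0$ and $\iota_{\partial_s}\rmd s=1$, since $\omega$ and $\lambda$ live on the disc factor. Computing the interior product term by term, the contribution of $\partial_s$ is
\[ \iota_{\partial_s}\bigl(\rmd H_s\wedge\rmd s+\omega\bigr)=-\rmd H_s, \]
while the contribution of $X_s$ is
\[ \iota_{X_s}\bigl(\rmd H_s\wedge\rmd s+\omega\bigr)=\iota_{X_s}\omega=\rmd H_s, \]
where I have used $\rmd H_s(X_s)=\omega(X_s,X_s)=0$ together with the defining equation $\omega(X_s,\,.\,)=\rmd H_s$ of the Hamiltonian vector field. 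Adding these gives $\iota_R\rmd\alpha=-\rmd H_s+\rmd H_s=0$, so $R$ lies in the (one-dimensional) kernel of $\rmd\alpha$.

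It then remains to evaluate $\alpha(R)$. Since $\rmd s(X_s)=0$ and $\lambda(\partial_s)=0$, one finds
\[ \alpha(R)=\bigl(H_s\,\rmd s+\lambda\bigr)(\partial_s+X_s)=H_s+\lambda(X_s), \]
which is precisely the left-hand side of the contact condition \eqref{eqn:contact} and hence strictly positive. This completes the argument, exhibiting $R=\bigl(H_s+\lambda(X_s)\bigr)\,R_\alpha$. I do not expect a serious obstacle here; the only point requiring care is the bookkeeping of interior products under the convention that $\rmd H_s$ carries no $\rmd s$-component, which is exactly what causes the two copies of $\rmd H_s$ to cancel and thereby places $R$ in the kernel of $\rmd\alpha$.
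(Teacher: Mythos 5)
Your proposal is correct and follows essentially the same route as the paper: compute $i_R\rmd\alpha=-\rmd H_s+\rmd H_s=0$ from $\rmd\alpha=\rmd H_s\wedge\rmd s+\omega$, then observe $\alpha(R)=H_s+\lambda(X_s)$, which is positive precisely by the contact condition. The extra bookkeeping you supply is consistent with the paper's sign convention $\omega(X_s,\,.\,)=\rmd H_s$ and with the convention that $\rmd H_s$ carries no $\rmd s$-component.
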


\begin{proof}
We have
\[ i_R\rmd\alpha=i_R\bigl(\rmd H_s\wedge\rmd s+\omega\bigr)=
-\rmd H_s+\rmd H_s=0\]
and
\[ \alpha(R)=H_s+\lambda(X_s),\]
so the contact condition (\ref{eqn:contact}) is the same as $\alpha(R)>0$.
\end{proof}

\begin{lem}
\label{lem:invariant}
On a collar neighbourhood of $\partial (S^1\times D^2)$
in $S^1\times D^2$ where $H_s$ depends only on~$r$ and
$\partial H_s/\partial s=0$,
the contact form $\alpha$ is invariant under the
$S^1$-action generated by the vector field
$Y:=\partial_s-h\partial_{\theta}$.
\end{lem}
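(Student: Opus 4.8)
The plan is to verify the stronger infinitesimal statement that the Lie derivative $\mathcal{L}_Y\alpha$ vanishes identically on the collar. Since $h\in\N$, the time-$t$ flow $\phi_t$ of $Y=\partial_s-h\partial_{\theta}$ is $(r,\theta,s)\mapsto(r,\theta-ht,s+t)$, which is $2\pi$-periodic and hence genuinely generates an $S^1$-action; as this flow is complete, invariance of $\alpha$ under the action is equivalent to $\mathcal{L}_Y\alpha=0$. So it suffices to compute this Lie derivative and check that it is zero wherever $H_s$ depends only on~$r$.

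I would carry out the computation via Cartan's formula $\mathcal{L}_Y\alpha=i_Y\rmd\alpha+\rmd(i_Y\alpha)$. For the exact term, $i_Y\alpha=\alpha(Y)=H_s-hr^2$, using $\rmd s(Y)=1$ and $\rmd\theta(Y)=-h$. For the other term I would use the formula $\rmd\alpha=\rmd H_s\wedge\rmd s+\omega$ already recorded above (where $\rmd H_s$ is the spatial differential, the $s$-derivative dropping out since $\rmd s\wedge\rmd s=0$) and feed in~$Y$; note, however, that in $\rmd(i_Y\alpha)=\rmd(H_s-hr^2)$ the \emph{full} differential is taken, so the term $\partial_s H_s\,\rmd s$ survives there. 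Expanding with $\omega=2r\,\rmd r\wedge\rmd\theta$ and doing a short bookkeeping of the interior products of the three basic two-forms $\rmd r\wedge\rmd s$, $\rmd\theta\wedge\rmd s$ and $\rmd r\wedge\rmd\theta$ with~$Y$, one finds after adding the two contributions that all the $\rmd r$- and $\rmd\theta$-terms cancel, leaving the clean identity
\[ \mathcal{L}_Y\alpha=\bigl(\partial_s H_s-h\,\partial_{\theta}H_s\bigr)\,\rmd s=(YH_s)\,\rmd s. \]

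The lemma then follows immediately: on the collar $H_s$ depends only on $r$, so $\partial_s H_s=0$ and $\partial_{\theta}H_s=0$, whence $YH_s=0$ and $\mathcal{L}_Y\alpha=0$. This also makes transparent that the two hypotheses in the statement (the $\theta$- and $s$-independence of $H_s$) are exactly what is required. There is no real obstacle here; the only point requiring care is the interior-product bookkeeping and making sure that the $\rmd r$-contributions coming from $i_Y\rmd\alpha$ and from $\rmd(i_Y\alpha)$ (each individually nonzero, namely $(2rh-\partial_r H_s)\,\rmd r$ and $(\partial_r H_s-2rh)\,\rmd r$) genuinely cancel. As a sanity check one can bypass Cartan's formula altogether: the flow $\phi_t$ fixes $r$ and merely translates $\theta$ and $s$, while the coefficient functions $H_s=H(r)$ and $r^2$ of $\alpha$ depend only on $r$ on the collar, so $\phi_t^{*}\alpha=\alpha$ is immediate.
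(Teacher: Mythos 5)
Your proposal is correct and follows essentially the same route as the paper: both verify $\mathcal{L}_Y\alpha=0$ on the collar via Cartan's formula applied to $\alpha(Y)=H_s-hr^2$ and $\rmd\alpha=\rmd H_s\wedge\rmd s+\omega$, with the same care about the surviving $(\partial H_s/\partial s)\,\rmd s$ term in the exact part. Your version is slightly more explicit in recording the clean general identity $\mathcal{L}_Y\alpha=(YH_s)\,\rmd s$ before imposing the hypotheses, but this is a presentational difference, not a different argument.
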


\begin{proof}
The Lie derivative of $\alpha$ with respect to $Y$ is, by the Cartan
formula,
\begin{eqnarray*}
L_Y\alpha
  & = & \rmd\bigl(\alpha(Y)\bigr)+i_Y\rmd\alpha\\
  & = & \rmd\bigl(H_s-hr^2\bigr)+i_Y\bigl(\rmd H_s\wedge\rmd s+\omega\bigr).
\end{eqnarray*}
Beware that in the first summand we also get a term
$(\partial H_s/\partial s)\,\rmd s$, but this term vanishes on
a collar neighbourhood of the boundary. In that neighbourhood,
where $H_s$ depends only on~$r$, we have $i_Y(\rmd H_s\wedge\rmd s)=
-\rmd H_s$. Then all terms in the expression for $L_Y\alpha$ cancel
in pairs.
\end{proof}

An $S^1$-action that preserves the contact form, not just the
contact structure, is called
a \emph{strict} contact $S^1$-action.
\subsection{Contact cuts}
\label{subsection:contactcuts}
Recall from \cite[Section~7.7]{geig08} that for a strict contact $S^1$-action
on a contact manifold $(N,\alpha)$ generated by a vector
field~$Y$, the \emph{momentum map} $\mu_N\co N\rightarrow\R$ is defined
as $\mu_N=\alpha(Y)$. From the identity
\begin{equation}
\label{eqn:reduce}
\rmd\mu_N=\rmd(\alpha(Y))=L_Y\alpha-i_Y\rmd\alpha=-i_Y\rmd\alpha
\end{equation}
it follows that the vector field $Y$ is tangent
to the level sets of~$\mu_N$. We also see that the level set $\mu_N^{-1}(0)$
is regular if and only if $Y$ is nowhere zero along this level. In that case,
the $S^1$-action is locally free on the $0$-level. If the action is
free, $\alpha$ induces a contact form on the quotient $\mu_N^{-1}(0)/S^1$.
This process is known as \emph{contact reduction}.
By (\ref{eqn:reduce}), the Reeb vector field of $\alpha$ is likewise
tangent to the level sets of~$\mu$, and it descends to the
Reeb vector field of the contact form on the reduced manifold.

The \emph{contact cut}, introduced by Lerman~\cite{lerm01}, produces a contact
form on the manifold obtained from the bounded manifold
$\mu_N^{-1}\bigl([0,\infty)\bigr)$ by collapsing the $S^1$-orbits on
the boundary $\mu_N^{-1}(0)$. Again, it is assumed that the $S^1$-action is
free on $\mu_{N}^{-1}(0)$.
This contact cut is constructed as follows. Consider the contact manifold
\[ \bigl( N\times\C,\alpha+x\,\rmd y-y\,\rmd x\bigr),\]
with circle action generated by $Y-(x\partial_y-y\partial_x)$.
The momentum map of this $S^1$-action is
\begin{equation}
\label{eqn:mu2}
\mu(p,z)=\mu_N(p)-|z|^2,\;\;\; (p,z)\in N\times\C.
\end{equation}
Then the reduced contact manifold $\mu^{-1}(0)/S^1$ is the desired cut.

Write $\pi\co\mu^{-1}(0)\rightarrow\mu^{-1}(0)/S^1$ for the projection onto
the orbit space. The contact form $\oalpha$ on the quotient is
characterised by
\[ \pi^*\oalpha=(\alpha+x\,\rmd y-y\,\rmd x)|_{T(\mu^{-1}(0))}.\]
It follows that the composition of maps
\begin{equation}
\label{eqn:strict-embed}
\begin{array}{ccccc}
\{p\in N\co\mu_N(p)>0\} & \longrightarrow & \mu^{-1}(0)
& \stackrel{\pi}{\longrightarrow} & \mu^{-1}(0)/S^1\\
p                       & \longmapsto     & \bigl(p,\sqrt{\mu_N(p)}\,\bigr)
& \longmapsto                     & \bigl[\bigl(p,\sqrt{\mu_N(p)}\,\bigr)\bigr]
\end{array}
\end{equation}
is an equidimensional strict contact embedding.

Likewise, the embedding
\[ \begin{array}{ccc}
\mu_N^{-1}(0) & \longrightarrow & \mu^{-1}(0)\\
p             & \longmapsto     & (p,0)
\end{array} \]
induces a codimension $2$ strict contact embedding of reduced manifolds,
\[ \mu_N^{-1}(0)/S^1\longrightarrow\mu^{-1}(0)/S^1.\]
\subsection{Disc maps and contact cuts}
We now combine the themes of the two preceding sections. Start with
the solid torus $V=S^1\times D^2$ with contact form
$\alpha=H_s\,\rmd s+\lambda$,
subject to the contact condition~(\ref{eqn:contact}). (If you prefer, you
may work with a slight thickening $N$ of the bounded manifold~$V$, but
this is not essential.) As before, we choose a Hamiltonian
function $H_s$ that satisfies Assumption~\ref{ass:H}
and condition~(\ref{eqn:boundary}).

Then the vector field
$Y:=\partial_s-h\partial_{\theta}$ generates a strict contact $S^1$-action
near the boundary $\partial V$. Along this boundary,
the momentum map $\mu_V=\alpha(Y)=H_s-hr^2$ takes the value zero.

\begin{lem}
Subject to the contact condition (\ref{eqn:contact-alt}), the boundary
$\partial V$ is a regular component of the $0$-level set of the momentum
map~$\mu_V$.
\end{lem}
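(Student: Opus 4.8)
The plan is to verify the two defining properties of a regular level-set component directly from the formula $\mu_V = H_s - hr^2$, which is valid on the collar neighbourhood where $H_s$ depends only on $r$. First I would confirm that $\partial V$ lies in the zero level. Along $\partial V$ we have $r=1$, and by the normalisation \eqref{eqn:boundary} we have $H_s|_{\partial D^2}=h$, so $\mu_V=H_s-hr^2 = h - h = 0$ there. Thus $\partial V \subset \mu_V^{-1}(0)$.

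The substance of the claim is regularity, i.e.\ that $\mu_V$ has no critical point along $\partial V$. By the identity \eqref{eqn:reduce}, we have $\rmd\mu_V = -i_Y\rmd\alpha$, so a point of $\mu_V^{-1}(0)$ is regular precisely when $Y$ is not tangent to the characteristic foliation of $\rmd\alpha$ there; equivalently, since $Y$ generates an $S^1$-action, it suffices to check that $Y$ is transverse to the Reeb direction and nonzero along the boundary. I would instead just compute $\rmd\mu_V$ in coordinates on the collar. Since $H_s$ depends only on $r$ there, we have
\[
\rmd\mu_V = \rmd(H_s - hr^2) = \Bigl(\frac{\partial H_s}{\partial r} - 2hr\Bigr)\rmd r.
\]
So $\rmd\mu_V$ vanishes at a boundary point if and only if $\partial H_s/\partial r = 2hr = 2h$ at $r=1$. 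But the contact condition \eqref{eqn:contact-alt} reads $r\,\partial H_s/\partial r < 2H_s$, which at $r=1$ on the boundary (where $H_s = h$) gives $\partial H_s/\partial r < 2h$. This strict inequality rules out the critical value exactly, so $\rmd\mu_V$ is nonzero along $\partial V$, establishing regularity.

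It remains to observe that $\partial V$ is an entire \emph{component} of the zero level, not merely contained in one. Since $\rmd\mu_V$ is a nonzero multiple of $\rmd r$ on the collar, the zero level of $\mu_V$ is locally cut out transversally in the $r$-direction; hence near $\partial V$ the set $\mu_V^{-1}(0)$ is a single smooth hypersurface coinciding with $\{r=1\}=\partial V$, which is closed and connected in the collar. Therefore $\partial V$ is a regular component of $\mu_V^{-1}(0)$, as claimed.

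I do not anticipate a genuine obstacle here: the entire content is that the strict inequality in the contact condition \eqref{eqn:contact-alt} is precisely what excludes $\partial_r$-degeneracy of the momentum map at the boundary. The only point requiring slight care is the sign and coefficient bookkeeping—confirming that the boundary value $H_s=h$ together with $r=1$ turns the general inequality $r\,\partial H_s/\partial r < 2H_s$ into the exact condition $\partial H_s/\partial r \ne 2h$ needed to kill the single potentially-vanishing component of $\rmd\mu_V$. The computation of $L_Y\alpha$ from Lemma~\ref{lem:invariant} guarantees that $Y$ does indeed generate a strict contact action on the collar, so that the momentum-map formalism of \eqref{eqn:reduce} applies without modification.
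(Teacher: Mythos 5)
Your proof is correct and follows essentially the same route as the paper's: evaluate the contact condition (\ref{eqn:contact-alt}) at $r=1$ using $H_s|_{\partial D^2}=h$ to get $\partial H_s/\partial r<2h$, and conclude that $\rmd\mu_V(\partial_r)=\partial H_s/\partial r-2hr$ is nonzero (in fact negative) along $\partial V$. The extra verifications you include (that $\partial V$ lies in the zero level and is a whole component) are routine and consistent with the paper's implicit treatment.
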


\begin{proof}
The contact condition gives
\begin{equation}
\label{eqn:regular}
\frac{\partial H_s}{\partial r}\Big|_{\{s\}\times\partial D^2}<2h,
\end{equation}
which implies $\rmd\mu_V(\partial_r)<0$ along $\partial V$.
\end{proof}

\begin{rem}
\label{rem:sign}
The contact condition implies $\mu_V>0$ on the
interior of $V$ near $\partial V$. So the definition of
the function $\mu$ in (\ref{eqn:mu}) accords with the one in (\ref{eqn:mu2})
up to a global minus sign.
\end{rem}

\begin{lem}
\label{lem:quotient}
The manifold $(S^1\times D^2)/\!\!\sim$ obtained by collapsing the
orbits of $Y=\partial_s-h\partial_{\theta}$ along the boundary
$\partial (S^1\times D^2)$ is the $3$-sphere~$S^3$.
\end{lem}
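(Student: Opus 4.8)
The plan is to recognise the quotient as a union of two solid tori and to pin down the gluing as the one producing $S^3$. Write $\partial V=S^1\times\partial D^2$ for the boundary torus and let $m:=[\{s\}\times\partial D^2]$ and $\ell:=[S^1\times\{\mathrm{pt}\}]$ be the classes of the $\theta$-circle and the $s$-circle in $H_1(\partial V)\cong\Z^2$. Here $m$ is the meridian of the solid torus $V=S^1\times D^2$, since it bounds the disc $\{s\}\times D^2$, while $\ell$ is a longitude. My first step is to observe that collapsing the orbits of $Y=\partial_s-h\partial_{\theta}$ along $\partial V$ is a Dehn filling: this is exactly the neighbourhood description already established in Section~\ref{subsection:open-book}, where by \eqref{eqn:nbhd-B2} a neighbourhood of the binding $B=\partial V/\!\!\sim$ is diffeomorphic to $\partial D^2\times\Int(D^2)$, i.e.\ a solid torus $W$ with core~$B$. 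Hence $M=V\cup_{\partial V}W$ is a union of two solid tori, so in particular a closed orientable $3$-manifold with a genus-one Heegaard splitting.

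Next I would determine the filling slope, i.e.\ the class $m_W\in H_1(\partial V)$ of the meridian of $W$, which is the orbit that gets collapsed to a point of $B$ and bounds a coning disc of the filling. Each $Y$-orbit on $\partial V$ runs once in the $s$-direction and $-h$ times in the $\theta$-direction, so it represents $\ell-hm$; reading this off \eqref{eqn:nbhd-B2} (the $\vartheta$-circle at fixed $b$ shrinks to a point of $B$, and under the identification with $\partial V$ it traces out $\ell-hm$) gives $m_W=\ell-hm$. Finally, two solid tori glued so that $m_V=m$ and $m_W=\ell-hm$ meet in $|m\cdot(\ell-hm)|=|m\cdot\ell|=1$ point. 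Equivalently $H_1(M)=\Z^2/\langle m,\ell-hm\rangle=0$; since the manifolds admitting a genus-one Heegaard splitting are $S^1\times S^2$ and the lens spaces $L(p,q)$ (with $L(1,0)=S^3$), distinguished by their first homology, the vanishing of $H_1(M)$ forces $M\cong S^3$. Reassuringly, the answer is independent of~$h$, as it must be: changing $h$ alters the open-book monodromy only by boundary-parallel Dehn twists.

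The main obstacle is pinning down the slope $m_W$ with the correct orientation and sign conventions, i.e.\ verifying carefully from \eqref{eqn:nbhd-B2} precisely which boundary curve becomes the meridian of $W$; once this is settled the identification with $S^3$ is immediate. As a cross-check I would also note a second, softer route: by the analysis of Section~\ref{subsection:open-book}, $M$ is the total space of an open book with disc page and monodromy a product of boundary-parallel Dehn twists, and since $\pi_0\Diff(D^2,\partial D^2)$ is trivial this monodromy is isotopic to the identity, so the open book is the trivial one whose total space is~$S^3$.
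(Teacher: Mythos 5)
Your proof is correct, but it takes a genuinely different route from the paper's. The paper simply writes down an explicit quotient map
\[ (s;r,\theta)\longmapsto\bigl(\sqrt{1-r^2}\,\rme^{\rmi s},\, r\rme^{\rmi(\theta+hs)}\bigr)\in S^3\subset\C^2, \]
checks that its point preimages are exactly the $Y$-orbits on $\partial V$ (and singletons in the interior), and is done; the price is that this map fails to be differentiable at $r=1$, which the paper then has to discuss in a subsequent remark, and the payoff is that the same explicit formula is recycled later (for the identification of $\ker\oalpha$ with the standard contact structure, and for the ellipsoid models). Your argument is instead purely topological: you use the solid-torus neighbourhood of the binding furnished by \eqref{eqn:nbhd-B2} to recognise the quotient as a Dehn filling of $V$ along the slope $\ell-hm$, and then invoke the genus-one Heegaard classification (or just the fact that two solid tori whose meridians meet once give $S^3$). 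Your reading of the filling slope from \eqref{eqn:nbhd-B2} is right: the $\vartheta$-circle at fixed $b$ maps to $(\tau,\theta,s)=(-\rho^2,b-h\vartheta,\vartheta)$, which winds once in $s$ and $-h$ times in $\theta$, so $m_W=\ell-hm$ and $m_V\cdot m_W=\pm1$. What your approach buys is conceptual transparency — the independence of the answer from $h$ is manifest, and your open-book cross-check via $\pi_0\Diff(D^2,\partial D^2)=1$ is a clean second confirmation — and it works entirely within the smooth category, sidestepping the non-smoothness issue of the paper's explicit map. What it loses is the concrete coordinate description of the quotient map, which the paper genuinely needs downstream; so your lemma proof would have to be supplemented by such a model anyway when the contact-geometric statements are addressed.
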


\begin{proof}
The map
\[\begin{array}{ccc}
S^1\times D^2 & \longrightarrow & S^3\subset\C^2\\
(s;r,\theta)  & \longmapsto     & \bigl(\sqrt{1-r^2}\,\rme^{\rmi s},
                                  r\rme^{\rmi(\theta+hs)}\bigr)
\end{array}\]
is an explicit description of the quotient map.
\end{proof}

\begin{rem}
Observe that the quotient map is not differentiable in~$r=1$.
Thus, strictly speaking, we have shown only that the quotient is
homeomorphic to~$S^3$. Thanks to the existence and uniqueness
of differential structures on topological $3$-manifolds, this
is not something to worry about.

The quotient map in the proof is obtained by parametrising the closed
northern hemisphere $S^2_+$ of $S^2=S^3\cap{(\R\times\C)}$
as the graph of the map $z\mapsto \sqrt{1-|z|^2}$ on the
closed unit disc in the equatorial plane $\{0\}\times\C$, and then rotating
the graph under the $S^1$-action $\rme^{\rmi s}(z_1,z_2)=
(\rme^{\rmi s}z_1,\rme^{\rmi hs}z_2)$.

If instead we parametrise $S^2_+$ by the stereographic projection
of the equatorial unit disc from the south pole, we obtain the
smooth quotient map
\[ (s;r,\theta)\longmapsto\biggl(\frac{1-r^2}{1+r^2}\,\rme^{\rmi s},
\frac{2r}{1+r^2}\,\rme^{\rmi(\theta+hs)}\biggr).\]
There are other quotient maps one could consider, and in what follows we
shall choose one that is adapted to the contact form in question. The
options correspond to different choices of the collar parameter in the
cut construction. We shall elaborate on this issue in
Section~\ref{subsection:nbhd-binding}.
\end{rem}

Now consider the contact form $\alpha=H_s\,\rmd s+\lambda$
on $S^1\times D^2$, subject to the contact condition~(\ref{eqn:contact}).
The contact cut construction yields a contact form $\oalpha$
on $S^3=(S^1\times D^2)/\!\!\sim$.

\begin{lem}
The contact structure $\ker\oalpha$ on $S^3$ is diffeomorphic
to the standard tight contact structure.
\end{lem}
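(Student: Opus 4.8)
The plan is to exhibit an explicit contactomorphism, or at least a homotopy through contact structures, connecting $\ker\oalpha$ to the standard tight contact structure $\xi_{\mathrm{std}}=\ker\alpha_{\mathrm{std}}$ on $S^3$, where $\alpha_{\mathrm{std}}$ is the restriction of $x_1\,\rmd y_1-y_1\,\rmd x_1+x_2\,\rmd y_2-y_2\,\rmd x_2$ to $S^3\subset\C^2$. The key observation is that the class of Hamiltonian functions $H_s$ satisfying Assumption~\ref{ass:H} and the contact condition~(\ref{eqn:contact}) is convex, or at least path-connected: given our $H_s$, I would interpolate linearly to a model Hamiltonian $H^0$ that is autonomous and radial everywhere on $D^2$ (for instance $H^0=h r^2+\text{const}$, which corresponds to the rigid rotation), checking that the contact condition~(\ref{eqn:contact-alt}), namely $r\,\partial_r H_s<2H_s$, is preserved along the straight-line path $H_s^t:=(1-t)H_s+tH^0$ after adding a sufficiently large constant. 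Since the contact condition is an open condition and adding constants only helps (it increases the left-hand-side deficit $2H-r\partial_r H$ by $2\cdot\text{const}$), this interpolation stays inside the contact forms of the form $H_s^t\,\rmd s+\lambda$ throughout.

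Next I would verify that the $S^1$-action generated by $Y=\partial_s-h\partial_\theta$ remains a strict contact action near the boundary for every $H_s^t$ in the family, which follows from Lemma~\ref{lem:invariant} because each $H_s^t$ is autonomous and radial near $\partial D^2$ (both endpoints are, and the linear interpolation of radial-near-boundary functions is again radial near the boundary). Consequently the contact cut of Subsection~\ref{subsection:contactcuts} applies uniformly to the whole family, producing a smooth path $\oalpha^t$ of contact forms on the fixed smooth manifold $S^3=(S^1\times D^2)/\!\!\sim$. By Gray stability, the contact structures $\ker\oalpha^t$ are all diffeomorphic; at $t=1$ the Hamiltonian is the model $H^0$ corresponding to the rigid rotation, and the contact cut in that case is a standard computation. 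The cut of the solid torus with the model contact form $h r^2\,\rmd s+r^2\,\rmd\theta$ (after adjusting the constant) collapsing along $Y$ yields precisely $(S^3,\xi_{\mathrm{std}})$: one can see this by writing the quotient map of Lemma~\ref{lem:quotient} and pulling back $\alpha_{\mathrm{std}}$, matching it with $\oalpha^1$ up to the usual positive conformal factor, since the standard contact form on $S^3$ is exactly the reduction of $\alpha_{\mathrm{std}}$ on $\C^2$ under the Hopf-type $S^1$-action $\rme^{\rmi s}(z_1,z_2)=(\rme^{\rmi s}z_1,\rme^{\rmi hs}z_2)$.

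The main obstacle I anticipate is the \emph{non-smoothness at the binding}: as noted in the remark following Lemma~\ref{lem:quotient}, the naive quotient map is not differentiable at $r=1$, and the smooth structure near $B$ is the one transported through the cut construction~(\ref{eqn:nbhd-B1})--(\ref{eqn:nbhd-B2}), not the naive one. So the identification of $\oalpha^1$ with $\alpha_{\mathrm{std}}$ must be carried out in the cut coordinates, where the collar parameter $\tau$ and the $\C$-factor $z$ with $\mu^{-1}(0)$ given by $\tau=-|z|^2$ encode the correct smooth structure. Concretely I would use the strict contact embedding~(\ref{eqn:strict-embed}) to transport $\oalpha^1$ to the interior and then show the contact form extends smoothly across $B$ to the standard one; the standard contact structure's tightness then follows since \emph{any} contact structure on $S^3$ that is homotopic through contact structures to $\xi_{\mathrm{std}}$ is tight, and tight contact structures on $S^3$ are unique up to isotopy by Eliashberg's classification. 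Alternatively, and perhaps more cleanly, I would appeal directly to the fact that a contact open book whose page is $D^2$ with \emph{trivial} monodromy (the model case, $\psi=\mathrm{id}$) supports the standard tight contact structure on $S^3$, cf.~\cite[Section~4.4.2]{geig08}, and reduce the general case to this via the Gray-stability homotopy above.
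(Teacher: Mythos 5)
Your proposal is correct and follows essentially the same route as the paper: convex linear interpolation of the Hamiltonian (using that the contact condition (\ref{eqn:contact-alt}) is convex in $H_s$) to an autonomous radial model, followed by Gray stability, with the paper's only extra device being to realise the interpolating family of cut forms as restrictions of $\lambda_{\R^4}$ to star-shaped hypersurfaces in $\C^2$, which settles the smoothness at the binding and identifies the endpoint with the round $S^3$ in one stroke. One small correction: $hr^2\,\rmd s+r^2\,\rmd\theta$ is not a contact form (equality holds in (\ref{eqn:contact-alt})), and adding a constant would spoil the normalisation (\ref{eqn:boundary}); the model to interpolate to is $1+(h-1)r^2$, which keeps $H|_{\partial D^2}=h$.
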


\begin{proof}
By the contact condition (\ref{eqn:regular}),
on a collar neighbourhood $\Veps$ of $\partial V$ in
$V=S^1\times D^2$, the function $H_s-hr^2$ is strictly monotonically
decreasing in $r$, and by (\ref{eqn:boundary}) it takes the
value zero on the boundary. In particular, the function is
positive on $\Veps\setminus\partial V$.
Consider the map
\[ \begin{array}{ccc}
\Veps         & \longrightarrow & \C^2\\
(s;r,\theta)  & \longmapsto     & \bigl(\sqrt{H_s-hr^2}\,\rme^{\rmi s},
                                    r\rme^{\rmi(\theta+hs)}\bigr).
\end{array} \]
This, too, is a model for the quotient map $V\rightarrow V/\!\!\sim$
near $\partial V$. Again, the map is not smooth, but its image
is a piece of a smooth star-shaped hypersurface in $\C^2$.
(We expand on this point in Remark~\ref{rem:hyper}.)

The pull-back of the standard Liouville $1$-form
$\lambda_{\R^4}=r_1^2\,\rmd\theta_1+r_2^2\,\rmd\theta_2$ under this map
equals $\alpha|_{\Veps}$. So the restriction
of $\lambda_{\R^4}$ to the hypersurface describes the
contact form on the contact cut $S^3=V/\!\!\sim$ near the
circle $\partial V/\!\!\sim$. Notice that the quotient map
identifies this circle with the unit circle in $\{0\}\times\C$,
no matter what choice of~$H_s$.

For the contact form $\bigl(1+(h-1)r^2\bigr)\,\rmd s+r^2\,\rmd\theta$,
the quotient map is the one in the proof of Lemma~\ref{lem:quotient},
with image $S^3\subset\R^4$.
The contact condition (\ref{eqn:contact-alt}) is convex
in~$H_s$. Thus, the convex linear interpolation between the given $H_s$
and $1+(h-1)r^2$ (and the corresponding interpolation of
starshaped hypersurfaces)
induces a smooth homotopy of contact forms on~$S^3$.
The result then follows from Gray stability~\cite[Theorem~2.2.2]{geig08}.
\end{proof}

\begin{rem}
\label{rem:hyper}
As promised, here is the argument why the image of $\Veps$ under the
quotient map is a smooth star-shaped hypersurface in $\C^2$. 
Smoothness is only an issue near $r=1$. There,
by the inverse function theorem, $r$ is a smooth function of $H_s-hr^2$.
This means that the points
\[ \bigl(\sqrt{H_s-hr^2}\,\rme^{\rmi s}, r\bigr) \]
on the hypersurface, corresponding to $\theta=hs$, form a smooth
surface of revolution, with $\sqrt{H_s-hr^2}$ playing the role of the
radius, and $r$ a function of that radius squared.

The $3$-dimensional hypersurface is then obtained by rotating this
surface under the $S^1$-action $\rme^{\rmi\varphi}(z_1,z_2)=
(z_1,\rme^{\rmi\varphi}z_2)$; in other words, we think of
$\theta$ as $\theta=hs+\varphi$. Since $|z_2|\in (1-\varepsilon,1]$
is bounded away from zero on the surface, this rotation produces a smooth
hypersurface.

To see that the hypersurface is star-shaped with respect to
the origin in $\C^2$, it is enough to observe that
the value of $|z_2|$ of image points increases with
increasing~$r$ (and $s,\theta$ fixed), while that of $|z_1|$
decreases by the contact condition~(\ref{eqn:regular}).
Alternatively, one can reach the same conclusion
by observing that $\lambda_{\R^4}$ pulls back to a contact form
on the hypersurface. Since the radial vector field
$(r_1\partial_{r_1}+r_2\partial_{r_2})/2$ is a Liouville
vector field for $\omega_{\R^4}=\rmd\lambda_{\R^4}$, the hypersurface
mast be transverse to the radial vector field.
\end{rem}
\subsection{Proof of Proposition~\ref{prop:intro}}
The Reeb vector field of $\alpha+x\,\rmd y-y\,\rmd x$, which is simply the
pull-back of the Reeb vector field $R_{\alpha}$ from $V=S^1\times D^2$
to $N\times\C$, descends to the Reeb vector field of the induced contact
form on the $3$-sphere $(S^1\times D^2)/\!\!\sim$. On $\Int(V)$, this
coincides with the old Reeb vector field~$R_{\alpha}$ by the strict contact
embedding~(\ref{eqn:strict-embed}), and hence up to positive
scale with $R=\partial_s+X_s$ by Lemma~\ref{lem:R}. So the inclusion
$\{0\}\times D^2\subset S^1\times D^2$ descends to the desired
embedding $D^2\hookrightarrow S^3$, smooth on $\Int(D^2)$.

On $\partial V$ we have $R=\partial_s+a\partial_{\theta}$
for some $a\in\R$. The contact condition (\ref{eqn:contact}), which we have
seen to be equivalent to $\alpha(R)>0$, translates along the boundary into
$h+a>0$. When we take the quotient of $\partial V$ with respect to
the $S^1$-action generated by $Y=\partial_s-h\partial_{\theta}$,
the vector field $R$ descends to $(h+a)\partial_{\theta}$
on $\partial V/S^1=\partial D^2$. The time $2\pi$ flow of this vector field
coincides with that of $R$, regarded as a map of
$\{0\}\times\partial D^2$ to itself.

This completes the proof of Proposition~\ref{prop:intro}.
\subsection{Contact structures supported by open books}
Let $M$ be a closed, oriented $3$-manifold with an open book
decomposition $\frakp\co M\setminus B\rightarrow S^1$. The standard
orientation of $S^1$ defines a coorientation of the fibres
$\frakp^{-1}(s)$; with the orientation of $M$ this determines
the positive orientation of the pages. The binding $B$
is endowed with the orientation as boundary of the pages.

A contact structure $\xi=\ker\alpha$ on
$M$ is said to be \emph{supported} by the open book if
the following compatibility conditions are satisfied:
\begin{itemize}
\item[(o)] The $3$-form $\alpha\wedge\rmd\alpha$ is a positive
volume form on~$M$.
\item[(i)] The $2$-form $\rmd\alpha$ induces a positive area form on
each fibre of~$\frakp$.
\item[(ii)] The $1$-form $\alpha$ is positive on each component of the
link~$B$.
\end{itemize}

As shown by Giroux~\cite{giro02}, every contact structure on a closed,
oriented $3$-manifold is supported by an open book.

The contact form on $S^3$ constructed in the proof of
Proposition~\ref{prop:intro} is supported by an open book with disc-like
pages. Condition (i) is guaranteed by the transversality of $R$
to the disc factor in $\Int (S^1\times D^2)$. The orientation
condition in (ii) is satisfied thanks to $h+a>0$.
\section{Area-preserving diffeomorphisms of the disc}
\label{section:area-disc}
In this section we want to describe how to make a sufficiently
canonical choice of Hamiltonian function generating any
given area-preserving diffeomorphism of~$D^2$
compactly supported in the interior. This, as mentioned earlier,
is essential for giving us the necessary control over the limit
process in the Fayad--Katok construction.

As before, we fix the area form $\omega=2r\,\rmd r\wedge\rmd\theta$
on $D^2$ with primitive $1$-form $\lambda=r^2\,\rmd\theta$.
Write $\Diffc(D^2,\omega)$ for the group of area-preserving
diffeomorphisms of $D^2$ with compact support in $\Int(D^2)$.
Similarly, $\Diffc(D^2)$ denotes the group of all diffeomorphisms
with the same condition on their support. The space of
area forms on $D^2$ of total area $2\pi$, and which
coincide with $\omega$ near $\partial D^2$, will be denoted
by~$\Omegac(D^2)$.

By Moser stability~\cite[Theorem~3.2.4]{mcsa17}, we have a Serre
fibration
\[ \begin{array}{ccccc}
\Diffc(D^2,\omega) & \longrightarrow & \Diffc(D^2)
    & \stackrel{\sfp}{\longrightarrow} & \Omegac(D^2)\\
                   &                 & f            
    & \longmapsto                      & f^*\omega,
\end{array}\]
cf.\ \cite[Lemma~1.1]{gima17}. The base $\Omegac(D^2)$ of this
fibration is contractible: convex linear interpolation between
any given area form and the base point $\omega$ defines a strong
deformation retraction to $\{\omega\}$. The total space
$\Diffc(D^2)$ is likewise contractible. As proved by
Munkres~\cite{munk60} and, independently, Smale~\cite{smal59}, it admits
a strong deformation retraction to $\{\mathrm{id}_{D^2}\}$.

From the Serre fibration property it then follows that the
fibre $\Diffc(D^2,\omega)$, too, is contractible.
In fact, as claimed in Theorem~\ref{thm:diff}, the fibre
also has $\{\mathrm{id}_{D^2}\}$ as a strong deformation retract.

\begin{proof}[Proof of Theorem~\ref{thm:diff}]
Write
\[ E_s\co \Diffc(D^2)\longrightarrow\Diffc(D^2),\;\;\; s\in [0,1],\]
for the strong deformation retraction of $\Diffc(D^2)$ to
$\{\mathrm{id}_{D^2}\}$, that is,
\begin{itemize}
\item[-] $E_0$ is the identity map on the space $\Diffc(D^2)$.
\item[-] $E_1$ maps the whole space to $\{\mathrm{id}_{D^2}\}$.
\item[-] $E_s(\mathrm{id}_{D^2})=\mathrm{id}_{D^2}$ for all $s\in [0,1]$.
\end{itemize}
Similarly, let
\[ B_t\co\Omegac(D^2)\longrightarrow\Omegac(D^2),\;\;\; t\in [0,1],\]
be the strong deformation retraction of $\Omegac(D^2)$ to~$\{\omega\}$.

Given $\psi\in\Diffc(D^2,\omega)$, the contraction $E_s$
defines a path $s\mapsto E_s(\psi)$ in the larger space $\Diffc(D^2)$.
This maps to a loop $\bigl(\sfp\circ E_s(\psi)\bigr)_{s\in [0,1]}$
in $\Omegac(D^2)$ based at~$\omega$.
The deformation retraction $B_t$ of $\Omegac(D^2)$
then defines a homotopy rel~$\{0,1\}$ from the constant loop at~$\omega$
to that loop $\bigl(\sfp\circ E_s(\psi)\bigr)_{s\in [0,1]}$, where
we take the retraction in backwards time:
\[ (s,t)\longmapsto \omega_{s,t}:=B_{1-t}\circ\sfp\circ E_s(\psi).\]
Notice that $\omega_{s,1}=\sfp\circ E_s(\psi)=E_s(\psi)^*\omega$.
Also, the $\omega_{s,t}$ coincide with $\omega$ in some neighbourhood
of the boundary~$\partial D^2$.

When one applies the Moser stability argument to the homotopy
$t\mapsto\omega_{s,t}$ (for each fixed~$s$), one needs to choose a family
of $1$-forms $\sigma_{s,t}$, compactly supported in $\Int(D^2)$, with
\[ \rmd\sigma_{s,t}=\frac{\rmd}{\rmd t}\omega_{s,t}.\]
Since
\[ \int_{D^2}\frac{\rmd}{\rmd t}\omega_{s,t}=\frac{\rmd}{\rmd t}
\int_{D^2}\omega_{s,t}=0,\]
such forms exist by the Poincar\'e lemma for compactly supported
cohomology. In Lemma~\ref{lem:poincare} below we make this explicit
in order to see that the $\sigma_{s,t}$ can be chosen canonically
and smoothly dependent on $s$ and~$t$.

Define the vector field $X_{s,t}$ on $D^2$ by
\[ \sigma_{s,t}+i_{X_{s,t}}\omega_{s,t}=0.\]
This is compactly supported in $\Int(D^2)$, so its flow $\psi_{s,t}$
(for each fixed~$s$) is defined for all times $t\in [0,1]$. By the usual
Moser argument, see~\cite[p.~108]{mcsa17}, this flow satisfies
$\psi_{s,t}^*\omega_{s,t}=\omega$.
Notice that $\omega_{0,t}=\omega=\omega_{1,t}$ for all $t\in [0,1]$.
This entails $\psi_{0,t}=\mathrm{id}_{D^2}=\psi_{1,t}$.

The map
\[ \begin{array}{rccc}
F_s\co & \Diffc(D^2,\omega) & \longrightarrow & \Diffc(D^2,\omega)\\
       & \psi               & \longmapsto     & E_s(\psi)\circ\psi_{s,1}
\end{array} \]
for $s\in [0,1]$ then defines
the desired strong deformation retraction of $\Diffc(D^2,\omega)$,
since $\psi_{s,1}^*E_s(\psi)^*\omega=\psi_{s,1}^*\omega_{s,1}=\omega$.
\end{proof}

It remains to discuss the canonical choice of the $1$-forms
$\sigma_{s,t}$. Here is the relevant version of the Poincar\'e
lemma for compactly supported forms. It shows that the $\sigma_{s,t}$
depend only on an \emph{a priori} choice of a bump function.
For simplicity of notation, we work on the unit square $I^2$,
with $I:=[0,1]$, instead of the unit disc.

\begin{lem}
\label{lem:poincare}
Choose a bump function $y\mapsto\chi (y)$ on $I$, compactly supported
in $\Int(I)$, with $\int_I\chi(y)\,\rmd y=1$.
Let $\eta=g(x,y)\,\rmd x\wedge\rmd y$ be a $2$-form on $I^2$
with $g$ compactly supported in $\Int(I^2)$ and $\int_{I^2}\eta=0$. Set
\begin{eqnarray*}
a(x) & := & \int_0^1 g(x,y)\,\rmd y,\\
b(x) & := & \int_0^x a(s)\,\rmd s,\\
u(x,y) & := & -g(x,y)+a(x)\chi(y),\\
v(x,y) & := & \int_0^y u(x,t)\,\rmd t.
\end{eqnarray*}
Then the $1$-form
\[ \beta:= v(x,y)\,\rmd x+b(x)\chi(y)\,\rmd y\]
is compactly supported in $\Int(I^2)$ and satisfies $\rmd\beta=\eta$.
\end{lem}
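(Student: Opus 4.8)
The plan is to establish the two assertions separately: the identity $\rmd\beta=\eta$, which is a direct computation, and the compact support of $\beta$ in $\Int(I^2)$, which is the real content and the only place where the two normalisation hypotheses $\int_{I^2}\eta=0$ and $\int_I\chi=1$ are actually consumed.

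First I would compute the exterior derivative. Writing $\beta=v\,\rmd x+b\chi\,\rmd y$, we have
\[ \rmd\beta=\Bigl(\frac{\partial(b\chi)}{\partial x}-\frac{\partial v}{\partial y}\Bigr)\,\rmd x\wedge\rmd y. \]
By the fundamental theorem of calculus, $\partial v/\partial y=u=-g+a\chi$, while $b'=a$ gives $\partial(b\chi)/\partial x=a\chi$. Subtracting, the two copies of $a\chi$ cancel and one is left with $g\,\rmd x\wedge\rmd y=\eta$. This step is purely mechanical and poses no difficulty; it is, in effect, the classical iterated-integral proof of the Poincar\'e lemma, organised so that the primitive comes out explicitly.

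The main obstacle will be the support statement, precisely because one must verify vanishing near all four edges of the square and the two subtle edges are the ones that force the hypotheses into play. I would fix $\delta>0$ so that $g$ vanishes whenever either of its arguments lies in $[0,\delta]\cup[1-\delta,1]$ and $\chi$ vanishes outside $[\delta,1-\delta]$. Near $x\le\delta$ the vanishing of both coefficients is immediate, since there $a(x)=\int_0^1 g(x,y)\,\rmd y=0$, whence $b(x)=0$ and $u(x,\cdot)=0$, so $v(x,\cdot)=0$; and near $y\le\delta$ the integrand $u(x,t)$ vanishes on $[0,y]$, killing $v$, while $\chi(y)=0$ kills $b\chi$.

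The two delicate edges are $x=1$ and $y=1$, and these are exactly where I expect the argument to earn its keep. For $x\ge 1-\delta$ one still has $a(x)=0$, so $v(x,\cdot)=0$, but now $b(x)=\int_0^x a(s)\,\rmd s=\int_0^1 a(s)\,\rmd s=\int_{I^2}\eta$, which vanishes \emph{precisely} by the hypothesis $\int_{I^2}\eta=0$. Dually, for $y\ge 1-\delta$ one computes $v(x,y)=\int_0^1 u(x,t)\,\rmd t=-a(x)+a(x)\int_I\chi$, and this vanishes \emph{precisely} because $\int_I\chi=1$; the factor $\chi(y)=0$ again kills $b\chi$ there. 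Assembling these four edge computations shows that both $v$ and $b\chi$ are compactly supported in $\Int(I^2)$, which together with $\rmd\beta=\eta$ completes the proof.
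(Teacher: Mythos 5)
Your proof is correct and follows essentially the same route as the paper's: the identity $\rmd\beta=\eta$ by direct differentiation, and compact support from $\int_I a(x)\,\rmd x=\int_{I^2}\eta=0$ (for $b$) and $\int_I u(x,y)\,\rmd y=-a(x)+a(x)\int_I\chi=0$ (for $v$). You have merely written out the edge-by-edge verification that the paper leaves implicit.
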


\begin{proof}
The fact that the functions $b$ and $v$ are compactly supported
in $I$ and $I^2$, respectively, follows from $\int_I a(x)\,\rmd x=0$
and $\int_I u(x,y)\,\rmd y=0$. The computation showing that $\beta$ is a
primitive of~$\eta$ is straightforward.
\end{proof}

We now want to show how the strong deformation retraction of
Theorem~\ref{thm:diff} translates into a canonical choice
of Hamiltonian function generating a given $\psi\in\Diffc(D^2,\omega)$.
Up to some sign changes and a little care concerning the
boundary behaviour, this is exactly the argument in
\cite[Proposition~9.3.1]{mcsa17}.
We shall assume that the strong deformation retraction
$F_s$ has been chosen as a \emph{technical} homotopy, i.e.\
$F_s$ is the identity map on $\Diffc(D^2,\omega)$
for $s$ near~$0$, and $F_s\equiv\mathrm{id}_D^2$ for $s$ near~$1$.

Given $\psi\in\Diffc(D^2,\omega)$, we define the path $s\mapsto\psi_s:=
F_{1-s}(\psi)$ in $\Diffc(D^2,\omega)$ from $\mathrm{id}_{D^2}$
to~$\psi$. Define the vector field $X_s$ on $D^2$ by
\[ \frac{\rmd}{\rmd s}\psi_s=X_s\circ\psi_s.\]
This vector field is compactly supported in $\Int(D^2)$, and $X_s\equiv 0$
for $s$ near $0$ or~$1$. There is a unique function $G_s\co D^2\rightarrow\R$
that is compactly supported in $\Int(D^2)$ and satisfies
\[ \psi_s^*\lambda-\lambda=\rmd G_s.\]
The function
\[ H_s:=-\lambda (X_s)+\Bigl(\frac{\rmd}{\rmd s}G_s\Bigr)\circ\psi_s^{-1}.\]
is compactly supported in $\Int(D^2)$, and it is identically
zero for $s$ near $0$ or~$1$, so it may be regarded as a $1$-periodic
function in~$s$. One then computes that $\rmd H_s=i_{X_s}\omega$,
so $\psi_s$ is the Hamiltonian isotopy generated by~$H_s$.
\section{Pseudorotations}
\label{section:pseudorot}
We now want to prove Theorem~\ref{thm:main} by performing a
limit process in the argument for proving Proposition~\ref{prop:intro}.
To this end, we need to describe pseudorotations as Hamiltonian
diffeomorphisms.
\subsection{Hamiltonian description of pseudorotations}
\label{subsection:pseudo-hamil}
Write $\frakR_a$ for the rotation of $D^2$ through an angle~$2\pi a$.
As mentioned in the introduction, the irrational pseudorotations
constructed by Fayad--Katok~\cite{faka04} are $C^{\infty}$-limits
\[ \lim_{\nu\rightarrow\infty}\varphi_{\nu}\circ\frakR_{p_{\nu}/q_{\nu}}
\circ\varphi_{\nu}^{-1},\]
where $(p_{\nu}/q_{\nu})_{\nu\in\N}$ is a sequence of rational numbers,
which we take to be positive,
converging sufficiently fast to a (Liouvillean) irrational
number, and the $\varphi_{\nu}$ are area-preserving
diffeomorphisms of $D^2$. Each $\varphi_{\nu}$ is the identity
on a neighbourhood of~$\partial D^2$. For $\nu\rightarrow\infty$, that
neighbourhood shrinks to~$\partial D^2$. The most relevant statements can
be found in Theorem~3.3 and Lemma~3.5 of~\cite{faka04}.

By the preceding section, where we now take our Hamiltonian isotopies to
be para\-metrised on the interval $[0,2\pi]$, we can write the
area-preserving diffeomorphism
\[ \varphi_{\nu}\circ\frakR_{p_{\nu}/q_{\nu}}\circ\varphi_{\nu}^{-1}
\circ\frakR_{p_{\nu}/q_{\nu}}^{-1}\in\Diffc(D^2,\omega)\]
in a canonical fashion as the time $2\pi$ map of a Hamiltonian isotopy
$\Psi_s^{\nu}$ generated by a $2\pi$-periodic Hamiltonian
function $K^{\nu}_s$ with compact support in $\Int(D^2)$. The rotation
$\frakR_{p_{\nu}/q_{\nu}}$ is the time $2\pi$ map of the Hamiltonian
isotopy generated by the function
\begin{equation}
\label{eqn:R-hamil}
R^{\nu}:=h+\frac{p_{\nu}}{q_{\nu}}-\frac{p_{\nu}}{q_{\nu}}r^2,
\end{equation}
where $h$ is chosen as a large natural number.
By the well-known formula for composing Hamiltonian diffeomorphisms,
see~\cite[Exercise~3.1.14]{mcsa17}, the diffeomorphism
\[ \psi_{\nu}:=\varphi_{\nu}\circ\frakR_{p_{\nu}/q_{\nu}}\circ
\varphi_{\nu}^{-1}=\Psi_{2\pi}^{\nu}\circ\frakR_{p_{\nu}/q_{\nu}}\]
is the time $2\pi$ map of the Hamiltonian isotopy generated by
\begin{equation}
\label{eqn:H}
H_s^{\nu}:= K_s^{\nu}+R^{\nu}\circ\bigl(\Psi_s^{\nu}\bigr)^{-1}.
\end{equation}
Observe that this Hamiltonian satisfies Assumption~\ref{ass:H}
and the boundary condition~(\ref{eqn:boundary}), since
$H_s^{\nu}=R^{\nu}$ near $r=1$.

By the discussion in the preceding section, the fact that the pseudorotation
$\psi:=\lim_{\nu\rightarrow\infty}\psi_{\nu}$ is a $C^{\infty}$-limit
implies that we have a $C^{\infty}$-limit
$H_s^{\infty}:=\lim_{\nu\rightarrow\infty}H_s^{\nu}$, and $\psi$
equals the time $2\pi$ map of the Hamiltonian isotopy generated by
this limit Hamiltonian~$H_s^{\infty}$.
The smoothness of $H_s^{\infty}$  follows from the fact that the
pointwise convergence $f_{\nu}\rightarrow f$ of a sequence
$(f_{\nu})_{\nu\in\N}$ of
$C^1$-functions and the uniform convergence of its partial
derivatives $\partial f_{\nu}/\partial x_i\rightarrow f_i$ implies,
by the fundamental theorem of calculus, that the limit function
$f$ is a $C^1$-function with $\partial f/\partial x_i=f_i$.
\subsection{The cut construction for circle actions on the boundary}
Our aim will be to show that the contact cut construction
in Section~\ref{section:cuts} can be performed for the contact
form $\alpha_{\infty}:=H_s^{\infty}\,\rmd s +\lambda$ on $V=S^1\times D^2$.
Notice that $H_s^{\infty}$ still satisfies the
boundary condition~(\ref{eqn:boundary}), but it violates
Assumption~\ref{ass:H}, in general.

This means that the $S^1$-action on $\partial (S^1\times D^2)$
defined by $Y=\partial_s-h\partial_{\theta}$
may not extend to a strict contact $S^1$-action on a collar
neighbourhood of the boundary. However, since the cut construction only
affects the boundary, one can sometimes perform a cut even
when the $S^1$-action does not extend. As we shall see, this is the
case here.

\begin{rem}
Topologically, one can always extend an $S^1$-action on the boundary
to one on a collar neighbourhood, and hence perform
a cut. In the symplectic setting,
one can appeal to an equivariant coisotropic embedding theorem
and conclude likewise, see~\cite[Proposition~2.7]{lerm01}.

In the contact setting, Giroux's neighbourhood theorem for
surfaces in contact $3$-manifolds, see~\cite[Theorem~2.5.22]{geig08},
or its higher-dimensional analogue~\cite[Proposition~6.4]{dige12},
gives an extension of the $S^1$-action to one
preserving only the contact \emph{structure}. By averaging the
contact form, one may assume it to be $S^1$-invariant, but this
would of course alter the Reeb dynamics.
\end{rem}

Since we are interested in preserving the Reeb dynamics on
$\Int(V)$, we shall explicitly analyse the $1$-form
on the quotient $V/\!\!\sim$ induced by the
contact form~$\alpha_{\infty}$ near the binding
$B:=\bigl(\partial(S^1\times D^2)\bigr)/S^1\cong\partial D^2$ and
discuss its extendability to the binding.
\subsection{The neighbourhood of the binding}
\label{subsection:nbhd-binding}
The diffeomorphism (\ref{eqn:nbhd-B2}) from
Sec\-tion~\ref{subsection:open-book},
for $\Sigma= D^2$, gives us an embedding $\Phi$ of a pointed
neighbourhood of the binding $B\cong\partial D^2$ into the interior of the
solid torus~$V=S^1\times D^2$. This embedding is given
by setting $\tau=-\rho^2$, so it depends on a choice of
collar parameter~$\tau=\tau(r,s,\theta)$. This function 
should be chosen to be invertible with respect to~$r$, that is, we
require that $r$ can be written as a smooth function $r=g(\tau,s,\theta)$.

Then the embedding $\Phi$ takes the form
\begin{equation}
\begin{array}{rccc}
\Phi\co & B\times
        \bigl(\Int(D^2_{\sqrt{\varepsilon}})\setminus\{0\}\bigr)
           & \longrightarrow & \Int(S^1\times D^2)\\[.5mm]
        & (b;\rho,\vartheta)
           & \longmapsto     & \left\{\begin{array}{l}
                               s=\vartheta;\\
                               r=g(-\rho^2,\vartheta,b-h\vartheta),\\
                               \theta=b-h\vartheta.
                               \end{array}\right.
\end{array}
\end{equation}

One obvious choice for the collar parameter is $\tau=r-1$.
Alternatively, one can choose
a collar parameter adapted to the contact form $\alpha=H_s\,\rmd s+\lambda$.
Here the natural collar parameter to use is the one coming from
the momentum map
\[ \mu_V=\alpha(Y)=H_s-hr^2.\]
The collar parameter $\tau$ would simply be the negative of that.

Thus, when we consider the sequence of contact forms
$\alpha_{\nu}:=H_s^{\nu}\,\rmd s+\lambda$ with limit
$\alpha_{\infty}$ on $S^1\times D^2$, we could opt to work
with a fixed collar parameter, or with one that changes with
each element in the sequence. We shall briefly describe the advantages
of either choice.
\subsubsection{Collar parameter depending on~$\alpha$}
\label{subsubsection:collar1}
We first consider the collar parameter
\[ \tau=\tau(r,s,\theta)=hr^2-H_s(r,\theta)\]
adapted to the contact form $\alpha=H_s\,\rmd s+\lambda$.
We have
\[ \frac{\partial\tau}{\partial r}\Big|_{r=1}=
2h-\frac{\partial H_s}{\partial r}\Big|_{r=1}>0\]
by the contact condition~(\ref{eqn:contact-alt}).
This means that near $r=1$, we can write $r$ as a smooth function
$r=g(\tau,s,\theta)$. Then
\[ \Phi^*\alpha=\bigl(g(-\rho^2,\vartheta,b-h\vartheta)\bigr)^2\,
(\rmd b-h\,\rmd\vartheta)+(H_s\circ\Phi)\,\rmd\vartheta.\]
We have
\[ (H_s-hr^2)\circ\Phi(b,\rho,\vartheta)=
-\tau\bigl(g(-\rho^2,\vartheta,b-h\vartheta),\vartheta,b-h\vartheta\bigr)
=\rho^2,\]
hence
\[ \Phi^*\alpha=\bigl(g(-\rho^2,\vartheta,b-h\vartheta)\bigr)^2\,
\rmd b+\rho^2\,\rmd\vartheta.\]
The second summand obviously extends smoothly over the binding
$\{\rho=0\}$, so the only question is whether the function
\[ (b;\rho,\vartheta)\longmapsto \bigl(g(-\rho^2,\vartheta,b-h\vartheta)
\bigr)^2\]
extends smoothly. When it does, the extended $1$-form is
easily seen to be a contact form.

Notice that in the case where the Hamiltonian function satisfies
Assumption~\ref{ass:H}, in which case the $S^1$-action on the boundary of
the solid torus extends to a collar neighbourhood,
$\tau$ is a function of $r$ only near $r=1$,
and hence $g$ is a function of $\rho$ only. So in this case the
contact form extends, which is of course not surprising, since this
is what the cut construction tells us.
\subsubsection{Collar parameter independent of~$\alpha$}
\label{subsubsection:collar2}
When we take $\tau=r-1$ as collar parameter, the function $g$ is
simply given by $r=1+\tau$, so $\Phi$ takes the form
\begin{equation}
\label{eqn:Phi}
\Phi\co (b;\rho,\vartheta) \longmapsto
\left\{\begin{array}{l}
s=\vartheta;\\
r=1-\rho^2,\\
\theta=b-h\vartheta.\end{array}\right.
\end{equation}
It follows that
\begin{equation}
\label{eqn:Phi*}
\Phi^*\alpha=(1-\rho^2)^2\,\rmd b+
\bigl(H_s\circ\Phi-h(1-\rho^2)^2\bigr)\,\rmd\vartheta.
\end{equation}
Now the extension problem is located in the second summand,
and the dependence on $H_s$ is more transparent than with the choice
made in Section~\ref{subsubsection:collar1}, where this dependence
is hidden in the function~$g$.
\subsection{Ellipsoids}
\label{subsection:ellipsoids}
Consider the Hamiltonian function $H(r\rme^{\rmi\theta})=a_2r^2+a_0$
with $a_0,a_2\in\R$, $a_0>0$, and $a_0+a_2=h\in\N$.
This satisfies the contact condition~(\ref{eqn:contact-alt}).
The function $H$ defines the Hamiltonian vector field
$X=-a_2\partial_{\theta}$, and the
Reeb vector field of the contact form $\alpha=H\,\rmd s+\lambda$
is $R_{\alpha}=(\partial_s+X)/a_0$.
We compute
\[ \Phi^*\alpha=(1-\rho^2)^2\,\rmd b+a_0(2-\rho^2)\rho^2\,\rmd\vartheta;\]
this formula also defines the extension of $\Phi^*\alpha$
as a contact form $\halpha$ over $\rho=0$. The Reeb vector field
of $\halpha$ is $R_{\halpha}=\partial_b+\partial_{\vartheta}/a_0$.
In cartesian coordinates $u+\rmi v=\rho\rme^{\rmi\vartheta}$
we have $\partial_{\vartheta}=
u\partial_v-v\partial_u$, so along the binding $B$ the Reeb vector
field equals~$\partial_b$.
Notice that if we fix $a_0$ and allow $a_2$ to vary (by integers),
the dynamics around the periodic Reeb orbit corresponding to
the fixed point $0\in D^2$ changes, while the one around the
periodic orbit $B$ does not.

This example gives the intrinsic description of the Reeb flow
on ellipsoids in~$\R^4$. Apart from the two periodic orbits just
mentioned, we have a foliation by $2$-tori, which in turn are linearly
foliated by Reeb orbits. Depending on $a_0$ being rational or not,
the Reeb orbits on these tori are periodic or dense.

Indeed, we can adapt the quotient map in the proof of Lemma~\ref{lem:quotient}
to this example. Consider the ellipsoid
\[ E_{a_0}:=\Bigl\{(z_1,z_2)\in\C^2\co
\frac{|z_1|^2}{a_0}+|z_2|^2=1\Bigr\}.\]
The quotient map
\[ \begin{array}{rccc}
\Psi\co & S^1\times D^2 & \longrightarrow & E_{a_0}\\
        & (s;r,\theta)  & \longmapsto     & \bigl(\sqrt{a_0(1-r^2)}
                                            \,\rme^{\rmi s},
                                            r\rme^{\rmi(\theta+hs)}\bigr)
\end{array}\]
pulls back the standard contact form $r_1^2\,\rmd\theta_1+r_2^2\,\rmd\theta_2$
on $E_{a_0}$ to
\[ \Psi^*(r_1^2\,\rmd\theta_1+r_2^2\,\rmd\theta_2)=
(a_0+a_2r^2)\,\rmd s+r^2\,\rmd\theta=\alpha,\]
and $T\Psi(R_{\alpha})=\partial_{\theta_1}/a_0+\partial_{\theta_2}$.
\subsection{The extension problem}
\label{subsection:extension}
We now return to the irrational pseudorotations of Fayad--Katok. Thus,
from now on the Hamiltonian functions $H_s^{\nu}$, $\nu\in\N$,
and their $C^{\infty}$-limit $H_s^{\infty}$ are those corresponding
to an irrational pseudorotation, as found in
Section~\ref{subsection:pseudo-hamil}.

We choose to work with a fixed collar parameter as in
Section~\ref{subsubsection:collar2}. Then the question
whether $\Phi^*\alpha_{\infty}$ extends as
a smooth $1$-form to the binding $B$ reduces to the following statement.

\begin{prop}
\label{prop:smooth}
The function  $f\co B\times\Int(D^2_{\sqrt{\varepsilon}})\rightarrow\R$
defined by
\begin{equation}
\label{eqn:f}
f(b,\rho\rme^{\rmi\vartheta}):=
\begin{cases}
\bigl(H_s^{\infty}\circ\Phi-h(1-\rho^2)^2\bigr)/\rho^2
       & \text{for $\rho\neq 0$},\\
2(h+a) & \text{for $\rho=0$},
\end{cases}
\end{equation}
where $a=\lim_{\nu\rightarrow\infty}p_{\nu}/q_{\nu}$, is smooth.
\end{prop}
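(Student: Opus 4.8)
The plan is to reduce everything to an analysis of the $\infty$-jet of $H_s^{\infty}$ along the boundary circle $r=1$. Set $R^{\infty}:=h+a-ar^2=\lim_{\nu\to\infty}R^{\nu}$, the $C^{\infty}$-limit of the rotation Hamiltonians (\ref{eqn:R-hamil}), which exists since $p_{\nu}/q_{\nu}\to a$. The first step is to observe that $H_s^{\infty}$ and $R^{\infty}$ have the same $\infty$-jet at $r=1$. Indeed, for each fixed $\nu$ we have $H_s^{\nu}=R^{\nu}$ on a whole (shrinking) neighbourhood of $r=1$, as already noted after (\ref{eqn:H}); in particular the full Taylor expansions of $H_s^{\nu}$ and $R^{\nu}$ in $(r,\theta,s)$ agree at $r=1$. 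Passing to the $C^{\infty}$-limit, all partial derivatives at $r=1$ converge, whence $\partial^{\beta}H_s^{\infty}|_{r=1}=\partial^{\beta}R^{\infty}|_{r=1}$ for every multi-index $\beta$. Consequently the difference $D:=H_s^{\infty}-R^{\infty}$ is \emph{flat} at $r=1$: the function $D$ and all its partial derivatives in $(r,\theta,s)$ vanish there.

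With this splitting $H_s^{\infty}=R^{\infty}+D$ I would treat the two summands of $f$ separately. The contribution of $R^{\infty}$ is explicit: since $R^{\infty}$ depends only on $r$, substituting $r=1-\rho^2$ from (\ref{eqn:Phi}) and expanding gives
\[ R^{\infty}\circ\Phi-h(1-\rho^2)^2=(h+a)\,\rho^2(2-\rho^2),\]
so that this part of $f$ equals $(h+a)(2-\rho^2)$, a smooth function taking the value $2(h+a)$ at $\rho=0$; this already accounts for the asserted boundary value. It therefore remains to show that the remaining term $(D\circ\Phi)/\rho^2$ extends smoothly across the binding $\{z=0\}$ with value $0$, that is, that $D\circ\Phi$ is smooth and flat at $z=0$ and that flatness survives division by $\rho^2=|z|^2$.

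The heart of the argument, and the main obstacle, is the smoothness of $D\circ\Phi$ at $z=0$, because through (\ref{eqn:Phi}) the composition involves $s=\vartheta=\arg z$ and $\theta=b-h\vartheta$, which are singular at the origin; away from $z=0$ the composition is of course smooth. The point is that flatness of $D$ beats this singularity. Since every partial derivative $\partial^{\beta}D$ is again flat at $r=1$, Taylor's theorem with integral remainder (using compactness in the $\theta,s$-directions) yields estimates $|\partial^{\beta}D|\le C_{\beta,N}(1-r)^N=C_{\beta,N}\rho^{2N}$ near $r=1$, valid for every $N$. On the other hand, the components of $\Phi$ are $r=1-\rho^2$, with bounded derivatives in $(b,u,v)$ where $u+\rmi v=z$, together with the angular functions $\vartheta$ and $b-h\vartheta$, whose order-$k$ derivatives are $O(\rho^{-k})$. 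A Fa\`a di Bruno expansion then writes any order-$m$ derivative of $D\circ\Phi$ as a finite sum of products, each carrying a factor $(\partial^{\beta}D)\circ\Phi$ with $|\beta|\ge1$ times inner-derivative factors of total order $m$; each such term is $O(\rho^{2N}\cdot\rho^{-m})$, which tends to $0$ as $z\to0$ once $2N>m$. Hence $D\circ\Phi$, extended by $0$ at $z=0$, together with all its partial derivatives extends continuously by $0$, so by the standard removable-singularity criterion $D\circ\Phi$ is $C^{\infty}$ and flat at $z=0$. Finally, dividing a function flat at $z=0$ by $|z|^2$ preserves smoothness and flatness: by the Leibniz rule each derivative of $(D\circ\Phi)/|z|^2$ is a sum of terms $\partial^{\gamma}(D\circ\Phi)\cdot\partial^{\delta}(|z|^{-2})$ with $\partial^{\delta}(|z|^{-2})=O(|z|^{-2-|\delta|})$, again dominated by the super-polynomial decay of $\partial^{\gamma}(D\circ\Phi)$. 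Combining the two summands shows that $f$ is smooth with $f(b,0)=2(h+a)$, as claimed.
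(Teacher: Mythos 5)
Your proposal is correct, and it uses the same underlying decomposition as the paper: both arguments subtract the quadratic model (your $R^{\infty}$ contribution $(h+a)(2-\rho^2)$ is exactly the function $(h+a)(2-x^2-y^2)$ the paper splits off at the end of Section~\ref{subsection:higher}) and reduce everything to the fact that the remainder has vanishing $\infty$-jet along the binding. Where you genuinely diverge is in how you show that a flat remainder descends smoothly across $\{z=0\}$. The paper stays in the lifted polar picture throughout: it first proves smoothness of the lift $\tf$ via the Morse division lemma (Lemma~\ref{lem:smooth}), records its jet at $\rho=0$ (Lemma~\ref{lem:rho=0}), and then descends by analysing the structure of Cartesian derivatives in polar coordinates (Lemma~\ref{lem:u}), with the limits handled by l'H\^opital and an explicit uniformity-of-double-limits argument (Lemmas~\ref{lem:double} and~\ref{lem:uniform}). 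You instead work directly in Cartesian coordinates: the quantitative flatness estimate $|\partial^{\beta}D|\leq C_{\beta,N}(1-r)^N=C_{\beta,N}\rho^{2N}$, combined with the Fa\`a di Bruno bound $O(\rho^{-m})$ on the inner derivatives coming from $\arg z$, shows every derivative of $D\circ\Phi$ decays faster than any power of~$\rho$, after which the removable-singularity criterion and a Leibniz estimate absorb the division by $|z|^2$. Your route is shorter and makes the mechanism (``flatness beats the polar singularity'') quantitatively explicit; it also bypasses the intermediate smoothness of the lift entirely. Its cost is reliance on the multivariate Fa\`a di Bruno formula and on the removable-singularity criterion, which you should state precisely (continuity of the extension at each inductive stage is what licenses the mean-value/segment argument, and the singular set here is the circle $B\times\{0\}$ rather than a point, though this causes no difficulty). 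The paper's route is more elementary and self-contained, at the price of the case-by-case limit computations in Sections 4.7 and 4.8.
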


The $1$-form $\Phi^*\alpha_{\infty}$ then extends smoothly over $\rho=0$ as
\[ \halpha:=(1-\rho^2)^2\,\rmd b+f\rho^2\,\rmd\vartheta.\]

The functions $f^{\nu}$, defined as in Proposition~\ref{prop:smooth},
with $H_s^{\infty}$ replaced by $H_s^{\nu}$ and $a$ by $p_{\nu}/q_{\nu}$,
are easily shown to be smooth, see Lemma~\ref{lem:smooth} below.

\begin{lem}
\label{lem:h+a}
When $h\in\N$ in the above construction is chosen such that $h+a>0$,
the extended $1$-form $\halpha$ is a contact form,
and $B\times\{0\}$ is a (positively oriented) Reeb orbit.
\end{lem}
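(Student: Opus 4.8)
The plan is to verify the two assertions of Lemma~\ref{lem:h+a} in turn: first that $\halpha$ is a contact form, and second that $B\times\{0\}=\{\rho=0\}$ is a positively oriented Reeb orbit. For the contact condition, I would compute $\halpha\wedge\rmd\halpha$ directly from the explicit formula
\[ \halpha=(1-\rho^2)^2\,\rmd b+f\rho^2\,\rmd\vartheta,\]
where $f$ is the smooth function from Proposition~\ref{prop:smooth}. On the complement of the binding, $\rho\neq 0$, the form $\Phi^*\alpha_{\infty}$ is just the pull-back under the diffeomorphism $\Phi$ of the contact form $\alpha_{\infty}=H_s^{\infty}\,\rmd s+\lambda$, which satisfies the contact condition because $H_s^{\infty}$ still obeys the boundary condition~(\ref{eqn:boundary}) and the contact condition~(\ref{eqn:contact}) holds on $\Int(V)$; hence $\halpha$ is automatically contact away from $\rho=0$. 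So the only point that needs genuine checking is contactness \emph{at} the binding $\{\rho=0\}$, and there the calculation simplifies drastically because $f(b,0)=2(h+a)$ is the prescribed boundary value.

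Concretely, I would expand $\halpha\wedge\rmd\halpha$ and evaluate it at $\rho=0$. Writing $\rho^2\,\rmd\vartheta$ in cartesian coordinates $u+\rmi v=\rho\rme^{\rmi\vartheta}$ as (up to the obvious factor) $u\,\rmd v-v\,\rmd u$, one sees that $\rmd(f\rho^2\,\rmd\vartheta)$ at $\rho=0$ picks up the leading term $f(b,0)\,\rmd u\wedge\rmd v=2(h+a)\,\rmd u\wedge\rmd v$, while the $\rmd b$-component of $\halpha$ contributes its value $(1-0)^2=1$ along the binding. The wedge $\halpha\wedge\rmd\halpha$ at $\rho=0$ therefore reduces to a positive multiple of $2(h+a)\,\rmd b\wedge\rmd u\wedge\rmd v$, which is a positive volume form precisely when $h+a>0$. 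This is where the hypothesis on $h$ enters, and identifying the correct leading coefficient is the one computation that requires care; everything else is continuity from the interior.

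For the Reeb statement, I would compare with the model computed in Section~\ref{subsection:ellipsoids}. The extended form $\halpha$ has exactly the same structure as the ellipsoidal model $\widehat{\alpha}=(1-\rho^2)^2\,\rmd b+a_0(2-\rho^2)\rho^2\,\rmd\vartheta$ there, whose Reeb field restricts along $B$ to $\partial_b$; since the two agree to the relevant order at $\rho=0$ (the $\rmd b$-coefficient equals $1$ and the coefficient of $\rmd u\wedge\rmd v$ equals $2(h+a)>0$ in both), the Reeb vector field of $\halpha$ along $\{\rho=0\}$ is likewise a positive multiple of $\partial_b$. Thus $B\times\{0\}$ is a closed Reeb orbit, traversed in the direction of increasing $b$. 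Its positive orientation as a component of the binding then follows from the orientation conventions recorded in the discussion of supported open books, exactly as in the proof of Proposition~\ref{prop:intro} where the boundary condition took the form $h+a>0$.

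I expect the only real obstacle to be bookkeeping at the binding: the function $f$ is smooth but nonconstant, so I must confirm that the transverse derivatives of $f$ do not spoil contactness or contribute to the leading term of $\halpha\wedge\rmd\halpha$ along $\rho=0$. The key observation making this harmless is that those extra terms carry factors of $\rho^2$ (from the coefficient $f\rho^2$) or of $\rho$, and so vanish on the binding, leaving only the value $f(b,0)=2(h+a)$ to control the sign. Once this is in hand, both claims of the lemma reduce to the single inequality $h+a>0$.
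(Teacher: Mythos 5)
Your proposal is correct and follows essentially the same route as the paper: one verifies the contact condition only along $\{\rho=0\}$ by computing $\halpha\wedge\rmd\halpha$ there, where the derivative terms of $f$ drop out because they carry vanishing factors of $\rho$, leaving a positive multiple of $f|_{\rho=0}=2(h+a)$, and one identifies $\partial_b$ as the Reeb field along the binding since $\halpha(\partial_b)|_{\rho=0}=1$ and $i_{\partial_b}\rmd\halpha|_{\rho=0}=0$. The only cosmetic differences are your use of cartesian coordinates and the comparison with the ellipsoid model for the Reeb statement (the paper computes directly), plus a harmless missing factor of $2$ from $\rmd(u\,\rmd v-v\,\rmd u)=2\,\rmd u\wedge\rmd v$ that does not affect the sign.
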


\begin{proof}
The contact condition needs to be verified along $B\times\{0\}$.
We have
\[ \rmd\halpha=-4(1-\rho^2)\rho\,\rmd\rho\wedge\rmd b
+2f\rho\,\rmd\rho\wedge\rmd\vartheta +\rmd f\wedge\rho^2\,\rmd\vartheta,\]
and hence
\[ \halpha\wedge\rmd\halpha|_{\rho=0}=2f\,\rmd b\wedge
\rho\,\rmd\rho\wedge\rmd\vartheta>0,\]
provided that $f|_{B\times\{0\}}>0$.
Moreover, we have $\halpha(\partial_b)|_{\rho=0}=1$
and $i_{\partial_b}\rmd\halpha|_{\rho=0}=0$, so
$\partial_b$ is the Reeb vector field of $\halpha$
along $B\times\{0\}$.
\end{proof}

\begin{rem}
The condition $h+p_{\nu}/q_{\nu}>0$ is precisely the contact
condition (\ref{eqn:contact-alt}) for the $1$-form $R^{\nu}\,\rmd s+\lambda$,
where $R^{\nu}$ is the standard quadratic Hamiltonian
in~(\ref{eqn:R-hamil}). So the condition $h+a$ in the lemma
is simply saying that the strict inequality should also hold
in the limit $\nu\rightarrow\infty$.
\end{rem}

Thus, in order to demonstrate Theorem~\ref{thm:main},
it only remains to prove Proposition~\ref{prop:smooth}. The further
statements in Theorem~\ref{thm:main}, apart from the
dynamical convexity, then follow as in the proof
of Proposition~\ref{prop:intro} in Section~\ref{section:cuts}.

The embedding $\Phi$ in (\ref{eqn:Phi}) extends to a smooth map
\[ \begin{array}{rccc}
\tPhi\co & B\times[0,\sqrt{\varepsilon}\,)\times S^1
    & \longrightarrow & S^1\times D^2\\
         & (b,\rho,\vartheta)
    & \longmapsto     & \bigl(\vartheta,(1-\rho^2)
                        \rme^{\rmi(b-h\vartheta)}\bigr).
\end{array} \]
The function
\[ \begin{array}{rccc}
\tf\co & B\times[0,\sqrt{\varepsilon}\,)\times S^1
                   & \longrightarrow & \R\\
       & (b,\rho,\vartheta)
                   & \longmapsto & f(b,\rho\rme^{\rmi\vartheta})
\end{array} \]
lifting $f$ from (\ref{eqn:f}) can then be written as
\begin{equation}
\label{eqn:tf}
\tf(b,\rho,\vartheta)=\begin{cases}
\bigl(H_s^{\infty}\circ\tPhi-h(1-\rho^2)^2\bigr)/\rho^2
       & \text{for $\rho\neq 0$},\\
2(h+a) & \text{for $\rho=0$}.
\end{cases}
\end{equation}
Similarly, we have functions $\tf^{\nu}$, $\nu\in\N$, when we replace
$H_s^{\infty}$ by $H_s^{\nu}$ and $a$ by $p_{\nu}/q_{\nu}$
in the definition of~$\tf$.

\begin{lem}
\label{lem:smooth}
The function $\tf,\tf^{\nu}$ on
$B\times[0,\sqrt{\varepsilon}\,)\times S^1$ are smooth.
\end{lem}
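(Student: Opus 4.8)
The plan is to reduce the smoothness of $\tf$ and $\tf^{\nu}$ across the binding locus $\{\rho=0\}$ to a single application of Hadamard's lemma, exploiting the fact that the collar parameter enters the pulled-back Hamiltonian only through $\rho^2$. Away from $\{\rho=0\}$ both functions are manifestly smooth, being quotients of smooth functions by the nowhere-vanishing $\rho^2$, so the only point at issue is smoothness along the binding $\{\rho=0\}$.

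First I would note that on the collar $\rho\in[0,\sqrt{\varepsilon}\,)$ the image of $\tPhi$ lies where $r$ is close to~$1$, so that $(s,r,\theta)$ are smooth coordinates there. Since $\tPhi$ is given by $s=\vartheta$, $r=1-\rho^2$, $\theta=b-h\vartheta$, the variable $\rho$ enters the composition $H_s^{\infty}\circ\tPhi$ only through $r=1-\rho^2$. Hence, writing $t:=\rho^2$, the numerator
\[ N:=H_s^{\infty}\circ\tPhi-h(1-\rho^2)^2 \]
is the restriction to $t=\rho^2$ of the genuinely smooth function
\[ \tilde N(b,t,\vartheta):=H_{\vartheta}^{\infty}(1-t,b-h\vartheta)-h(1-t)^2 \]
of the \emph{independent} variables $(b,t,\vartheta)$. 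By the boundary condition~(\ref{eqn:boundary}) we have $H_s^{\infty}|_{r=1}=h$, so $\tilde N(b,0,\vartheta)=0$ identically.

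Hadamard's lemma, applied in the $t$-variable, then yields a smooth function $w(b,t,\vartheta)$ with $\tilde N=t\,w$, whence $\tf=N/\rho^2=w(b,\rho^2,\vartheta)$ for $\rho\neq 0$. The right-hand side is smooth on all of $B\times[0,\sqrt{\varepsilon}\,)\times S^1$, giving the desired smooth extension over $\{\rho=0\}$. To verify that this extension takes the prescribed value $2(h+a)$ there, I would compute $w(b,0,\vartheta)=\partial_t\tilde N|_{t=0}=2h-\partial_r H_s^{\infty}|_{r=1}$ and invoke $\partial_r H_s^{\infty}|_{r=1}=-2a$. This last identity is the one genuinely nontrivial input: since $H_s^{\infty}$ is only a $C^{\infty}$-limit of the $H_s^{\nu}$ and need not agree with the quadratic Hamiltonian $h+a-ar^2$ on any fixed collar (Assumption~\ref{ass:H} fails in the limit), I would deduce it from the $C^{\infty}$-convergence established in Section~\ref{subsection:pseudo-hamil}, which gives $\partial_r H_s^{\infty}|_{r=1}=\lim_{\nu\to\infty}\partial_r H_s^{\nu}|_{r=1}=\lim_{\nu\to\infty}(-2p_{\nu}/q_{\nu})=-2a$, where the middle derivative is that of $R^{\nu}$ in~(\ref{eqn:R-hamil}) at $r=1$.

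The argument for $\tf^{\nu}$ is identical and in fact simpler: here $H_s^{\nu}=R^{\nu}$ on a genuine collar neighbourhood of $\partial D^2$, so $\partial_r H_s^{\nu}|_{r=1}=-2p_{\nu}/q_{\nu}$ holds on the nose, matching the prescribed value $2(h+p_{\nu}/q_{\nu})$ without any limiting argument. I expect the main obstacle to be conceptual rather than computational: recognising that the apparently singular division by $\rho^2$ is really a division by $t=\rho^2$, to which Hadamard's lemma applies, and that the correct binding value $2(h+a)$ is forced by the first-order radial jet of $H_s^{\infty}$ at $r=1$, a quantity one controls only through $C^{\infty}$-convergence rather than through any identity valid on a full neighbourhood of the boundary.
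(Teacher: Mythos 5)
Your proof is correct and follows essentially the same route as the paper: the paper likewise computes $H_s^{\nu}\circ\tPhi-h(1-\rho^2)^2=(h+p_{\nu}/q_{\nu})(2\rho^2-\rho^4)$ near $\rho=0$ to handle $\tf^{\nu}$, and for $\tf$ passes to the $C^{\infty}$-limit to control the $2$-jet in $\rho$ of the numerator at $\rho=0$ and then invokes the Morse/Hadamard division lemma. Your variant of substituting $t=\rho^2$ and applying Hadamard once in $t$ is only a cosmetic repackaging of that same division argument.
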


\begin{proof}
By equations (\ref{eqn:R-hamil}) and (\ref{eqn:H}), we have
\[ H_s^{\nu}=R_s^{\nu}=h+\frac{p_{\nu}}{q_{\nu}}-
\frac{p_{\nu}}{q_{\nu}}r^2\;\;\;\text{near $\partial D^2$}.\]
It follows that
\[ H_s^{\nu}\circ\tPhi-h(1-\rho^2)^2=\Bigl(h+\frac{p_{\nu}}{q_{\nu}}\Bigr)
\cdot(2\rho^2-\rho^4)\]
for $\rho$ near and including~$0$. This shows that the
$\tf^{\nu}$ are smooth, and so are the~$f^{\nu}$.

Since $H_s^{\infty}$ is the $C^{\infty}$-limit of the
$H_s^{\nu}$, the function
\[ H_s^{\infty}\circ\tPhi-h(1-\rho^2)^2\]
vanishes to second order in $\rho$ at $\rho=0$,
and its second partial derivative with respect to $\rho$ at
$\rho=0$ equals $4(h+a)$. By a well-known lemma
of Morse~\cite[p.~349]{mors25}, cf.\ \cite[Lemma~1.2.3]{wall16},
this means that $\tf$ is smooth.
\end{proof}

In \cite{giro}, a function $u\co D^2_{\delta}\rightarrow\R$ having the
property that the lifted function $\tu\co[0,\delta]\times S^1\rightarrow\R$
is smooth is called \emph{weakly smooth}.
\subsection{$C^1$-functions in polar coordinates}
We now discuss the general question under which conditions
a $C^1$-function $\tu$ on $[0,\delta]\times S^1$ descends to
a $C^1$-function $u$ on $D^2_{\delta}$ when $(\rho,\vartheta)
\in[0,\delta]\times S^1$ are interpreted as polar coordinates.
We write the partial derivatives of $\tu$ as $\tu_{\rho}$ and
$\tu_{\vartheta}$, respectively.

\begin{lem}
\label{lem:C1}
Let $\tu\co[0,\delta]\times S^1\rightarrow\R$ be a $C^1$-function
with $\tu(0,\vartheta)$ independent of~$\vartheta$. Define
\[ \begin{array}{rccc}
u\co & D^2_{\delta}             & \longrightarrow & \R\\[.5mm]
     & \rho\rme^{\rmi\vartheta} & \longmapsto     & \tu(\rho,\vartheta).
\end{array}\]
Then $u$ is a $C^1$-function if and only if
\[ \tu_{\rho}(0,0)=-\tu_{\rho}(0,\pi),\;\;\;
\tu_{\rho}(0,\nicefrac{\pi}{2})=-\tu_{\rho}(0,\nicefrac{3\pi}{2}),\]
as well as
\[ \lim_{\rho\rightarrow 0}\Bigl(\cos\vartheta\,\tu_{\rho}-
\frac{\sin\vartheta}{\rho}\,\tu_{\vartheta}\Bigr)=\tu_{\rho}(0,0),\]
and
\[ \lim_{\rho\rightarrow 0}\Bigl(\sin\vartheta\,\tu_{\rho}+
\frac{\cos\vartheta}{\rho}\,\tu_{\vartheta}\Bigr)=
\tu_{\rho}(0,\nicefrac{\pi}{2}).\]
Here the limits $\lim_{\rho\rightarrow 0}$ are to be read as
$\lim_{m\rightarrow\infty}$ for any sequence $(\rho_m,\vartheta_m)$
with $\rho_m\rightarrow 0$; the sequence $(\vartheta_m)_{m\in\N}$
need not converge.
\end{lem}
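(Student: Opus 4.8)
The plan is to use the standard characterisation that $u$ is $C^1$ if and only if its Cartesian partial derivatives $u_x$ and $u_y$ exist on all of $D^2_{\delta}$ and are continuous. Away from the origin the map $(\rho,\vartheta)\mapsto\rho\rme^{\rmi\vartheta}$ is a local diffeomorphism, so $u$ is automatically $C^1$ there, and the chain rule gives
\[ u_x=\cos\vartheta\,\tu_{\rho}-\frac{\sin\vartheta}{\rho}\,\tu_{\vartheta},
\qquad
u_y=\sin\vartheta\,\tu_{\rho}+\frac{\cos\vartheta}{\rho}\,\tu_{\vartheta}. \]
These are precisely the expressions appearing in the last two limits of the lemma, so those limit conditions are going to be nothing but the statement that $u_x$ and $u_y$ extend continuously to the origin.

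First I would compute the partial derivatives of $u$ at the origin directly from the difference quotients. Along the positive $x$-axis the polar angle is $\vartheta=0$, whereas along the negative $x$-axis it is $\vartheta=\pi$; hence the right- and left-hand one-sided derivatives equal $\tu_{\rho}(0,0)$ and $-\tu_{\rho}(0,\pi)$, respectively. Thus $u_x(0,0)$ exists precisely when $\tu_{\rho}(0,0)=-\tu_{\rho}(0,\pi)$, and then equals $\tu_{\rho}(0,0)$; the analogous computation along the $y$-axis yields the second equality and the value $u_y(0,0)=\tu_{\rho}(\nicefrac{\pi}{2})$ at the origin. This pins down the right-hand sides of the two limit conditions as the actual values of the partials at the origin.

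With these values in hand, continuity of $u_x$ and $u_y$ at the origin is exactly the assertion that the chain-rule expressions above converge to $\tu_{\rho}(0,0)$ and $\tu_{\rho}(0,\nicefrac{\pi}{2})$ as $\rho\to 0$, which are the last two displayed limits. I would stress that the limit here must be taken along \emph{every} sequence with $\rho_m\to 0$, allowing $\vartheta_m$ to wander, since continuity at a point of $\R^2$ is a genuinely two-dimensional condition. Evaluating the third limit along the constant angular sequence $\vartheta_m\equiv\pi$ recovers the first equality, and evaluating the fourth along $\vartheta_m\equiv\nicefrac{3\pi}{2}$ recovers the second, so the two equalities are in fact special cases of the two limit conditions. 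Conversely, once the partials are known to exist everywhere and to be continuous, the standard theorem that continuous partial derivatives imply $C^1$-regularity gives the reverse implication, completing the equivalence.

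The part requiring the most care is the behaviour at the origin: one must track the jump of the angular coordinate across the origin when computing the one-sided derivatives along the axes, and one must resist the temptation to let $\vartheta$ converge when taking the two-dimensional limits, as it is precisely the non-convergent angular sequences that make the conditions nontrivial. All remaining steps are routine applications of the chain rule and of the definition of the derivative.
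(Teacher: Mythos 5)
Your proposal is correct and follows essentially the same route as the paper: derive the chain-rule expressions for $u_x$ and $u_y$ away from the origin, identify the two equalities as the condition for the one-sided difference quotients along the axes to match at $z=0$, and read the two limit conditions as continuity of the partials there. Your additional observation that the equalities are recovered from the limits by taking constant angular sequences, and your explicit appeal to the continuous-partials-imply-$C^1$ theorem, only make explicit what the paper leaves to the reader in its closing ``The lemma follows.''
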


\begin{proof}
In cartesian coordinates $z=x+\rmi y$ on $D^2_{\delta}$ we have
$\rho=\sqrt{x^2+y^2}$ and $\vartheta=\arctan(y/x)$.
It follows that, for $z\neq 0$,
\[ u_x=\cos\vartheta\,\tu_{\rho}-\frac{\sin\vartheta}{\rho}\,\tu_{\vartheta}\]
and
\[ u_y=\sin\vartheta\,\tu_{\rho}+\frac{\cos\vartheta}{\rho}\,\tu_{\vartheta}.\]
In $z=0$, we have
\[ u_x(0)=\lim_{t\rightarrow 0}
\frac{u(t)-u(0)}{t},\]
which, depending on the sign of $t\in\R\setminus\{0\}$, gives the limit
\[ \lim_{\substack{t\rightarrow 0\\ t>0}}
\frac{\tu(t,0)-\tu(0,0)}{t}=\tu_{\rho}(0,0),\]
or
\[ \lim_{\substack{t\rightarrow 0\\ t<0}}
\frac{\tu(-t,\pi)-\tu(0,\pi)}{t}=-\tu_{\rho}(0,\pi).\]
For the partial derivative $u_y$, the computations are analogous.
The lemma follows.
\end{proof}

When we verify the conditions of Lemma~\ref{lem:C1}
in the application to proving Proposition~\ref{prop:smooth},
we compute the limit $\lim_{\rho\rightarrow 0}$ as
a double limit $\lim_{m,n\rightarrow\infty}$ for a
sequence $(\rho_m,\vartheta_n)$ with $\rho_m\rightarrow 0$
and $\vartheta_n$ arbitrary.
There we shall need the following elementary lemma.

\begin{lem}
\label{lem:double}
Let $(a_{mn})_{m,n\in\N}$ be a double sequence of real numbers.
Suppose the following conditions are satisfied.
\begin{itemize}
\item[(i)] For $m\rightarrow\infty$, each of the sequences $(a_{mn})_{m\in\N}$
converges to some real number~$a_n$, uniformly in~$n$.
\item[(ii)] The limit $\lim_{n\rightarrow\infty}a_n=:a$ exists.
\end{itemize}
Then the limit $\lim_{m,n\rightarrow\infty}a_{m,n}$ exists and
equals~$a$.
\end{lem}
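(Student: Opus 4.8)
The plan is to run a standard two-step $\varepsilon$-argument, combining the uniform convergence in~(i) with the convergence of the limit sequence in~(ii) via the triangle inequality. Recall first what the double limit means: $\lim_{m,n\to\infty}a_{mn}=a$ asserts that for every $\varepsilon>0$ there is an $N\in\N$ such that $|a_{mn}-a|<\varepsilon$ whenever \emph{both} $m\ge N$ and $n\ge N$. So I would fix $\varepsilon>0$ and aim to produce such an~$N$.

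First I would invoke the uniformity in~(i). Because $a_{mn}\to a_n$ as $m\to\infty$ uniformly in~$n$, there is a single $M\in\N$, independent of~$n$, with $|a_{mn}-a_n|<\varepsilon/2$ for all $m\ge M$ and all~$n$. Next I would use~(ii): since $a_n\to a$, there is an $N_0\in\N$ with $|a_n-a|<\varepsilon/2$ for all $n\ge N_0$.

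Setting $N:=\max\{M,N_0\}$, the triangle inequality gives, for any $m,n\ge N$,
\[ |a_{mn}-a|\le|a_{mn}-a_n|+|a_n-a|<\frac{\varepsilon}{2}+\frac{\varepsilon}{2}=\varepsilon. \]
As $\varepsilon>0$ was arbitrary, this simultaneously establishes the existence of the double limit and identifies its value as~$a$.

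There is no substantial obstacle here; the lemma is elementary. The only point that genuinely carries the argument is the uniformity hypothesis in~(i), which is exactly what permits the single threshold~$M$ to be chosen \emph{independently} of~$n$ and hence to be combined with the $N_0$ coming from~(ii). Without uniformity the first estimate could force $M$ to grow with~$n$, the two thresholds would not be compatible, and the conclusion could fail.
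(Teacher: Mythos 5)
Your proof is correct and is essentially identical to the paper's: both combine the uniform threshold from (i) with the threshold from (ii) via the triangle inequality, taking the maximum of the two. The only cosmetic difference is that you split $\varepsilon$ into halves where the paper ends with a bound of $2\varepsilon$.
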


\begin{proof}
Uniform convergence in $n$ of the sequences $(a_{mn})_{m\in\N}$
means that for any $\varepsilon>0$ there is an $M(\varepsilon)\in\N$
such that
\[ |a_{mn}-a_n|<\varepsilon\;\;\;\text{for all $m\geq M(\varepsilon)$
and $n\in\N$.}\]
Convergence of $(a_n)_{n\in\N}$ means that there is an $N(\varepsilon)\in\N$
such that
\[ |a_n-a|<\varepsilon\;\;\;\text{for all $n\geq N(\varepsilon)$.}\]
Hence, for $m,n\geq\max\{M(\varepsilon),N(\varepsilon)\}$ we have
\[ |a_{mn}-a|\leq|a_{mn}-a_n|+|a_n-a|<2\varepsilon,\]
which proves the lemma. 
\end{proof}
\subsection{Proof of Proposition~\ref{prop:smooth} -- The first derivative}
We now apply Lem\-ma~\ref{lem:C1} to the function $\tf$
in (\ref{eqn:tf}) corresponding to the function $f$ in~(\ref{eqn:f}).
We suppress the $b$-coordinate, which is irrelevant for the
argument.

\begin{lem}
\label{lem:rho=0}
The function $\tf$ in (\ref{eqn:tf}) satisfies
\begin{equation}
\tf_{\rho}|_{\rho=0}=0\;\;\;\text{and}\;\;\;
\tf_{\rho\rho}|_{\rho=0}=-2(h+a).
\end{equation}
All other higher or mixed derivatives of $\tf$ vanish at $\rho=0$.
\end{lem}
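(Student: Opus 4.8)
The plan is to compute the entire $\infty$-jet of $\tf$ at $\rho=0$ and simply read off the asserted derivatives; since $\tf$ is already known to be smooth by Lemma~\ref{lem:smooth}, it suffices to determine its Taylor expansion. The conceptual key is that, although $H_s^{\infty}$ violates Assumption~\ref{ass:H} in general, its $\infty$-jet along $\partial D^2$ still agrees with that of the quadratic Hamiltonian $R^{\infty}:=(h+a)-ar^2$ obtained from (\ref{eqn:R-hamil}) in the limit $\nu\to\infty$. Once this is established, the computation collapses to the explicit one already carried out for the $\tf^{\nu}$ in the proof of Lemma~\ref{lem:smooth}.

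First I would establish the boundary jet coincidence. By (\ref{eqn:R-hamil}) and (\ref{eqn:H}), each $H_s^{\nu}$ equals $R^{\nu}=h+\frac{p_{\nu}}{q_{\nu}}-\frac{p_{\nu}}{q_{\nu}}r^2$ on a neighbourhood of $\partial D^2$ which shrinks with $\nu$ but always contains the circle $\{r=1\}$. Consequently every partial derivative of $H_s^{\nu}$ at $r=1$ --- in $r$, in $\theta$, and in $s$ --- coincides with the corresponding derivative of $R^{\nu}$; in particular all $\theta$- and $s$-derivatives vanish there. Passing to the $C^{\infty}$-limit $H_s^{\nu}\to H_s^{\infty}$, whose derivatives converge uniformly on the compact circle $\{r=1\}$, and using $p_{\nu}/q_{\nu}\to a$, I conclude that the $\infty$-jet of $H_s^{\infty}$ along $\partial D^2$ equals the $\infty$-jet of $R^{\infty}$ there.

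Next I would transport this statement into the cut coordinates. Since $\tPhi$ is smooth and maps $\{\rho=0\}$ into $\{r=1\}$, the multivariate chain rule (Fa\`a di Bruno) shows that the $\infty$-jet of $H_s^{\infty}\circ\tPhi$ at any boundary point $\{\rho=0\}$ depends only on the $\infty$-jet of $H_s^{\infty}$ at its image in $\{r=1\}$, and hence equals the $\infty$-jet of
\[ R^{\infty}\circ\tPhi=(h+a)-a(1-\rho^2)^2, \]
which is a function of $\rho$ alone. Subtracting $h(1-\rho^2)^2$ as in (\ref{eqn:tf}) gives, modulo terms vanishing to infinite order in $\rho$,
\[ H_s^{\infty}\circ\tPhi-h(1-\rho^2)^2\equiv(h+a)\bigl(1-(1-\rho^2)^2\bigr)=(h+a)(2\rho^2-\rho^4), \]
which is exactly the closed form found for the $\tf^{\nu}$ in the proof of Lemma~\ref{lem:smooth}, with $p_{\nu}/q_{\nu}$ replaced by~$a$.

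Dividing by $\rho^2$ and using that $\tf$ is smooth, the $\infty$-jet of $\tf$ at $\rho=0$ is therefore that of $(h+a)(2-\rho^2)=2(h+a)-(h+a)\rho^2$. Reading off the coefficients yields $\tf_{\rho}|_{\rho=0}=0$ and $\tf_{\rho\rho}|_{\rho=0}=-2(h+a)$; and since this jet is a polynomial of degree two in $\rho$ with no dependence on $b$ or $\vartheta$, every remaining higher $\rho$-derivative and every mixed derivative involving $\partial_{\vartheta}$ or $\partial_b$ vanishes at $\rho=0$. I expect the only genuine work to lie in the first step: making precise that termwise $C^{\infty}$-convergence, combined with the fact that $H_s^{\nu}$ is literally the quadratic $R^{\nu}$ near the boundary, forces the boundary $\infty$-jet of the limit to be quadratic. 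Everything after that is the explicit division above.
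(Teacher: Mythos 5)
Your proof is correct and rests on the same essential mechanism as the paper's: each $H_s^{\nu}$ literally equals the quadratic $R^{\nu}$ near $\partial D^2$, and $C^{\infty}$-convergence transports the resulting explicit boundary jet to the limit, after which the stated derivatives are read off from $(h+a)(2-\rho^2)$. The only (harmless) difference is that you take the limit at the level of the $\infty$-jets of the Hamiltonians along $\{r=1\}$ and then push through $\tPhi$ and the division by $\rho^2$ formally, whereas the paper takes the limit of the already-divided functions $\tf^{\nu}$ directly; your variant has the mild virtue of not needing the assertion that $\tf^{\nu}\rightarrow\tf$ in $C^{\infty}$, relying instead only on the convergence $H_s^{\nu}\rightarrow H_s^{\infty}$ and the smoothness of $\tf$ already established in Lemma~\ref{lem:smooth}.
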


\begin{proof}
From the proof of Lemma~\ref{lem:smooth} we have
\[ \tf^{\nu}=\Bigl(h+\frac{p_{\nu}}{q_{\nu}}\Bigr)\cdot(2-\rho^2)\]
near $\rho=0$. Since $\tf$ is the $C^{\infty}$-limit of the $\tf^{\nu}$,
the lemma follows.
\end{proof}

Let $(\rho_n,\vartheta_n)_{n\in\N}$ be a sequence
in $(0,\delta]\times S^1$ with $\rho_n\rightarrow 0$ for $n\rightarrow\infty$.
We need to verify that
\begin{equation}
\label{eqn:lim1}
\tf_{\rho}(\rho_n,\vartheta_n)\longrightarrow 0
\end{equation}
and
\begin{equation}
\label{eqn:lim2}
\frac{1}{\rho_n}\tf_{\vartheta}(\rho_n,\vartheta_n)\longrightarrow 0.
\end{equation}

For the limit in (\ref{eqn:lim1}), set
\[ a_{mn}:=\tf_{\rho}(\rho_m,\vartheta_n)
=\rho_m\cdot\frac{\tf_{\rho}(\rho_m,\vartheta_n)}{\rho_m}.\]
The limit $\lim_{m\rightarrow\infty}a_{mn}$
is uniform in $n$ (and equals~$0$) thanks to the following lemma.
With Lemma~\ref{lem:double} we then conclude
$\lim_{m,n\rightarrow\infty}a_{mn}=0$.

\begin{lem}
\label{lem:uniform}
For $\rho\rightarrow 0$, the difference quotient
$\tf_{\rho}(\rho,\vartheta)/\rho$ converges to the derivative
$\tf_{\rho\rho}(0,\vartheta)=-2(h+a)$ uniformly in~$\vartheta$.
\end{lem}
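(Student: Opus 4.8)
The plan is to read the quotient $\tf_{\rho}(\rho,\vartheta)/\rho$ as the difference quotient at $\rho=0$ of the radial derivative $\tf_{\rho}$, and to control it via the second radial derivative. The two ingredients already at hand are that $\tf$ is smooth (Lemma~\ref{lem:smooth}), so $\tf_{\rho\rho}$ is continuous on $B\times[0,\sqrt{\varepsilon}\,)\times S^1$, and that $\tf_{\rho}|_{\rho=0}=0$ with $\tf_{\rho\rho}|_{\rho=0}=-2(h+a)$ (Lemma~\ref{lem:rho=0}). Since $\tf_{\rho}(0,\vartheta)=0$, the quotient is literally
\[ \frac{\tf_{\rho}(\rho,\vartheta)}{\rho}=\frac{\tf_{\rho}(\rho,\vartheta)-\tf_{\rho}(0,\vartheta)}{\rho-0}, \]
which converges pointwise to $\tf_{\rho\rho}(0,\vartheta)$ by definition; the whole content of the lemma is to upgrade this to \emph{uniform} convergence in~$\vartheta$.

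First I would fix some $\delta\in(0,\sqrt{\varepsilon}\,)$ and work on the compact collar $[0,\delta]\times S^1$ (the $b$-coordinate being irrelevant, as noted before the lemma), on which $\tf_{\rho\rho}$ is uniformly continuous. Next, applying the fundamental theorem of calculus to $\rho\mapsto\tf_{\rho}(\rho,\vartheta)$ and using $\tf_{\rho}(0,\vartheta)=0$ gives $\tf_{\rho}(\rho,\vartheta)=\int_0^{\rho}\tf_{\rho\rho}(t,\vartheta)\,\rmd t$, so that for $\rho>0$
\[ \frac{\tf_{\rho}(\rho,\vartheta)}{\rho}-\tf_{\rho\rho}(0,\vartheta)
=\frac{1}{\rho}\int_0^{\rho}\bigl(\tf_{\rho\rho}(t,\vartheta)-\tf_{\rho\rho}(0,\vartheta)\bigr)\,\rmd t. \]
Estimating the integrand by its supremum then yields
\[ \left|\frac{\tf_{\rho}(\rho,\vartheta)}{\rho}-\tf_{\rho\rho}(0,\vartheta)\right|
\leq\sup_{0\leq t\leq\rho}\bigl|\tf_{\rho\rho}(t,\vartheta)-\tf_{\rho\rho}(0,\vartheta)\bigr|. \]
Finally, uniform continuity of $\tf_{\rho\rho}$ on $[0,\delta]\times S^1$ forces the right-hand side to tend to $0$ as $\rho\to0$, uniformly in~$\vartheta$; and since $\tf_{\rho\rho}(0,\vartheta)=-2(h+a)$ is independent of $\vartheta$ by Lemma~\ref{lem:rho=0}, this is precisely the asserted uniform limit.

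The hard part is essentially non-existent here: the only point that needs care is that the convergence be uniform in $\vartheta$ rather than merely pointwise, which is exactly why I restrict to the \emph{compact} collar $[0,\delta]\times S^1$ and invoke uniform (rather than plain) continuity of $\tf_{\rho\rho}$. Everything else reduces to the elementary fact that the average of a continuous function over the shrinking interval $[0,\rho]$ converges to its value at the endpoint $\rho=0$. If one preferred, one could instead write $\tf_{\rho}(\rho,\vartheta)=\rho\,\tf_{\rho\rho}(\xi,\vartheta)$ for some intermediate $\xi=\xi(\rho,\vartheta)\in(0,\rho)$ via the mean value theorem and conclude in the same way, but the integral form makes the uniform estimate most transparent.
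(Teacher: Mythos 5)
Your proof is correct, but it takes a genuinely different route from the paper's. The paper fixes a reference angle $\vartheta_0$, bounds $|\tf_{\rho}(\rho_0,\vartheta_1)-\tf_{\rho}(\rho_0,\vartheta_0)|$ by two applications of the mean value theorem (first in $\vartheta$, then in $\rho$, using that $\tf_{\rho\vartheta}|_{\rho=0}=0$ by Lemma~\ref{lem:rho=0}) to obtain an estimate of the form $\max|\tf_{\rho\vartheta\rho}|\cdot\rho_0\cdot|\vartheta_1-\vartheta_0|$; dividing by $\rho_0$ this gives equicontinuity of the difference quotients in $\vartheta$, which together with pointwise convergence at finitely many reference angles (compactness of $S^1$) yields uniformity. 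You instead use $\tf_{\rho}(0,\vartheta)=0$ to write $\tf_{\rho}(\rho,\vartheta)=\int_0^{\rho}\tf_{\rho\rho}(t,\vartheta)\,\rmd t$, so that the deviation of the difference quotient from $\tf_{\rho\rho}(0,\vartheta)$ is an average of $\tf_{\rho\rho}(t,\vartheta)-\tf_{\rho\rho}(0,\vartheta)$ over $[0,\rho]$, and you conclude from uniform continuity of $\tf_{\rho\rho}$ on the compact collar $[0,\delta]\times S^1$. Your argument is more elementary and economical: it needs only that $\tf$ is $C^2$ up to $\rho=0$ with $\tf_{\rho}|_{\rho=0}=0$ and $\tf_{\rho\rho}|_{\rho=0}$ constant, whereas the paper invokes a third (mixed) derivative and a covering argument. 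Both approaches adapt to the remaining limit (\ref{eqn:lim2}) and to the higher-order terms of Section~\ref{subsection:higher}; for the terms divided by $\rho^2$ your version would use the Taylor formula with integral remainder of the appropriate order.
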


\begin{proof}
We make the following estimates with the mean value theorem:
\begin{eqnarray*}
|\tf_{\rho}(\rho_0,\vartheta_1)-\tf_{\rho}(\rho_0,\vartheta_0)|
 & \leq & \max_{\vartheta}|\tf_{\rho\vartheta}(\rho_0,\vartheta)|\cdot
          |\vartheta_1-\vartheta_0|\\
 & \leq & \max_{\rho,\vartheta}|\tf_{\rho\vartheta\rho}(\rho,\vartheta)|\cdot
          \rho_0\cdot|\vartheta_1-\vartheta_0|.
\end{eqnarray*}
Here $|\vartheta_1-\vartheta_0|$ denotes the length of a circular arc
between $\vartheta_0$ and $\vartheta_1$; the maximum is taken over
$\vartheta\in S^1$ and $\rho\in[0,\rho_0]$.

We then estimate
\begin{eqnarray*}
\Bigl|\frac{\tf_{\rho}(\rho_0,\vartheta_1)}{\rho_0}+2(h+a)\Bigr|
 & \leq & \Bigl|\frac{\tf_{\rho}(\rho_0,\vartheta_1)}{\rho_0}
          -\frac{\tf_{\rho}(\rho_0,\vartheta_0)}{\rho_0}\Bigr|
          +\Bigl|\frac{\tf_{\rho}(\rho_0,\vartheta_0)}{\rho_0}
          +2(h+a)\Bigr|\\
 & \leq & \max_{\rho,\vartheta}|\tf_{\rho\vartheta\rho}(\rho,\vartheta)|\cdot
          |\vartheta_1-\vartheta_0|
          +\Bigl|\frac{\tf_{\rho}(\rho_0,\vartheta_0)}{\rho_0}
          +2(h+a)\Bigr|,
\end{eqnarray*}
which, together with the compactness of~$S^1$,
gives the desired uniformity in~$\vartheta$.
\end{proof}

For the limit in (\ref{eqn:lim2}), one applies completely analogous
arguments to the double sequence
$\tf_{\vartheta}(\rho_m,\vartheta_n)/\rho_m$.

This shows that the function $f$ in Proposition~\ref{prop:smooth} is
continuously differentiable.
\subsection{Proof of Proposition~\ref{prop:smooth} -- Higher derivatives}
\label{subsection:higher}
In principle, higher deri\-va\-tives one can deal with by iterating
Lemma~\ref{lem:C1}. In order to establish that $f$ is $C^2$,
we write out explicitly the second derivative~$f_{xx}$.
For $f_{xy}$ and $f_{yy}$ the considerations are analogous.

In $z\neq 0$ we have
\begin{eqnarray}
\label{eqn:fxx}
f_{xx} & = & \tf_{\rho\rho}\cos^2\vartheta
             -\tf_{\rho\vartheta}\frac{2\sin\vartheta\cos\vartheta}{\rho}
             +\tf_{\rho}\frac{\sin^2\vartheta}{\rho}\\
       &   & \mbox{}+\tf_{\vartheta\vartheta}\frac{\sin^2\vartheta}{\rho^2}
             +\tf_{\vartheta}\frac{2\sin\vartheta\cos\vartheta}{\rho^2}.
             \nonumber
\end{eqnarray}
In $z=0$, we find
\[  f_{xx}(0)=\tf_{\rho\rho}(0,0)=\tf_{\rho\rho}(0,\pi).\]

Recall the properties
of $\tf$ stated in Lemma~\ref{lem:rho=0}.
The derivative $f_{xx}(0)$
exists thanks to $\tf_{\rho\rho}(0,0)$ and $\tf_{\rho\rho}(0,\pi)$
both being equal to $-2(h+a)$. For the continuity of $f_{xx}$
in $z=0$, we consider the summands on the right-hand side of
(\ref{eqn:fxx}) in turn. We evaluate these summands at
a point $(\rho_m,\vartheta_n)$, and consider the limit $m\rightarrow\infty$,
assuming that $\rho_m\rightarrow 0$ in this limit.

\begin{itemize}
\item[(i)] The term $\tf_{\rho\rho}\cos^2\vartheta_n$ converges to
\[ \tf_{\rho\rho}|_{\rho=0}\cos^2\vartheta_n=-2(h+a)\cos^2\vartheta_n.\]

\item[(ii)] The term
$-2\tf_{\rho\vartheta}\sin\vartheta_n\cos\vartheta_n/\rho$
converges to zero, since $\tf_{\rho\vartheta\rho}|_{\rho=0}=0$.

\item[(iii)] The term $\tf_{\rho}\sin^2\vartheta_n/\rho$ converges to
\[ \tf_{\rho\rho}|_{\rho=0}\sin^2\vartheta_n=-2(h+a)\sin^2\vartheta_n.\]

\item[(iv)] The term $\tf_{\vartheta\vartheta}\sin^2\vartheta_n/\rho^2$
is seen to converge to zero by applying l'H\^opital's rule twice, since
$\tf_{\vartheta\vartheta\rho\rho}|_{\rho=0}=0$.

\item[(v)] The limit of the term
$2\tf_{\vartheta}\sin\vartheta_n\cos\vartheta_n/\rho^2$ equals zero by the
same argument as in~(iv).
\end{itemize}

The uniformity of these limits can be seen by the same reasoning as above.
Thus, we have
\[ \lim_{m,n\rightarrow\infty}f_{xx}(\rho_m,\vartheta_n)=-2(h+a)=f_{xx}(0).\]

This argument, applied analogously to $f_{xy}$ and~$f_{yy}$, shows
that the function $f$ in Proposition~\ref{prop:smooth} is~$C^2$.

In order to establish that the function $f$
in question is $C^{\infty}$ near $z=0$, we need a more
systematic approach. We shall describe one such approach that
is general enough to apply to the Fayad--Katok examples.

\begin{lem}
\label{lem:u}
Let $\tu\co[0,\delta]\times S^1\rightarrow\R$ be a smooth function with
$\tu(0,\vartheta)$ independent of~$\vartheta$, and let
$u(x+\rmi y)=u(\rho\rme^{\rmi\vartheta})=\tu(\rho,\vartheta)$ be the induced
function on~$D^2_{\delta}$. For $k\in\N$ and $j\in\{0,1,\ldots,k\}$,
the partial derivatives
\[ \frac{\partial^ku}{\partial x^j\,\partial y^{k-j}}\]
in $z\neq 0$ are sums of terms
\[ \frac{\partial^{\ell}\tu}{\partial\rho^i\,\partial\vartheta^{\ell-i}}
\cdot\frac{a_{ji}^{k\ell}(\vartheta)}{\rho^{k-i}},\]
where $1\leq\ell\leq k$, $0\leq i\leq\ell$, and $a_{ji}^{k\ell}$ is a
polynomial in $\sin\vartheta$ and $\cos\vartheta$.
\end{lem}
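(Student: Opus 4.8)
The plan is to argue by induction on the total order $k$, converting Cartesian derivatives into polar ones via the chain rule. The crucial input is the pair of identities recorded in the proof of Lemma~\ref{lem:C1},
\[ \partial_x=\cos\vartheta\,\partial_{\rho}-\frac{\sin\vartheta}{\rho}\,\partial_{\vartheta},
\qquad
\partial_y=\sin\vartheta\,\partial_{\rho}+\frac{\cos\vartheta}{\rho}\,\partial_{\vartheta}
\quad(z\neq 0), \]
the point being that each of $\partial_x,\partial_y$ has the shape $p(\vartheta)\,\partial_{\rho}+\rho^{-1}q(\vartheta)\,\partial_{\vartheta}$ with $p,q$ polynomials in $\sin\vartheta,\cos\vartheta$. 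For the base case $k=1$ these identities already display $u_x$ and $u_y$ as sums of the asserted terms: the $\partial_{\rho}$-contributions have $(\ell,i)=(1,1)$ and denominator $\rho^{0}$, and the $\partial_{\vartheta}$-contributions have $(\ell,i)=(1,0)$ and denominator $\rho^{1}$, each matching $\rho^{k-i}$.

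For the inductive step I would assume the statement at order $k$ and apply one further derivative $\partial_x$ (the case $\partial_y$ being identical) to a single summand
\[ T=\frac{\partial^{\ell}\tu}{\partial\rho^{i}\,\partial\vartheta^{\ell-i}}\cdot\frac{a(\vartheta)}{\rho^{k-i}}, \]
with $1\le\ell\le k$, $0\le i\le\ell$, and $a$ a trigonometric polynomial; a general sum is then handled by linearity. Writing $\partial_x=\cos\vartheta\,\partial_{\rho}-\rho^{-1}\sin\vartheta\,\partial_{\vartheta}$ and using the product rule produces four contributions, according to whether $\partial_{\rho}$ or $\rho^{-1}\partial_{\vartheta}$ lands on the $\tu$-block or on the scalar coefficient. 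The index shifts are: $\partial_{\rho}$ on the $\tu$-block sends $(\ell,i)\mapsto(\ell+1,i+1)$ with denominator unchanged, $\rho^{k-i}=\rho^{(k+1)-(i+1)}$; $\partial_{\rho}$ on $\rho^{-(k-i)}$ fixes $(\ell,i)$ and raises the denominator to $\rho^{(k+1)-i}$; $\rho^{-1}\partial_{\vartheta}$ on the $\tu$-block sends $(\ell,i)\mapsto(\ell+1,i)$ with denominator $\rho^{(k+1)-i}$; and $\rho^{-1}\partial_{\vartheta}$ on $a(\vartheta)$ fixes $(\ell,i)$ with denominator $\rho^{(k+1)-i}$. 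In each branch the exponent law $\rho^{k'-i'}$ with $k'=k+1$ is reproduced exactly.

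It then remains to check that the admissible ranges survive and that the trigonometric class is preserved. Every branch either fixes $(\ell,i)$ or increases $\ell$ (and possibly $i$) by one, so from $1\le\ell\le k$ and $0\le i\le\ell$ one obtains $1\le\ell'\le k+1$ and $0\le i'\le\ell'$; and the new coefficient is $a$, or its $\vartheta$-derivative, multiplied by $\cos\vartheta$ or $\sin\vartheta$, hence again a polynomial in $\sin\vartheta,\cos\vartheta$, this class being closed under $\partial_{\vartheta}$ and under multiplication by $\sin\vartheta$ or $\cos\vartheta$. This closes the induction. The argument is essentially mechanical; the only point demanding attention---and the one I would regard as the main obstacle---is the simultaneous bookkeeping of the two indices $(\ell,i)$ together with the $\rho$-power, where one must confirm that all four branches of every derivative respect the single exponent relation $\rho^{k-i}$.
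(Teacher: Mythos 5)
Your proposal is correct and follows essentially the same route as the paper: induction on $k$ with base case given by the formulae for $u_x,u_y$ from Lemma~\ref{lem:C1}, and the inductive step carried out via the chain-rule identities $\partial\rho/\partial x=\cos\vartheta$, $\partial\vartheta/\partial x=-\sin\vartheta/\rho$. The paper merely sketches this induction, whereas you verify the four-branch bookkeeping of $(\ell,i)$ and the exponent $\rho^{k-i}$ explicitly, and that verification is accurate.
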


\begin{proof}
For $k=1$ this is confirmed by the formulae for $u_x$ and $u_y$
in the proof of Lemma~\ref{lem:C1}. Then argue by induction over~$k$,
using the fact that $\partial\rho/\partial x=\cos\vartheta$
and $\partial\vartheta/\partial x=-\sin\vartheta/\rho$, and similar
expressions for the derivatives with respect to~$y$.
\end{proof}

\begin{lem}
Let $\tu\co [0,\delta]\times S^1\rightarrow\R$ be a smooth function satisfying
\[ \frac{\partial^{\ell}\tu}{\partial\rho^i\,\partial\vartheta^{\ell-i}}
(0,\vartheta)=0\]
for all $\ell\in\N_0$, $0\leq i\leq\ell$ and $\vartheta\in S^1$.
Then the function $u\co D^2_{\delta}\rightarrow\R$,
defined as in Lemma~\ref{lem:u}, is smooth and vanishes to
infinite order at~$z=0$.
\end{lem}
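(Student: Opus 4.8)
The plan is to prove, by induction on $k$, that $u$ is $C^k$ and that all its partial derivatives of order $1,\ldots,k$ are continuous on $D^2_{\delta}$ and vanish at $z=0$; smoothness and infinite-order vanishing then follow at once. The crux is an estimate that tames the negative powers of $\rho$ produced by Lemma~\ref{lem:u}. First I would record that each mixed derivative $g:=\partial^{\ell}\tu/(\partial\rho^i\,\partial\vartheta^{\ell-i})$ is itself smooth on $[0,\delta]\times S^1$ and, by the hypothesis applied to the pair consisting of total order $\ell+m$ and $\rho$-order $i+m$, has vanishing $\rho$-derivatives of every order along $\rho=0$. Taylor's formula in the $\rho$-variable with integral remainder, together with the compactness of $S^1$, then yields for every $N\in\N$ a constant $C_N$ with
\[ |g(\rho,\vartheta)|\leq C_N\,\rho^N\qquad\text{for all }(\rho,\vartheta), \]
the bound being \emph{uniform} in $\vartheta$.

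Next I would feed this into Lemma~\ref{lem:u}. For $z\neq 0$, each partial derivative $\partial^k u/(\partial x^j\,\partial y^{k-j})$ is a finite sum of terms $g\cdot a_{ji}^{k\ell}(\vartheta)/\rho^{k-i}$ with $0\leq i\leq\ell\leq k$, where the coefficients $a_{ji}^{k\ell}$ are trigonometric polynomials and hence bounded on $S^1$. Choosing $N>k-i$ in the estimate above, each such term is $O(\rho^{N-(k-i)})$ and therefore tends to $0$ as $\rho\rightarrow 0$, uniformly in $\vartheta$. This uniformity is exactly what is needed to avoid the double-limit subtlety discussed before Lemma~\ref{lem:double}: the limit is $0$ no matter how $\vartheta$ behaves as $z\rightarrow 0$.

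With these facts in hand the induction runs smoothly. The base case $k=0$ is the hypothesis with $\ell=i=0$, which gives $\tu(0,\vartheta)=0$, so $u(0)=0$ and $u$ is continuous. For the inductive step, let $D$ be any derivative of order $k-1$; by the inductive hypothesis $Du$ is continuous with $Du(0)=0$, and for $z\neq 0$ its first partials are order-$k$ derivatives of $u$, given by the expressions from Lemma~\ref{lem:u}, which by the previous paragraph extend continuously by the value $0$ across $z=0$. I would then invoke the standard fact that a function which is continuous near a point, is $C^1$ away from it, and whose first partials admit continuous extensions to the point, is in fact $C^1$ there with the extended partials as its genuine partials; the proof is via the fundamental theorem of calculus along the coordinate axes, e.g.\ $\bigl(Du(t,0)-Du(0)\bigr)/t=\tfrac1t\int_0^t\partial_x(Du)(s,0)\,\rmd s\rightarrow 0$ by continuity of the extended partial. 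Thus $u$ is $C^k$ and all its order-$k$ partials are continuous and vanish at $z=0$, completing the induction.

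Letting $k\rightarrow\infty$ shows that $u$ is smooth, and since every partial derivative of positive order vanishes at $z=0$, the function $u$ vanishes to infinite order there. The main obstacle is the presence of the factors $\rho^{-(k-i)}$ in Lemma~\ref{lem:u}: everything rests on the uniform-in-$\vartheta$ Taylor estimate, and it is precisely here that the full hypothesis, vanishing of \emph{all} mixed derivatives rather than merely finitely many, is indispensable, since controlling the $k$-th derivatives of $u$ requires arbitrarily high-order vanishing of $g$ in~$\rho$.
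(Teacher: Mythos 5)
Your proof is correct and follows essentially the same route as the paper: induction on the order of derivatives, combined with the decomposition of Lemma~\ref{lem:u} and the observation that the numerators vanish to arbitrarily high order in $\rho$, uniformly in $\vartheta$, so that the negative powers $\rho^{-(k-i)}$ are harmless. The only difference is technical: where the paper obtains the limits by repeated application of l'H\^opital's rule together with a separate uniformity argument modelled on Lemma~\ref{lem:uniform}, you package both into a single Taylor estimate with integral remainder, $|g(\rho,\vartheta)|\leq C_N\rho^N$, which is a clean and arguably tidier way to reach the same conclusion.
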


\begin{proof}
Again we argue by induction on the order of derivatives. The limit
conditions in Lemma~\ref{lem:C1} are satisfied by l'H\^opital's rule,
so the function $u$ is~$C^1$.

By a repeated application of l'H\^opital's rule we also
see that the terms from the preceding lemma satisfy
\[ \lim_{\rho\rightarrow 0}
\frac{\partial^{\ell}\tu}{\partial\rho^i\,\partial\vartheta^{\ell-i}}
\cdot\frac{a_{ji}^{k\ell}(\vartheta)}{\rho^{k-i}}=0.\]
These limits are uniform in $\vartheta$ by arguments
as in the proof of Lemma~\ref{lem:uniform}.
It follows that
\[ \lim_{z\rightarrow 0}\frac{\partial^ku}{\partial x^j\,\partial y^{k-j}}
(z)=0.\]

For the inductive step, we need to show that $u$ is of class $C^{k+1}$,
presuming that we have already established it to be of class~$C^k$,
with vanishing partial derivatives at~$z=0$. Thus, let $v$ be
a $k^{\mathrm{th}}$ partial derivative of~$u$, and $\tv$ its lift
to $[0,\delta]\times S^1$. Then, as in Lemma~\ref{lem:C1},
\[ v_x(0)=\lim_{t\rightarrow 0}\frac{v(t)}{t}=
\left\{\begin{array}{rl}
\tv_{\rho}(0,0)=0    & \text{for $t>0$},\\
-\tv_{\rho}(0,\pi)=0 & \text{for $t<0$,}
\end{array}\right.\]
so the derivative $v_x(0)$ exists. The continuity of $v_x(z)$
at $z=0$ follows from the limit behaviour of the derivatives
described above. For the derivative $v_y$ the argument is
analogous.
\end{proof}

Proposition~\ref{prop:smooth} now follows by applying this lemma
to the functions
\[ u=f-(h+a)(2-x^2-y^2)\;\;\;\text{and}\;\;\;
\tu=\tf-(h+a)(2-\rho^2).\]

This concludes the proof of Theorem~\ref{thm:main}, except for the
dynamical convexity, which will be established in
Section~\ref{subsubsection:convex}.
\subsection{A more general sufficient criterion}
\label{subsection:sufficient}
We continue to write $\Phi$ for the embedding (\ref{eqn:Phi}).

\begin{thm}
\label{thm:main-general}
Let $\psi\co D^2\rightarrow D^2$ be an area-preserving diffeomorphism
generated by a $2\pi$-periodic Hamiltonian function~$H_s$
with $H_s|_{\partial(S^1\times D^2)}\equiv h\in\N$.
Consider the function
\[ f(b,\rho\rme^{\rmi\vartheta}):=
\bigl(H_s\circ\Phi-h(1-\rho^2)^2\bigr)/\rho^2\]
on $B\times\bigl(\Int(D^2_{\sqrt{\varepsilon}})\setminus\{0\}\bigr)$.
If $f$ extends continuously over $B\times\{0\}$, and if the lifted
function $\tf$ on $B\times[0,\sqrt{\varepsilon})\times S^1$
is smooth and has the $\infty$-jet, along $B\times\{0\}\times S^1$, of
the lift of a smooth function, then $\psi$ embeds into
a Reeb flow on~$S^3$.
\end{thm}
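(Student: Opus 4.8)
The plan is to rerun the cut construction from the proofs of Proposition~\ref{prop:intro} and Theorem~\ref{thm:main}, now feeding in the smoothness of the binding data as a hypothesis rather than deriving it by hand as we did for the Fayad--Katok examples in Lemmas~\ref{lem:smooth}--\ref{lem:u}. \emph{First} I would reduce to the case where $\alpha:=H_s\,\rmd s+\lambda$ is a genuine contact form on $V=S^1\times D^2$. Adding a constant $c\in\N$ to $H_s$ leaves the Hamiltonian vector field $X_s$, and hence $\psi$, unchanged; it replaces $h$ by $h+c$ and, by a direct computation, replaces $f$ by $f+c(2-\rho^2)$. Since a suitable comparison smooth function shifts by the same smooth term, both the continuity hypothesis and the $\infty$-jet hypothesis are preserved. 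As $H_s+\lambda(X_s)$ is bounded below on the compact solid torus, a sufficiently large $c$ makes the contact condition~(\ref{eqn:contact}) hold throughout~$V$, so that by Lemma~\ref{lem:R} the Reeb field is a positive multiple of $R=\partial_s+X_s$, whose time $2\pi$ return map on $\{0\}\times\Int(D^2)$ is $\psi|_{\Int(D^2)}$.

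\emph{Next} I would perform the contact cut along $\partial V$. Collapsing the orbits of $Y=\partial_s-h\partial_{\theta}$ yields $S^3$ (Lemma~\ref{lem:quotient}), and over $\Int(V)$ the form $\alpha$ descends through the strict contact embedding~(\ref{eqn:strict-embed}). The only question is extendability across the binding $B$. Using the embedding $\Phi$ of~(\ref{eqn:Phi}), formula~(\ref{eqn:Phi*}) reads $\Phi^*\alpha=(1-\rho^2)^2\,\rmd b+f\rho^2\,\rmd\vartheta$, so everything comes down to showing that the lifted coefficient $\tf$ descends to a smooth function on the disc factor. This is precisely where the hypothesis is used: by assumption there is a smooth function $g$ whose lift $\tg$ agrees with $\tf$ to infinite order along $B\times\{0\}\times S^1$, so $\tf-\tg$ is smooth and flat at $\rho=0$ in all $\rho$- and $\vartheta$-derivatives. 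The last (unlabelled) lemma of Section~\ref{subsection:higher}, applied to $\tu=\tf-\tg$, then shows that $f-g$ descends to a smooth function vanishing to infinite order at $B$; adding back the smooth $g$, the function $f$ descends smoothly over $B\times D^2_{\sqrt{\varepsilon}}$, and $\halpha:=(1-\rho^2)^2\,\rmd b+f\rho^2\,\rmd\vartheta$ is a smooth $1$-form near~$B$.

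\emph{Then} I would glue and check contactness. On the pointed neighbourhood of $B$, where $\Phi$ is a diffeomorphism onto a subset of $\Int(V)$, the two descriptions agree, so they patch to a global smooth $1$-form $\halpha$ on $S^3$; it is contact on $\Int(V)$ because $\alpha$ is, and near $B$ by Lemma~\ref{lem:h+a}. The positivity $f|_{B\times\{0\}}>0$ demanded there is automatic: a Taylor expansion of the numerator of $f$ in $\rho$ at $r=1$ identifies the binding value of $f$ with $2h-\partial H_s/\partial r|_{r=1}$, which is positive exactly when the contact condition~(\ref{eqn:contact-alt}) holds along $\partial D^2$---and this holds after our reduction. (The continuity hypothesis is what makes this value independent of~$\vartheta$, hence well defined.) Finally, exactly as in the proof of Proposition~\ref{prop:intro}, the inclusion $\{0\}\times D^2\subset V$ descends to a disc-like surface in $S^3$, smooth on its interior, whose boundary is the Reeb orbit $B$ and whose interior return map is $\psi|_{\Int(D^2)}$. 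Hence $\psi$ embeds into a Reeb flow on~$S^3$.

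The genuinely delicate step---smooth descent of the binding data---has been isolated into the $\infty$-jet hypothesis, so the remaining work is bookkeeping. I expect the main obstacle to be the two gluing verifications: that adding a constant respects all hypotheses (in particular the jet condition), and that the continuous binding value of $f$ coincides with the boundary contact condition $2h-\partial H_s/\partial r|_{r=1}>0$. Everything else is a faithful transcription of the Fayad--Katok argument with the analytic heart replaced by the assumption.
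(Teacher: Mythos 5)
Your proposal follows the same route as the paper's own (very short) proof: the cut construction of Sections~\ref{section:cuts} and~\ref{subsection:extension} reduces everything to the smooth, positive extension of $f$ over $B\times\{0\}$, and this extension is obtained exactly as you say, by applying the final (unlabelled) lemma of Section~\ref{subsection:higher} to $\tu=\tf-\tg$, where $\tg$ is the lift of the smooth comparison function supplied by the $\infty$-jet hypothesis. Your identification of the binding value $f|_{B\times\{0\}}=2h-\partial H_s/\partial r|_{r=1}$ and its positivity via~(\ref{eqn:contact-alt}), as well as the final appeal to the argument of Proposition~\ref{prop:intro}, all match the paper.

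The one step that does not work as written is your preliminary normalisation. Adding $c\in\N$ to $H_s$ replaces $h$ by $h+c$ not only in the boundary value and the $S^1$-action but also in the embedding~$\Phi$, whose third component is $\theta=b-h\vartheta$. A correct computation gives
\[ \tf'(b,\rho,\vartheta)=\tf(b-c\vartheta,\rho,\vartheta)+c(2-\rho^2), \]
not $\tf+c(2-\rho^2)$, and the shear $(b,\vartheta)\mapsto(b-c\vartheta,\vartheta)$ does not in general preserve the property of having the $\infty$-jet of the lift of a smooth function: a jet term $\rme^{\rmi nb}\rho^{j}\rme^{\rmi\ell\vartheta}$ is admissible precisely when $|\ell|\leq j$ and $\ell\equiv j\pmod 2$, and after the shear the $\vartheta$-frequency becomes $\ell-nc$, which can violate either condition. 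It can even destroy the continuity hypothesis, since $\tf'(b,0,\vartheta)=\tf(b-c\vartheta,0,\vartheta)$ need not be independent of~$\vartheta$ unless $f|_{B\times\{0\}}$ is constant in~$b$. So the claim that ``both the continuity hypothesis and the $\infty$-jet hypothesis are preserved'' is unjustified at the level of generality of the theorem (it is fine when the $\infty$-jet of $H_s$ along $\partial(S^1\times D^2)$ is that of an $r$-only function, as in the Fayad--Katok application). The gap is avoidable rather than fatal: throughout the paper $h$ is a standing choice of a \emph{large} natural number, so the contact condition~(\ref{eqn:contact}) is part of the setup and the jet hypothesis is formulated for that fixed~$h$; one should fix $h$ once at the outset rather than renormalise after stating the hypotheses.
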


\begin{proof}
By the discussion in Section~\ref{subsection:extension},
a sufficient condition for $\psi$ to embed into a Reeb flow on $S^3$
is that the function $f$
extends smoothly as a positive function over $B\times\{0\}$.
By the analysis in the preceding section, this in turn is
equivalent to the conditions on $f$ stated in the theorem.
\end{proof}

The condition of the theorem is satisfied, as one ought to expect, when
$H_s$ satisfies Assumption~\ref{ass:H}. More generally, it suffices to
assume, for instance, that the $\infty$-jet of $H_s$ along
$\partial (S^1\times D^2$) is that of a function depending only on~$r$.
\subsection{Conjugation invariance}
In our proof of Theorem~\ref{thm:main} and the more general statement in
the preceding section, we have relied on an explicit coordinate description
of $S^3$ as a contact cut of $S^1\times D^2$. In this section we want
to discuss the conjugation invariance of the construction,
which amounts to saying that the specific coordinates are irrelevant.

There is one version of conjugation invariance that is completely
tautological. Let
\[ \varphi\co D^2\stackrel{\cong}{\longrightarrow}
\{0\}\times D^2\subset S^1\times D^2\]
be any embedding of $D^2$ with image $\{0\}\times D^2$, and assume that
\[ \psi\co\{0\}\times D^2\rightarrow\{0\}\times D^2\]
is the return map of a given Reeb flow. Then, with respect
to the embedding
\[ D^2\longrightarrow S^3=(S^1\times D^2)/\!\!\sim\]
given by $\varphi$, the return map is $\varphi^{-1}\circ\psi\circ\varphi$,
which preserves the area form $\varphi^*\omega$.

More restrictively, we may fix the disc $\{0\}\times D^2\subset
S^1\times D^2$ and ask whether it is possible to find a new contact
form on $S^1\times D^2$ whose return map on $\{0\}\times D^2$
is the conjugate of the previous one. The following proposition
gives the (still quite tautological) answer.

\begin{prop}
\label{prop:conjugate}
Let $\psi\co D^2\rightarrow D^2$ be an area-preserving diffeomorphism
generated by a $2\pi$-periodic Hamiltonian function $H_s$ with
$H_s|_{\partial (S^1\times D^2)}\equiv h\in\N$. Assume that
$\psi$ embeds into a Reeb flow on $S^3$ by the cut construction
described in Section~\ref{section:pseudorot}, starting from the
contact form $\alpha=H_s\,\rmd s+\lambda$. Let $\varphi
\co D^2\rightarrow D^2$ be a further area-preserving diffeomorphism.
Then the conjugate $\varphi^{-1}\circ\psi\circ\varphi$ likewise
embeds into a Reeb flow.
\end{prop}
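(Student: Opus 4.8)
The plan is to produce the required new contact form on $S^1\times D^2$ by pulling $\alpha=H_s\,\rmd s+\lambda$ back under the fibre-preserving diffeomorphism
\[ \Xi\co S^1\times D^2\longrightarrow S^1\times D^2,\qquad (s,x)\longmapsto(s,\varphi(x)). \]
First I would do the Hamiltonian bookkeeping. Since $\varphi$ is area-preserving, conjugating a Hamiltonian flow by $\varphi$ produces the Hamiltonian flow of the pulled-back function; hence the isotopy $\varphi^{-1}\circ\psi_s\circ\varphi$ is generated by $\tilde H_s:=H_s\circ\varphi$, with time $2\pi$ map the conjugate $\varphi^{-1}\circ\psi\circ\varphi$. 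This conjugate is again area-preserving, and $\tilde H_s\equiv h$ on $\partial(S^1\times D^2)$ because $\varphi$ preserves $\partial D^2$ and $H_s\equiv h$ there. Taking $\varphi^*\lambda$ as the (automatically valid) primitive of $\omega$, note that $\rmd(\varphi^*\lambda)=\varphi^*\omega=\omega$ and that the associated contact form is precisely
\[ \tilde\alpha:=\tilde H_s\,\rmd s+\varphi^*\lambda=\Xi^*\alpha. \]
Thus $\tilde\alpha$ is contact and $\Xi\co(S^1\times D^2,\tilde\alpha)\to(S^1\times D^2,\alpha)$ is a contactomorphism, so the contact condition is inherited and need not be re-checked.

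Next I would transfer the Reeb dynamics. By Lemma~\ref{lem:R} the Reeb flow of $\alpha$ has $\{0\}\times D^2$ as a surface of section with return map $\psi$. As $\Xi$ fixes the $s$-coordinate and restricts to $\varphi$ on $\{0\}\times D^2$, conjugating the Reeb flow by $\Xi$ shows that the Reeb flow of $\tilde\alpha$ has $\{0\}\times D^2$ as a surface of section with return map $\varphi^{-1}\circ\psi\circ\varphi$, and the area form it induces there is $\Xi^*(\rmd\alpha)|_{\{0\}\times D^2}=\varphi^*\omega=\omega$. Thus, with the standard disc and the standard identification fixed, the conjugate is realised as a return map preserving the correct area form; it remains only to close $(S^1\times D^2,\tilde\alpha)$ up to a Reeb flow on a closed manifold.

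The main obstacle is the cut, since $\Xi$ does \emph{not} descend to the $S^3$ built from $\alpha$: as $\varphi$ need not commute with the rotation $\partial_\theta$, the map $\Xi$ fails to carry the orbits of $Y=\partial_s-h\partial_\theta$ to themselves along the boundary. The remedy is to run the cut construction of Section~\ref{section:pseudorot} for $\tilde\alpha$ with respect to the conjugated data: the boundary circle action generated by $\Xi^*Y$ and the collar embedding $\Xi^{-1}\circ\Phi$, where $\Phi$ is the embedding in~(\ref{eqn:Phi}). With these choices $\Xi$ carries $\Xi^*Y$-orbits to $Y$-orbits, so it descends to a map $\overline{\Xi}$ of cut spaces which, read in the adapted charts $\Xi^{-1}\circ\Phi$ and $\Phi$ near the binding, is simply the identity. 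Hence $\overline{\Xi}$ is a diffeomorphism carrying the cut of $(S^1\times D^2,\tilde\alpha)$ onto $(S^3,\oalpha)$, and the smoothness of the induced $1$-form across the binding — the only delicate point — is inherited from the already-established $\alpha$-case rather than re-derived from the extension analysis of Section~\ref{subsection:extension}. The disc $\{0\}\times D^2$ then descends to a disc-like surface of section of a Reeb flow on $S^3$ whose return map is $\varphi^{-1}\circ\psi\circ\varphi$, which is the claim. (Alternatively, one may keep the primitive $\lambda$, realise the same return map via $\tilde H_s\,\rmd s+\lambda$, and invoke the primitive-independence of embeddability from Section~\ref{subsection:dF}.)
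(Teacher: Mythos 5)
Your proposal is correct and follows essentially the same route as the paper's own proof: generate the conjugate by $H_s\circ\varphi$, pull back $\alpha$ under the fibre-preserving extension of $\varphi$ to $S^1\times D^2$, and perform the cut with respect to the conjugated circle action $\varphi^*Y$ and the collar embedding $\varphi^{-1}\circ\Phi$, so that the extension over the binding reduces to that of $\Phi^*\alpha$, which holds by assumption. The extra bookkeeping you supply (the contact condition, the return map on $\{0\}\times D^2$, and the observation that $\varphi$ need not commute with $\partial_\theta$) is all consistent with, and only slightly more explicit than, the argument in the text.
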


\begin{proof}
The diffeomorphism $\varphi^{-1}\circ\psi\circ\varphi$ is generated
by the Hamiltonian function $H_s\circ\varphi$, see
\cite[Exercise~3.1.14]{mcsa17}. Regard $\varphi$ as
a diffeomorphism of $S^1\times D^2$. Then
\[ \varphi^*\alpha= (H_s\circ\varphi)\,\rmd s+\varphi^*\lambda \]
and
\[ \rmd(\varphi^*\alpha)=\rmd(H_s\circ\varphi)\wedge\rmd s+\omega,\]
since $\varphi$ is area-preserving. With $R=\partial_s+X_s$ as
before, where $X_s$ is the Hamiltonian vector field $X_{H_s}$ of~$H_s$,
it follows that $\varphi^*R=\partial_s+X_{H_s\circ\varphi}$.
Incidentally, this provides a quick solution to the cited exercise.

We take the cut of $S^1\times D^2$ with respect to the
$S^1$-action by $\varphi^*Y$ on the boundary, and we
replace the embedding $\Phi$ of a pointed neighbourhood of the binding by
the composition $\varphi^{-1}\circ\Phi$. The pull-back of $\varphi^*\alpha$
under this embedding equals $\Phi^*\alpha$, which extends by assumption.
\end{proof}
\subsection{The choice of primitive}
\label{subsection:dF}
In some sense, Proposition~\ref{prop:conjugate} is not entirely satisfactory,
since it really talks about the embeddability not of $\psi$, but of
the pair $(\psi,\lambda)$. It says that the embeddability of
$(\psi,\lambda)$ is equivalent to that of
\[ (\varphi^{-1}\circ\psi\circ\varphi, \varphi^*\lambda),\]
and this just amounts to a global change of coordinates.

We want to show that, at least up to
$C^2$-differentiability of the Hamiltonian function~$H_s$, the question
whether $\psi$ embeds is independent of the choice of primitive for the
area form~$\omega$.

A general primitive of $\omega$ is of the form $\lambda+\rmd F$,
with $F\co D^2\rightarrow\R$ a smooth function. We assume that,
possibly after adding a large integer to~$H_s$, the $1$-form
\[ \alpha_{F}:=H_s\,\rmd s+\lambda+\rmd F\]
is a contact form, that is, it satisfies (\ref{eqn:contact})
with $\lambda$ replaced by $\lambda+\rmd F$.
We want the new contact form $\alpha_F$ to be invariant under the
$S^1$-action on $\partial (S^1\times D^2)$
generated by $Y=\partial_s-h\partial_{\theta}$.
Near $r=1$ we have
\[ \rmd F=\frac{\partial F}{\partial r}\,\rmd r+
\frac{\partial F}{\partial\theta}\,\rmd\theta,\]
so the invariance requirement, cf.\ Lemma~\ref{lem:invariant}, becomes
\[ 0=L_{Y}\rmd F|_{r=1}=\rmd\bigl(\rmd F(\partial_{\theta})\bigr)|_{r=1}.\]
This means that
\begin{equation}
\label{eqn:partialF}
\frac{\partial^2F}{\partial\theta^2}\Big|_{r=1}=0\;\;\;\text{and}\;\;\;
\frac{\partial^2 F}{\partial r\,\partial\theta}\Big|_{r=1}=0.
\end{equation}
The first condition forces $(\partial F/\partial\theta)|_{r=1}=0$.
Notice that $\alpha_F(Y)|_{r=1}=0$, so $\alpha_F$ descends to
a $1$-form on the quotient $S^3=(S^1\times D^2)/\!\!\sim$.

With the embedding $\Phi$ from (\ref{eqn:Phi}), we have
\[ \Phi^*\rmd F=-\frac{\partial F}{\partial r}(1-\rho^2,b-h\vartheta)\,
\rho\,\rmd\rho+\frac{\partial F}{\partial\theta}(1-\rho^2,b-h\vartheta)\cdot
(\rmd b-h\,\rmd\vartheta).\]

In $\Phi^*\alpha_F$ there are no other terms in $\rmd\rho$, so if
we want $\Phi^*\alpha_F$ to extend as a contact form over $\rho=0$, the
least we need to require is that
\begin{equation}
\label{eqn:tf2}
\tf(b,\rho,\vartheta):=\frac{\partial F}{\partial r}
(1-\rho^2,b-h\vartheta),\;\;\;(b,\rho,\vartheta)\in
B\times [0,\delta]\times S^1,
\end{equation}
is the lift of a smooth function $f\co B\times D^2_{\delta}\rightarrow\R$.
As before, we are going to suppress the $b$-coordinate.

With this requirement understood, we have the following proposition,
which says that the remaining terms in $\Phi^*\rmd F$
extend as a $C^2$-form over $\rho=0$. So we do not, up to $C^2$, gain more
flexibility in the conditions on $H_s$ by adding $\rmd F$ to
the primitive~$\lambda$.

\begin{prop}
The function
\[ (\rho,\vartheta)\longmapsto\frac{\partial F}{\partial\theta}
(1-\rho^2,b-h\vartheta)\]
on $[0,\delta]\times S^1$ (for any fixed $b\in B$) equals $\rho^2$ times the
lift of a $C^2$-function $D^2_{\delta}\rightarrow\R$.
\end{prop}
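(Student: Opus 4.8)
The plan is to exploit the fact that $\partial F/\partial\theta$ and $\partial F/\partial r$ are derivatives of one and the same function $F$, so that the radial derivative of the term we wish to control can be traded for an angular derivative of the term we have already assumed to be well behaved. Write $G:=\partial F/\partial\theta$ and $K:=\partial F/\partial r$, both regarded as smooth functions of $(r,\theta)$ near $r=1$, and suppress the fixed parameter~$b$. The function under consideration is $W(\rho,\vartheta):=G(1-\rho^2,b-h\vartheta)$, while by hypothesis its companion $\tf(\rho,\vartheta)=K(1-\rho^2,b-h\vartheta)$ is the lift of a smooth function $f$ on the disc. Equality of mixed partials gives $\partial G/\partial r=\partial K/\partial\theta$, and the chain rule then yields the identity
\[ \frac{\partial W}{\partial\rho}=\frac{2\rho}{h}\,\frac{\partial\tf}{\partial\vartheta}. \]

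First I would integrate this in~$\rho$. Since the text has already established $(\partial F/\partial\theta)|_{r=1}=0$, we have $W(0,\vartheta)=0$, whence
\[ W(\rho,\vartheta)=\frac{2}{h}\int_0^\rho\rho'\,\frac{\partial\tf}{\partial\vartheta}(\rho',\vartheta)\,\rmd\rho'=\frac{2}{h}\,\frac{\partial}{\partial\vartheta}\int_0^\rho\rho'\,\tf(\rho',\vartheta)\,\rmd\rho', \]
differentiation under the integral sign being justified by the smoothness of~$\tf$. Substituting $\rho'=t\rho$ turns the inner integral into $\rho^2 R(\rho,\vartheta)$ with $R(\rho,\vartheta):=\int_0^1 t\,\tf(t\rho,\vartheta)\,\rmd t$. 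The key observation is that $R$ descends to a smooth function on the disc: since $\tf$ is the lift of~$f$, one has $R(\rho,\vartheta)=\int_0^1 t\,f(tz)\,\rmd t$ with $z=\rho\rme^{\rmi\vartheta}$, which is manifestly a function of $z$ alone and is smooth in $z$ by differentiation under the integral sign. Denote by $\mathcal R$ the resulting smooth function on $D^2_\delta$.

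It then remains to assemble the pieces. Because the radial factor $\rho^2=|z|^2$ is annihilated by the angular derivative $\partial/\partial\vartheta$, we obtain
\[ W=\frac{2}{h}\,\frac{\partial}{\partial\vartheta}\bigl(\rho^2 R\bigr)=\frac{2}{h}\,\rho^2\,\frac{\partial R}{\partial\vartheta}=\rho^2\cdot\Bigl(\frac{2}{h}\,\frac{\partial R}{\partial\vartheta}\Bigr), \]
and $\tfrac{2}{h}\,\partial R/\partial\vartheta$ is the lift of the smooth (hence $C^2$) function $\tfrac{2}{h}(x\mathcal R_y-y\mathcal R_x)$ on $D^2_\delta$. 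This exhibits $W$ as $\rho^2$ times the lift of a $C^2$-function, as required; in fact the argument delivers $C^\infty$. The step I expect to require the most care is the interchange of mixed partials together with the recognition that the $\rho$-weighted radial integral $\int_0^1 t\,\tf(t\rho,\vartheta)\,\rmd t$, after the rescaling $\rho'=t\rho$, is genuinely the lift of a function of the Cartesian variable~$z$ rather than merely a smooth function of $(\rho,\vartheta)$; this is precisely what upgrades smoothness in polar coordinates to smoothness on the disc.
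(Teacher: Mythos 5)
Your argument is correct, and it takes a genuinely different route from the paper's. Both proofs pivot on the same key relation, namely that equality of mixed partials lets one trade the radial derivative of $\partial F/\partial\theta$ for the angular derivative of the function $\tf$ in (\ref{eqn:tf2}) that is assumed to be the lift of a smooth~$f$ (your identity $\partial W/\partial\rho=\tfrac{2\rho}{h}\,\partial\tf/\partial\vartheta$ is exactly the paper's observation that $\tk=-\tf_{\vartheta}/h$ is the lift of a smooth function). From there the paths diverge: the paper first invokes the Morse/Hadamard lemma together with (\ref{eqn:partialF}) to factor $\partial F/\partial\theta=(r-1)^2G(r,\theta)$, and then solves the algebraic identity $\tk=-2\tg+\rho^4\,G_r(1-\rho^2,b-h\vartheta)$ for $\tg$, controlling the remainder term by the general polar-coordinate estimates of Lemma~\ref{lem:u} and Section~\ref{subsection:higher}; it is precisely this last step that caps the conclusion at~$C^2$. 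You instead integrate the radial identity from $\rho=0$ (using $(\partial F/\partial\theta)|_{r=1}=0$, which the paper has indeed established), rescale, and recognise the quotient by $\rho^2$ as $\tfrac{2}{h}\,\partial_\vartheta\!\int_0^1 t\,f(tz)\,\rmd t$, which is visibly the lift of a smooth function of the Cartesian variable~$z$. This bypasses the Morse lemma entirely, needs only the two inputs $(\partial F/\partial\theta)|_{r=1}=0$ and the smoothness hypothesis on~$f$ (the second condition in (\ref{eqn:partialF}) is then automatic, since $\partial_\vartheta f$ vanishes at the origin), and yields the stronger conclusion that the quotient is $C^\infty$ rather than merely~$C^2$. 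The only caveats are cosmetic: your $G$ clashes with the paper's use of that letter, and you should say that $\rho^2$ is independent of $\vartheta$ rather than ``annihilated'' by $\partial_\vartheta$.
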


\begin{proof}
By (\ref{eqn:partialF}) and the lemma
of Morse~\cite[Lemma~1.2.3]{wall16} used earlier,
we can write
\begin{equation}
\label{eqn:G}
\frac{\partial F}{\partial\theta}(r,\theta)=(r-1)^2G(r,\theta)
\end{equation}
with a smooth function~$G$. Hence,
\[ \frac{\partial F}{\partial\theta}(1-\rho^2,b-h\vartheta)=
\rho^4G(1-\rho^2,b-h\vartheta).\]
We therefore need to show that
\[ \tg(\rho,\vartheta):=\rho^2G(1-\rho^2,b-h\vartheta),\;\;\;
(\rho,\vartheta)\in[0,\delta]\times S^1,\]
is the lift of a $C^2$-function $g\co D^2_{\delta}\rightarrow\R$.

The derivative $\partial\tf/\partial\vartheta$, where
$\tf$ is the function defined in~(\ref{eqn:tf2}), is the
lift of the smooth function $\partial f/\partial\vartheta$.
On the other hand, we can write this derivative upstairs as
\[ \frac{\partial\tf}{\partial\vartheta}(\rho,\vartheta)=
-h\,\frac{\partial^2F}{\partial\theta\,\partial r}
(1-\rho^2,b-h\vartheta).\]
In other words, the function
\[ \tk(\rho,\vartheta):=\frac{\partial^2F}{\partial\theta\,\partial r}
(1-\rho^2,b-h\vartheta) \]
is the lift of a smooth function $k\co D^2_{\delta}\rightarrow\R$.
But, by (\ref{eqn:G}),
\begin{eqnarray*}
\tk(\rho,\vartheta)
 & = & \frac{\partial^2F}{\partial r\,\partial\theta}
       (1-\rho^2,b-h\vartheta)\\
 & = & -2\rho^2G(1-\rho^2,b-h\vartheta)+\rho^4\frac{\partial G}{\partial r}
       (1-\rho^2,b-h\vartheta)\\
 & = & -2\tg(\rho,\vartheta)+\rho^4\frac{\partial G}{\partial r}
       (1-\rho^2,b-h\vartheta).
\end{eqnarray*}
The second summand in this last expression is the lift of a $C^2$-function
$D^2_{\delta}\rightarrow\R$ by the considerations in
Lemma~\ref{lem:C1} and Section~\ref{subsection:higher}.
It follows that $\tg$ is the lift of a $C^2$-function, as we wanted to
show.
\end{proof}
\section{Dynamical invariants}
\label{section:invariants}
In this section we compute some invariants of the Reeb flows
on $S^3$ constructed via the cut construction, viz., the
Conley--Zehnder indices of the periodic Reeb orbits, and
the self-linking number of the binding orbit.

Throughout this section we assume, as before, that we are dealing
with a contact form on $S^3=(S^1\times D^2)/\!\!\sim$
coming from a contact form $\alpha=H_s\,\rmd s+\lambda$
on $S^1\times D^2$, where $H_s$ satisfies the boundary
condition~(\ref{eqn:boundary}), and the quotient is taken with
respect to the $S^1$-action on $\partial (S^1\times D^2)$
defined by the flow of $Y=\partial_s-h\partial_{\theta}$.
Moreover, it is of course assumed that $H_s$ has been
chosen such that $\Phi^*\alpha$ in (\ref{eqn:Phi*})
extends as a contact form over $\rho=0$; for instance,
one may assume the sufficient condition described in
Section~\ref{subsection:sufficient}.

Additionally, we impose the condition $H_s>0$. Notice that
adding a large natural number to the Hamiltonian
does not change the vector field $R=\partial_s+X_s$,
so this merely leads to a reparametrisation of the Reeb orbits.
This assumption on $H_s$ simplifies the discussion of framings.
\subsection{Framings}
In this section we describe trivialisations of the contact
plane fields over $\Int(S^1\times D^2)$ and near the
binding orbit $B=\bigl(\partial(S^1\times D^2)\bigr)/S^1$.
The comparison of these two framings will allow us to compute
the dynamical invariants.

Over $\Int(S^1\times D^2)$, the contact structure $\ker\alpha$
is trivialised by the oriented frame
\[ \left\{\begin{array}{rcl}
\bfe_1 & = & H_s\partial_x+y\partial_s,\\
\bfe_2 & = & H_s\partial_y-x\partial_s.
\end{array}\right.\]
Write $\Phi^*\alpha$ from (\ref{eqn:Phi*}) as
\begin{equation}
\label{eqn:Phi*f}
\Phi^*\alpha=(1-\rho^2)^2\,\rmd b+f(b,\rho\rme^{\rmi\vartheta})
\cdot(u\,\rmd v-v\,\rmd u),
\end{equation}
where $u+\rmi v=\rho\rme^{\rmi\vartheta}$. By assumption, $f$ extends
smoothly to $\rho=0$.
We then see that the contact structure $\ker(\Phi^*\alpha)$
is trivialised by the oriented frame
\[ \left\{\begin{array}{rcl}
\bfe_1' & = & (1-\rho^2)^2\partial_u+fv\partial_b,\\[.5mm]
\bfe_2' & = & (1-\rho^2)^2\partial_v-fu\partial_b.
\end{array}\right.\]
Away from $r=0$ we have
\[ \left\{\begin{array}{rcl}
\partial_x & = & \cos\theta\,\partial_r-
                 \frac{\sin\theta}{r}\,\partial_{\theta},\\[.5mm]
\partial_y & = & \sin\theta\,\partial_r+
                 \frac{\cos\theta}{r}\,\partial_{\theta}.
\end{array}\right.\]
There are analogous expressions for $\partial_u,\partial_v$,
with $(r,\theta)$ replaced by $(\rho,\vartheta)$.

The differential $T\Phi$ of $\Phi$ in (\ref{eqn:Phi}) is given by
\[ \left\{\begin{array}{rcl}
T\Phi(\partial_b)           & = & \partial_{\theta},\\
T\Phi(\partial_{\rho})      & = & -2\sqrt{1-r}\,\partial_r,\\
T\Phi(\partial_{\vartheta}) & = & \partial_s-h\partial_{\theta}.
\end{array}\right.\]
It follows that
\begin{eqnarray*}
T\Phi(\bfe_1') & = & -r^2\Bigl(2\sqrt{1-r}\,\cos s\,\partial_r+
                     \frac{\sin s}{\sqrt{1-r}}\,
                     (\partial_s-h\partial_{\theta})\Bigr)\\
               &   & \mbox{}+f(\theta+hs,\sqrt{1-r}\,\rme^{\rmi s})
                     \sqrt{1-r}\,\sin s\,\partial_{\theta}
\end{eqnarray*}
and
\begin{eqnarray*}
T\Phi(\bfe_2') & = & -r^2\Bigl(2\sqrt{1-r}\,\sin s\,\partial_r-
                     \frac{\cos s}{\sqrt{1-r}}\,
                     (\partial_s-h\partial_{\theta})\Bigr)\\
               &   & \mbox{}-f(\theta+hs,\sqrt{1-r}\,\rme^{\rmi s})
                     \sqrt{1-r}\,\cos s\,\partial_{\theta}.
\end{eqnarray*}

One particular case of interest will be when
$C:=S^1\times\{0\}\subset S^1\times D^2$ is a periodic Reeb orbit.
Observe that the annulus
\[ \bigl\{(s,r\rme^{-\rmi hs})\co s\in S^1,\, 0\leq r\leq 1\bigr\}
\subset S^1\times D^2\]
descends to a disc $\Delta$ in $S^3=(S^1\times D^2)/\!\!\sim$ with
boundary $\partial\Delta=S^1\times\{0\}$. Along the Reeb orbit $C$,
the surface framing given by $\Delta$ defines a trivialisation
of $\ker\alpha|_{S^1\times\{0\}}$. In a neighbourhood
of $C$, the contact planes project isomorphically
onto the tangent planes to the $D^2$-factor. With respect to this
projection, the oriented frame of the contact structure defined by $\Delta$
is then given by $(\partial_r,\partial_{\theta})$ in a pointed
neighbourhood of $S^1\times\{0\}$.
\subsection{Conley--Zehnder indices}
\label{subsection:muCZ}
We can now compute the Conley--Zehnder indices $\muCZ$ of the periodic Reeb
orbits in some examples. Recall that a contact form on the $3$-sphere
is called \emph{dynamically convex} if every periodic Reeb orbit
has index $\muCZ\geq 3$ \cite[Definition~1.2]{hwz98}.
\subsubsection{Irrational ellipsoids}
We begin with the ellipsoids from Section~\ref{subsection:ellipsoids},
that is, we consider the Hamiltonian function $H(r\rme^{\rmi\theta})=
a_2r^2+a_0$. The condition $a_0>0$ is equivalent to the
contact condition~(\ref{eqn:contact-alt}). The condition $a_0+a_2=h\in\N$
guarantees that $H>0$. We assume $a_0\in\R^+\setminus\Q$. Then there
are precisely two periodic Reeb orbits: the binding orbit~$B$,
and the central orbit $C=S^1\times\{0\}\subset S^1\times D^2$.

The meridian $\{0\}\times \partial D^2$ of $S^1\times D^2$ may be
taken as a representative of the binding orbit~$B$, so
$B$ bounds the disc $\{0\}\times D^2$ in $S^1\times D^2$.
The vector field $T\Phi(\bfe_1')$, for $r$ near but different from~$1$,
and for $s=0$, takes the form
\[ T\Phi(\bfe_1')=-2r^2\sqrt{1-r}\,\partial_r.\]
This makes one positive twist with respect to the frame $(\bfe_1,\bfe_2)$
as we go once along the meridian.

Near $B$ the Reeb vector field equals $\partial_b+\partial_{\vartheta}/a_0$,
so as we go once along $B$, the Reeb flow defines a rotation through
an angle $2\pi/a_0$ with respect to the frame $(\bfe_1',\bfe_2')$.
Thus, with respect to the frame $(\bfe_1,\bfe_2)$, we have a rotation through
an angle $2\pi(1+\nicefrac{1}{a_0})$.

By the definition of the Conley--Zehnder index~$\muCZ$,
see \cite[Section~8.1]{hwz03} or \cite[Section~2.2]{hls15},
we have $\muCZ(B)=2n+1$, where $n\in\N$ is the natural number
determined by $1+\nicefrac{1}{a_0}\in (n,n+1)$.

Near $C$ the Reeb vector field equals $(\partial_s-a_2\partial_{\theta})/a_0$.
The normalisation with return time $2\pi$ is $\partial_s-a_2\partial_{\theta}$.
Thus, with respect to the frame $(\bfe_1,\bfe_2)$ we make $-a_2$ twists
as we go once along the central orbit. The frame defined by the
disc $\Delta$ makes $-h$ twists relative to $(\bfe_1,\bfe_2)$. It follows that
the Reeb flow rotates through an angle $2\pi(h-a_2)=2\pi a_0$
with respect to the surface framing. Finally, the frame of $\ker\alpha$
that extends over $\Delta$ is the one defined by $(\bfe_1',\bfe_2')$ near the
centre of the disc. Up to positive factors, the projection
of $T\Phi(\bfe_1')$ onto the tangent planes to the $D^2$-factor
is of the form
\[ -\cos s\,\partial_r+\sin s\,\partial_{\theta}.\]
As we go once along $C$, this makes one negative twist with respect to
the frame $(\partial_r,\partial_{\theta})$. It follows that
the Reeb flow makes $a_0+1$ twists relative to the frame
$(\bfe_1',\bfe_2')$ (or its image under $T\Phi$). This gives
$\muCZ(C)=2m+1$, where $m\in\N$ is determined by
$a_0+1\in(m,m+1)$.

We see that, no matter what choice we make for $a_0\in\R^+\setminus\Q$,
one of the periodic orbits $B,C$ has $\muCZ=3$; the other,
$\muCZ=2n+1$ with $n\geq 2$. For an earlier proof of this
well-known result see~\cite[Lemma~1.6]{hwz95}.
\subsubsection{Irrational pseudorotations}
\label{subsubsection:convex}
As we have seen in the proof of Lemma~\ref{lem:smooth},
the Hamiltonian function describing an irrational pseudorotation arises
as the $C^{\infty}$-limit of Hamiltonians $H_s^{\nu}$ which
near $\partial D^2$ are given by
\[ H_s^{\nu}(r\rme^{\rmi\theta})=h+\frac{p_{\nu}}{q_{\nu}}-
\frac{p_{\nu}}{q_{\nu}}r^2.\]
In fact, the conjugating diffeomorphisms $\varphi_{\nu}$ in the
Fayad--Katok construction equal the identity map also near $0\in D^2$,
so there we have the same description of~$H_s^{\nu}$.
These Hamiltonians give rise to
functions $f^{\nu}$ in the description (\ref{eqn:Phi*f}) of $\Phi^*\alpha$
of the form
\[ f^{\nu}(b,\rho\rme^{\rmi\vartheta})=\Bigl(h+\frac{p_{\nu}}{q_{\nu}}\Bigr)
\cdot(2-\rho^2).\]
It follows that the $\infty$-jet of the limit Hamiltonian
$H_s^{\infty}$ along the central orbit $C$ equals that of
\[ H(r\rme^{\rmi\theta})=h+a-ar^2,\]
and the $\infty$-jet of the extended contact form along $B$ equals that of
\[ (1-\rho^2)^2\,\rmd b+(h+a)(2-\rho^2)\rho^2\,\rmd\vartheta.\]
This is precisely the situation of the irrational ellipsoids with
$a_0=h+a$ and $a_2=-a$. Recall from Lemma~\ref{lem:h+a} that
$h+a>0$. Summarising our arguments, we have the
following result, which completes
the proof of Theorem~\ref{thm:main}.

\begin{prop}
\label{prop:muCZ}
The irrational pseudorotations of Fayad--Katok embed into a Reeb flow
on $S^3$ whose periodic orbits $B,C$ have Conley--Zehnder indices
\[ \muCZ(B)=2n+1,\;\;\text{where $n\in\N$ is determined by}\;\;
1+\frac{1}{h+a}\in (n,n+1),\]
and
\[ \muCZ(C)=2m+1,\;\;\text{where $m\in\N$ is determined by}\;\;
h+a+1\in (m,m+1).\]
In particular, the contact form defining this Reeb flow
is dynamically convex.\qed
\end{prop}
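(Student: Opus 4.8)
The plan is to assemble the statement from the explicit index computation already carried out for irrational ellipsoids, bridged by the $\infty$-jet comparison recorded just above. The embedding of a Fayad--Katok pseudorotation into a Reeb flow on $S^3$ is furnished by Theorem~\ref{thm:main} (equivalently, by the verification of Proposition~\ref{prop:smooth}), so the only remaining tasks are to determine the periodic Reeb orbits, to compute their Conley--Zehnder indices, and to deduce dynamical convexity.

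First I would enumerate the periodic orbits. The return map on the disc-like surface of section is $\psi|_{\Int(D^2)}$, and an irrational pseudorotation has $0$ as its sole periodic point; interior periodic Reeb orbits correspond to periodic points of this return map, so the only periodic orbits are the binding orbit $B$ and the central orbit $C=S^1\times\{0\}$ arising from the fixed point. Hence dynamical convexity reduces to checking $\muCZ\geq 3$ for these two orbits alone.

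The key reduction is that the Conley--Zehnder index of a periodic Reeb orbit is computed from the linearised Reeb flow along it, and this in turn depends only on a finite jet of the contact form in directions transverse to the orbit. As noted in Section~\ref{subsubsection:convex}, the $\infty$-jet of $H_s^{\infty}$ along $C$ coincides with that of the ellipsoid Hamiltonian $H(r\rme^{\rmi\theta})=h+a-ar^2$, while the $\infty$-jet of the extended contact form $\halpha$ along $B$ coincides with that of $(1-\rho^2)^2\,\rmd b+(h+a)(2-\rho^2)\rho^2\,\rmd\vartheta$. These are precisely the jets of the irrational ellipsoid with $a_0=h+a$ and $a_2=-a$, so the linearised Reeb flows, and therefore the indices, agree with the ellipsoid case computed earlier. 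Reading off those formulas with $a_0=h+a$ gives $\muCZ(B)=2n+1$ with $1+\tfrac{1}{h+a}\in(n,n+1)$ and $\muCZ(C)=2m+1$ with $h+a+1\in(m,m+1)$, as claimed.

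Finally, dynamical convexity. By Lemma~\ref{lem:h+a} we have $h+a>0$, and since $h$ is a large natural number we may take $h+a>1$; then $1+\tfrac{1}{h+a}\in(1,2)$ forces $n=1$, so $\muCZ(B)=3$, while $h+a+1>2$ forces $m\geq 2$, so $\muCZ(C)\geq 5$. Both orbits thus have odd index at least $3$, and the contact form is dynamically convex. I expect the jet-reduction step to be the main obstacle: one must be certain that $\infty$-jet agreement of the contact forms along each orbit genuinely forces equality of the Conley--Zehnder indices. This is reassuring because the linearised return map is determined by the $1$-jet of the Reeb vector field transverse to the orbit, hence by a finite jet of $\alpha$; the much stronger $\infty$-jet agreement established earlier is more than sufficient, and the framings constructed in the preceding section make the comparison with the ellipsoid computation direct.
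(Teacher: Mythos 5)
Your proposal is correct and follows essentially the same route as the paper: Section~\ref{subsubsection:convex} likewise observes that the $\infty$-jet of $H_s^{\infty}$ along $C$ and of the extended contact form along $B$ agree with those of the irrational ellipsoid with $a_0=h+a$, $a_2=-a$, and then reads off the indices from the ellipsoid computation, with dynamical convexity following since $h+a>1$ forces one orbit to have index $3$ and the other index at least $5$. Your explicit remarks on the enumeration of periodic orbits and on why finite-jet agreement suffices for the linearised flow are sound elaborations of steps the paper leaves implicit.
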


Similar computations can be performed for general Hamiltonian
functions $H_s$ on $D^2$ that give rise to a contact form on~$S^3$.
The considerations above suggest that the
the Conley--Zehnder indices of periodic Reeb orbits corresponding
to periodic points of the diffeomorphism $\psi$ defined by $H_s$
can be determined from the local behaviour of $H_s$ near the
periodic point in question.

\begin{rem}
For Reeb flows on the $3$-sphere with two periodic orbits forming
a Hopf link, Hryniewicz--Momin--Salom\~ao~\cite[Theorem~1.2]{hms15}
describe a non-resonance condition that forces the existence
of infinitely many periodic orbits. This condition is
formulated in terms of the so-called transverse rotation number of the
two given periodic orbits. Our argument leading to
Proposition~\ref{prop:muCZ} shows that in the situation of that
proposition the transverse rotation numbers are given by
$\rho_0=1+1/(h+a)$ and $\rho_1=1+h+a$, respectively. The numbers
$\theta_i=\rho_i-1$ defined in \cite[Theorem~1.2]{hms15}
then become $\theta_0=1/(h+a)$ and $\theta_1=h+a$. This means that
the vectors $(\theta_0,1)$ and $(1,\theta_1)$ in $\R^2$
are proportional to each other, which is precisely the
resonance condition that would have to be violated to guarantee
infinitely many periodic Reeb orbits.
\end{rem}
\subsection{The self-linking number}
A periodic Reeb orbit in a contact $3$-manifold $(M,\alpha)$
constitutes a transverse knot $K$ for the
contact structure $\xi=\ker\alpha$. When $K$ is homologically trivial in~$M$,
it bounds a Seifert surface~$\Sigma$, over which
the $2$-plane field $\xi$ is trivial. Choose a non-vanishing section $Z$
of $\xi|_{\Sigma}$, and push $K$ in the direction of $Z|_K$
to obtain a parallel copy $K'$ of~$K$.
The self-linking number $\ttsl(K,\Sigma)$ is then defined as
the linking number of $K$ and~$K'$, that is,
the oriented intersection number of $K'$ and~$\Sigma$,
see~\cite[Definition~3.5.28]{geig08}. When the Euler class
of $\xi$ vanishes, the self-linking number is independent of the choice
of Seifert surface, see~\cite[Proposition~3.5.30]{geig08}.
In that case, we write~$\ttsl(K)$ for the self-linking number.

Going back to the contact forms on $S^3$ found via a cut construction
on $S^1\times D^2$, the self-linking number $\ttsl(B)$ of the
binding orbit is defined. As Seifert surface we take the
meridional disc $\Delta_0:=\{0\}\times D^2\subset S^1\times D^2$ as before,
and the trivialisation of $\ker\alpha|_{\{0\}\times D^2}$ given by~$\bfe_1$.
Strictly speaking, we cannot push $B=\{0\}\times\partial D^2$ in the
direction of $\bfe_1$, but one can make sense of this as one
passes to the quotient $S^3=(S^1\times D^2)/\!\!\sim$, and we may as
well perform the homotopical computation in $S^1\times\R^2$.

The parallel knot $B'$ intersects the meridional disc $\Delta_0$
in a single point on the negative $x$-axis, since $\bfe_1$ is
a positive multiple of $\partial_x$ along the $x$-axis.
For $y>0$, $B'$ lies above the $\{s=0\}$-plane; for $y<0$, below.
It follows that the intersection point of $B'$ and $\Delta_0$ is
a negative one, that is, $\ttsl(B)=-1$.

This accords with \cite[Theorem~1.5]{hrsa11}. That theorem
establishes the conditions $\ttsl(P)=-1$ and $\muCZ(P)\geq 3$ as necessary
for a (simply covered) periodic Reeb orbit $P$ to bound a disc-like global
surface of section.  The general assumption there is that $P$ is an
unknotted periodic Reeb orbit in $S^3$ for a contact form
defining the standard contact structure.

\begin{rem}
Much of our discussion carries over to
contact structures on lens spaces, provided we start with
a Hamiltonian function $H_s$ on $S^1\times D^2$ invariant under rotations
of the $D^2$-factor about an angle $2\pi/p$. See~\cite{hls15}
for a dynamical characterisation of universally tight contact structures
on lens spaces.

Also, one may replace $S^1\times D^2$ by $S^1\times\Sigma$, where
$\Sigma$ is any compact surface with boundary. As diffeomorphisms
$\psi\co\Sigma\rightarrow\Sigma$ we may take any Hamiltonian diffeomorphism
whose generating Hamiltonian $H_s$ satisfies criteria as in
Section~\ref{subsection:sufficient}.
\end{rem}
\begin{ack}
We thank Barney Bramham for many useful conversations, especially concerning
the work of Fayad--Katok. We also thank Murat Sa\u{g}lam for
comments on an earlier version of this paper.
This research is part of a project
in the SFB/TRR 191 `Symplectic Structures in Geometry, Algebra and Dynamics',
funded by the DFG.
\end{ack}

\end{document}